\author{Yury Savateev
\\
\normalsize{\textit{National
Research University Higher School of Economics}}\\
\normalsize{\textit{yury.savateev@gmail.com}}\\
\and
Daniyar Shamkanov
\\ 
\normalsize{\textit{Steklov Mathematical Institute of the Russian Academy of Sciences}}\\
\normalsize{\textit{National
Research University Higher School of Economics}}\\
\normalsize{\textit{daniyar.shamkanov@gmail.com}}\\
}
\title{Non-Well-Founded Proofs for the Grzegorczyk Modal Logic}
\date{}
\newtheorem{thm}{Theorem}[section]
\newtheorem{prop}[thm]{Proposition}
\newtheorem{lem}[thm]{Lemma}
\theoremstyle{remark}
\begin{document}
\maketitle

\begin{abstract}
We present a sequent calculus for the Grzegorczyk modal logic $\mathsf{Grz}$ allowing cyclic and other non-well-founded proofs and obtain the cut-elimination theorem for it by constructing a continuous cut-elimination mapping acting on these proofs. As an application, we establish the Lyndon interpolation property for the logic $\mathsf{Grz}$ proof-theoretically.
\\\\
\textit{Keywords:} non-well-founded proofs, Grzegorczyk logic, cut-elimination, Lyndon interpolation, cyclic proofs.
\end{abstract}

\section{Introduction}
\label{s1}
A non-well-founded proof is usually defined as a possibly infinite tree of formulas (sequents) that is constructed according to inference rules of a proof system and, in addition, that satisfies a particular condition on infinite branches. A cyclic, or circular, proof can be defined as a finite pointed graph of formulas (sequents) which unraveling yields a non-well-founded proof. These proofs turn out to be an interesting alternative to traditional proofs for logics with inductive and co-inductive definitions, fixed-point operators and similar features. For example, proof systems allowing cyclic proofs can be defined for the modal $\mu$-calculus \cite{AfLe}, the Lambek calculus with iteration \cite{Kuz} and for Peano arithmetic \cite{Sim}. In the last case, these proofs can be understood as a formalization of the concept of proof by infinite descend.

Structural proof theory of deductive systems allowing cyclic and non-well-founded proofs seems to be underdeveloped. In \cite{ForSan}, J.~Fortier and L.~Santocanale considered the case of the $\mu$-calculus with additive connectives. They present a procedure eliminating all applications of the cut-rule from a cyclic proof and resulting an infinite tree of sequents. Though they don't show that this tree satisfies a guard condition on infinite branches, which is necessary for non-well-founded proofs in the $\mu$-calculus. Unfortunately, we don't know other syntactic cut-elimination results for systems with non-well-founded proofs.    
Here we present a sequent calculus for the Grzegorczyk modal logic allowing non-well-founded proofs and obtain the cut-elimination theorem for it. This article is an extended version of the conference paper \cite{SavSham}.

The Grzegorczyk modal logic $\mathsf{Grz}$ is a well-known modal logic \cite{Maks}, which can be characterized by reflexive partially ordered Kripke frames without infinite ascending chains. This logic is complete w.r.t. the arithmetical semantics, where the modal connective $\Box$ corresponds to the strong provability operator \textit{"... is true and provable"} in Peano arithmetic. There is a translation from $\mathsf{Grz}$ into the G\"{o}del-L\"{o}b provability logic $\mathsf{GL}$ such that 
$$\mathsf{Grz} \vdash A \Longleftrightarrow \mathsf{GL} \vdash A^\ast,$$ where $A^\ast$ is obtained from $A$ by replacing all subformulas of the form $\Box B$ by $B \wedge \Box B$.

Recently a new proof-theoretic presentation for the G\"{o}del-L\"{o}b provability logic $\mathsf{GL}$ in the form of a sequent calculus  allowing non-well-founded proofs was given in \cite{Sham, Iemhoff}. 
We wonder whether cyclic and, more generally, non-well-founded proofs can be fruitfully considered in the case of $\mathsf{Grz}$. 
We consider a sequent calculus allowing non-well-founded proofs for the Grzegorczyk modal logic and present the cut-elimination theorem for the given system.
In order to avoid nested co-inductive and inductive reasoning, we adopt an approach from denotational semantics of computer languages, where program types are interpreted as ultrametric spaces and fixed-point combinators are encoded using the Banach fixed-point theorem (see \cite{BaMa}, \cite{Esc}, \cite{Breu}). We consider the set of non-well-founded proofs of $\mathsf{Grz}$ and various sets of operations acting on theses proofs as ultrametric spaces and define our cut-elimination operator using the Prie\ss-Crampe fixed-point theorem (see \cite{PrCr}), which is a strengthening of the Banach's theorem. 
In this paper, we also establish that in our sequent system it is sufficient to consider only unravellings of cyclic proofs instead of arbitrary non-well-founded proofs in order to obtain all provable sequents. As an application of cut-elimination, we obtain the Lyndon interpolation property for the Grzegorczyk modal logic $\mathsf{Grz}$ proof-theoretically.


Recall that the Craig interpolation property for a logic $\mathsf{L}$ says that if $A$ implies $B$, then there is an interpolant, that is, a formula $I$ containing only common variables of $A$ and $B$ such that $A$ implies $I$ and $I$ implies $B$. The Lyndon interpolation property is a strengthening of the Craig one that also takes into consideration negative and positive occurrences of the shared propositional variables; that is, the variables occurring in $I$ positively (negatively) must also occur both in $A$ and $B$ positively (negatively).

Though the Grzegorczyk logic has the Lyndon interpolation property \cite{Maks2}, there were seemingly no syntactic proofs for this result.  
It is unclear how Lyndon interpolation can be obtained from previously introduced sequent systems for $\mathsf{Grz}$ \cite{Avron, BorGen, DyNe} by direct proof-theoretic arguments because these systems contain inference rules in which a polarity
change occurs under the passage from the principal formula in the conclusion to its immediate ancestors in the premise. In this article, we give a syntactic proof of Lyndon interpolation for the Grzegorczyk modal logic as an application of our cut-elimination theorem.
  
The paper is organised as follows. In Section \ref{SecPrel} we recall a standart sequent calculus for $\mathsf{Grz}$. In Section \ref{SecNWF} we introduce the proof system $\mathsf{Grz_{\infty}}$ that allows non-well-founded proofs and prove its equivalence to the standard one. In Section \ref{SecUlt} we recall basic notions of the theory of ultrametric spaces and consider several relevant examples. In Section \ref{SecAdm} we state admissability of several rules for our system that will be used later. In Section \ref{SecCut} we establish the cut elimination result for the system $\mathsf{Grz}_\infty$ syntactically. In \ref{SecLyn} we prove the Lyndon interpolation property for the logic $\mathsf{Grz}$. Finally, in Section \ref{SecCyc} we establish that every provable sequent of $\mathsf{Grz}_\infty$ has a cyclic proof.

\section{Preliminaries}
\label{SecPrel}
In this section we recall the Grzegorczyk modal logic $\mathsf{Grz}$ and define an ordinary sequent calculus for it. 

\textit{Formulas} of $\mathsf{Grz}$, denoted by $A$, $B$, $C$, are built up as follows:
$$ A ::= \bot \,\,|\,\, p \,\,|\,\, (A \to A) \,\,|\,\, \Box A \;, $$
where $p$ stands for atomic propositions. 
We treat other boolean connectives and the modal operator $\Diamond$ as abbreviations:
\begin{gather*}
\neg A := A\to \bot,\qquad\top := \neg \bot,\qquad A\wedge B := \neg (A\to \neg B),
\\
A\vee B := (\neg A\to B),\qquad\Diamond A := \neg\Box \neg A.
\end{gather*}


The Hilbert-style axiomatization of $\mathsf{Grz}$ is given by the following axioms and inference rules:

\textit{Axioms:}
\begin{itemize}
\item[(i)] Boolean tautologies;
\item[(ii)] $\Box (A \rightarrow B) \rightarrow (\Box A \rightarrow \Box B)$;
\item[(iii)] $\Box A \rightarrow \Box \Box A$;
\item[(iv)] $\Box A \rightarrow A$;
\item[(v)] $\Box(\Box(A \rightarrow \Box A) \rightarrow A) \rightarrow \Box A$.
\end{itemize}

\textit{Rules:} modus ponens, $A / \Box A$. \\


Now we define an ordinary sequent calculus for $\mathsf{Grz}$. A \textit{sequent} is an expression of the form $\Gamma \Rightarrow \Delta$, where $\Gamma$ and~$\Delta$ are finite multisets of formulas. For a multiset of formulas $\Gamma = A_1,\dotsc, A_n$, we define $\Box \Gamma$ to be $\Box A_1,\dotsc, \Box A_n$.


The system $\mathsf{Grz_{Seq}}$, which is a variant of the sequent calculus from \cite{Avron}, is defined by the following initial sequents and inference rules: 
\begin{gather*}
\AXC{ $\Gamma, A \Rightarrow A, \Delta $ ,}
\DisplayProof \qquad
\AXC{ $\Gamma , \bot \Rightarrow \Delta $ ,}
\DisplayProof\\\\
\AXC{$\Gamma , B \Rightarrow \Delta $}
\AXC{$\Gamma \Rightarrow A,\Delta $}
\LeftLabel{$\mathsf{\to_L}$}
\BIC{$\Gamma , A \to B \Rightarrow \Delta$}
\DisplayProof\;,\qquad
\AXC{$\Gamma , A \Rightarrow B, \Delta $}
\LeftLabel{$\mathsf{\to_R}$}
\UIC{$\Gamma \Rightarrow A \to B, \Delta$}
\DisplayProof\;,\\\\
\AXC{$\Gamma, B, \Box B \Rightarrow \Delta $}
\LeftLabel{$\mathsf{refl}$}
\UIC{$\Gamma , \Box B \Rightarrow \Delta$}
\DisplayProof\;,\qquad
\AXC{$ \Box\Pi, \Box(A\to\Box A) \Rightarrow A$}
\LeftLabel{$\mathsf{\Box_{Grz}}$}
\UIC{$\Gamma, \Box \Pi \Rightarrow \Box A ,\Delta $}
\DisplayProof \;.
\end{gather*}
\begin{center}
\textbf{Fig. 1.} The system $\mathsf{Grz_{Seq}}$
\end{center}

The cut rule has the form
\begin{gather*}
\AXC{$\Gamma\Rightarrow A,\Delta$}
\AXC{$\Gamma,A\Rightarrow\Delta$}
\LeftLabel{$\mathsf{cut}$}
\RightLabel{ ,}
\BIC{$\Gamma\Rightarrow\Delta$}
\DisplayProof
\end{gather*}

where $A$ is called the \emph{cut formula} of the given inference. 

\begin{lem} \label{prop}
$\mathsf{Grz_{Seq}} + \mathsf{cut}\vdash \Gamma\Rightarrow\Delta$ if and only if $\mathsf{Grz} \vdash \bigwedge\Gamma\to\bigvee\Delta $. 
\end{lem}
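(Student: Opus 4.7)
The plan is to prove both directions by the standard techniques that relate Hilbert-style and sequent-style presentations: induction on the height of a sequent derivation for the left-to-right direction, and derivation of each Hilbert axiom together with simulation of modus ponens and necessitation for the right-to-left direction.

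For the $(\Rightarrow)$ direction, I would proceed by induction on the $\mathsf{Grz_{Seq}} + \mathsf{cut}$ derivation of $\Gamma \Rightarrow \Delta$, showing at each step that $\mathsf{Grz} \vdash \bigwedge\Gamma \to \bigvee\Delta$ (with $\bigwedge\emptyset := \top$ and $\bigvee\emptyset := \bot$). Initial sequents and the propositional rules reduce to classical tautological reasoning; the cut rule is simulated by modus ponens; and the $\mathsf{refl}$ rule uses axiom (iv), $\Box B \to B$, to eliminate the duplicated premise. The only truly modal case is $\mathsf{\Box_{Grz}}$: from the induction hypothesis $\mathsf{Grz} \vdash \bigwedge \Box\Pi \wedge \Box(A \to \Box A) \to A$, I would apply necessitation and the normality axiom (ii) to obtain $\Box \bigwedge \Box\Pi \to \Box(\Box(A \to \Box A) \to A)$, then use axiom (iii) iterated over $\Pi$ to replace $\Box\bigwedge\Box\Pi$ by $\bigwedge\Box\Pi$, and finally chain with axiom (v) to conclude $\bigwedge\Box\Pi \to \Box A$; weakening with $\Gamma$ on the left and $\Delta$ on the right then gives the desired implication.

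For the $(\Leftarrow)$ direction I would verify that each Hilbert axiom admits a cut-free derivation and that the two Hilbert rules are admissible. Boolean tautologies are standard; axiom (iv) is immediate via $\mathsf{refl}$; axiom (ii) is a short propositional derivation followed by $\mathsf{refl}$ on each box formula; axiom (iii) comes from a single application of $\mathsf{\Box_{Grz}}$ with the appropriate $\Pi$. For axiom (v), I would take $\Pi = \{\Box(A \to \Box A) \to A\}$ in $\mathsf{\Box_{Grz}}$, so that the conclusion is exactly $\Box(\Box(A \to \Box A) \to A) \Rightarrow \Box A$; the premise $\Box(\Box(A \to \Box A) \to A), \Box(A \to \Box A) \Rightarrow A$ is then obtained by one application of $\mathsf{refl}$ followed by $\mathsf{\to_L}$, both of whose premises are initial sequents. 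Modus ponens is simulated by cut, and necessitation corresponds to $\mathsf{\Box_{Grz}}$ with $\Pi = \emptyset$.

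The main obstacle is clearly the modal axiom (v): one has to check that the rule $\mathsf{\Box_{Grz}}$ is neither too weak nor too strong to match it. The derivation of (v) in the sequent calculus must carefully exploit the fact that $\mathsf{\Box_{Grz}}$ reintroduces $\Box\Pi$ in the premise, which is exactly what lets $\mathsf{refl}$ peel off the outer box of $\Box(\Box(A \to \Box A) \to A)$; conversely, in the soundness argument one must juggle axioms (ii), (iii) and (v) in the right order so that the side contexts $\Gamma, \Delta$, which disappear in the premise, can be reinstated by weakening at the Hilbert level. Everything else is routine bookkeeping.
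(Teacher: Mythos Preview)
Your proposal is correct and follows exactly the standard route the paper has in mind; indeed the paper simply writes ``This lemma is completely standard, so we omit the proof,'' so your sketch is strictly more detailed than what appears there. Two tiny imprecisions worth fixing: for axiom~(ii) you also need one application of $\mathsf{\Box_{Grz}}$ (not just $\mathsf{refl}$ and propositional rules) to introduce $\Box B$ on the right, and for necessitation you should invoke weakening to pass from $\Rightarrow A$ to $\Box(A\to\Box A)\Rightarrow A$ before applying $\mathsf{\Box_{Grz}}$ with $\Pi=\emptyset$.
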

This lemma is completely standard, so we omit the proof.

\begin{thm}\label{cutelimgrz}
If $\mathsf{Grz_{Seq}} + \mathsf{cut}\vdash \Gamma\Rightarrow\Delta$, then $\mathsf{Grz_{Seq}} \vdash \Gamma\Rightarrow\Delta$. 

\end{thm}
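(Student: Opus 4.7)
The plan is to establish cut-elimination syntactically via the standard method of double induction, as done by Avron for closely related sequent calculi. Fix a proof $\pi$ of $\Gamma \Rightarrow \Delta$ in $\mathsf{Grz_{Seq}} + \mathsf{cut}$ and proceed by induction on a lexicographic measure $(n,h)$, where $n$ is the maximal complexity of cut formulas appearing in $\pi$ and $h$ is the sum of heights of cut inferences of maximal complexity. The goal is to reduce a topmost cut of maximal complexity, producing a proof whose measure is strictly smaller, and iterate. Before starting the main reductions I would prove admissibility of the structural rules (weakening and contraction) and the usual invertibility lemmas for the propositional rules, so that non-principal cuts commute downward as expected.

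Most reduction cases are routine: atomic, $\bot$, and $\to$ principal cuts reduce in the standard way, and commutative cases with $\mathsf{\to_L},\mathsf{\to_R},\mathsf{refl}$ are direct. The delicate case is a principal cut on a formula $\Box A$ where the left premise is obtained by $\mathsf{\Box_{Grz}}$ from $\Box\Pi, \Box(A \to \Box A) \Rightarrow A$ and the cut formula is then reused on the right. Here one cannot simply substitute subproofs, since the right premise has a richer context than the strictly modal one required by $\mathsf{\Box_{Grz}}$; instead I would reshape the proof by weakening the $\mathsf{\Box_{Grz}}$-premise with the needed hypotheses and then introducing an auxiliary cut on $\Box(A \to \Box A)$, which can be discharged using the characteristic shape of $\mathsf{\Box_{Grz}}$ itself. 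The crucial invariant is that the new cuts produced during this surgery are all on formulas of strictly smaller complexity than $\Box A$, so the induction goes through.

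The main obstacle is exactly this $\mathsf{\Box_{Grz}}$--cut interaction: because $\mathsf{\Box_{Grz}}$ both restricts its premise to a boxed context and injects the Grzegorczyk hypothesis $\Box(A \to \Box A)$, a naive Gentzen-style reduction fails and one has to design a tailored modal key case, which is the part most prone to error. To avoid this delicate syntactic surgery, an alternative plan is semantic: combine Lemma \ref{prop} with the known Kripke completeness of $\mathsf{Grz}$ over reflexive partially ordered frames without infinite ascending chains, and establish cut-free completeness of $\mathsf{Grz_{Seq}}$ by a terminating saturation procedure whose termination exploits precisely the Grzegorczyk condition. Since the rest of the paper develops an independent syntactic cut-elimination through $\mathsf{Grz}_\infty$, appealing to the semantic route here (or quoting Avron directly) is the most economical choice.
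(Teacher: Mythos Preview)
Your proposal takes a fundamentally different route from the paper. The paper does \emph{not} prove Theorem~\ref{cutelimgrz} by a direct double induction in $\mathsf{Grz_{Seq}}$; instead it obtains it as the composite of Theorem~\ref{seqtoinfcut}, Theorem~\ref{infcuttoinf}, and Theorem~\ref{inftoseq}: translate into $\mathsf{Grz}_\infty+\mathsf{cut}$, eliminate cuts there via the ultrametric fixed-point machinery of Sections~\ref{SecUlt}--\ref{SecCut}, and translate back. So even as a high-level plan your sketch diverges from the paper's argument.

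More importantly, your direct-induction plan contains a genuine gap at exactly the point you flag as delicate. You propose to handle the principal $\Box A$-cut against $\mathsf{\Box_{Grz}}$ by introducing an auxiliary cut on $\Box(A\to\Box A)$ and then assert that ``the new cuts produced during this surgery are all on formulas of strictly smaller complexity than $\Box A$''. This is false: $\Box(A\to\Box A)$ properly contains $\Box A$ as a subformula, so under any standard complexity measure it is strictly \emph{larger} than $\Box A$. Hence the lexicographic measure $(n,h)$ you set up does not decrease, and the induction does not go through as written. This is precisely why cut-elimination for $\mathsf{Grz}$ is nontrivial: the rule $\mathsf{\Box_{Grz}}$ manufactures a formula of higher complexity in its premise. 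Borga and Gentilini's syntactic proof \cite{BorGen} circumvents this with a more refined ordering that is not the naive subformula complexity; your sketch does not supply such an ordering, and without it the key case fails.

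Your semantic fallback (Kripke completeness plus a terminating cut-free saturation) is a legitimate alternative strategy and would yield the theorem, but it is a different kind of argument and you have not actually carried it out; moreover the paper's stated aim is a syntactic proof, which it achieves by the detour through $\mathsf{Grz}_\infty$ rather than by either of your routes.
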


A syntactic cut-elimination for $\mathsf{Grz}$ was obtained by M.~Borga and P.~Gentilini in \cite{BorGen}. In this paper, we will give another proof of this cut-elimination theorem in the next sections.




\section{Non-well-founded proofs}
\label{SecNWF}
In this section we introduce a sequent calculus for $\mathsf{Grz}$ allowing non-well-founded proofs and define two translations that connect ordinary and non-well-founded sequent systems. 

Inference rules and initial sequents of the sequent calculus $\mathsf{Grz_\infty}$ have the following form:
\begin{gather*}
\AXC{ $\Gamma, p \Rightarrow p, \Delta $ ,}
\DisplayProof\qquad
\AXC{ $\Gamma , \bot \Rightarrow  \Delta$ ,}
\DisplayProof \\\\
\AXC{$\Gamma , B \Rightarrow \Delta $}
\AXC{$\Gamma \Rightarrow A,\Delta $}
\LeftLabel{$\mathsf{\to_L}$}
\BIC{$\Gamma , A \to B \Rightarrow \Delta$}
\DisplayProof\;,\qquad
\AXC{$\Gamma , A \Rightarrow B, \Delta $}
\LeftLabel{$\mathsf{\to_R}$}
\UIC{$\Gamma \Rightarrow A \to B, \Delta$}
\DisplayProof\;,\\\\
\AXC{$\Gamma, B, \Box B \Rightarrow \Delta $}
\LeftLabel{$\mathsf{refl}$}
\UIC{$\Gamma , \Box B \Rightarrow \Delta$}
\DisplayProof\;,\qquad
\AXC{$\Gamma, \Box \Pi \Rightarrow A, \Delta$}
\AXC{$\Box \Pi \Rightarrow A$}
\LeftLabel{$\mathsf{\Box}$}
\BIC{$\Gamma, \Box \Pi \Rightarrow \Box A, \Delta$}
\DisplayProof \;.
\end{gather*}
\begin{center}
\textbf{Fig. 2.} The system $\mathsf{Grz}_\infty$
\end{center}

The system $\mathsf{Grz}_{\infty}+\mathsf{cut}$ is defined by adding the rule ($\mathsf{cut}$) to the system $\mathsf{Grz_\infty}$.
An \emph{$\infty$--proof} in $\mathsf{Grz}_\infty$ ($\mathsf{Grz}_{\infty}+\mathsf{cut}$) is a (possibly infinite) tree whose nodes are marked by
sequents and whose leaves are marked by initial sequents and that is constructed according to the rules of the sequent calculus. In addition, every infinite branch in an $\infty$--proof must pass through a right premise of the rule ($\Box$) infinitely many times. A sequent $\Gamma \Rightarrow \Delta$ is \emph{provable} in $\mathsf{Grz}_\infty$ ($\mathsf{Grz}_{\infty}+\mathsf{cut}$) if there is an $\infty$--proof in $\mathsf{Grz}_\infty$ ($\mathsf{Grz}_{\infty}+\mathsf{cut}$) with the root marked by $\Gamma \Rightarrow \Delta$.


The \emph{$n$-fragment} of an $\infty$--proof is a finite tree obtained from the $\infty$--proof by cutting every branch at the $n$th from the root right premise of the rule ($\Box$). The $1$-fragment of an $\infty$--proof is also called its \emph{main fragment}. We define the \emph{local height $\lvert \pi \rvert$ of an $\infty$--proof $\pi$} as the length of the longest branch in its main fragment. An $\infty$--proof only consisting of an initial sequent has height 0.

For instance, consider an $\infty$--proof of the sequent $\Box(\Box(p \rightarrow \Box p) \rightarrow p) \Rightarrow p$: 

\begin{gather*}
\AXC{\textsf{Ax}}
\noLine
\UIC{$ F, p\Rightarrow p$}
\AXC{\textsf{Ax}}
\noLine
\UIC{$ F,p\Rightarrow \Box p,p$}
\LeftLabel{$\mathsf{\to_R}$}
\UIC{$ F \Rightarrow p\to\Box p,p$}
\AXC{\textsf{Ax}}
\noLine
\UIC{$p, F \Rightarrow p$}
\AXC{$\vdots$}
\noLine
\UIC{$ F \Rightarrow p$} 
\LeftLabel{$\mathsf{\Box}$} 
\BIC{$p, F \Rightarrow \Box p$}
\LeftLabel{$\mathsf{}\to_R$}
\UIC{$ F \Rightarrow p\to\Box p$}
\LeftLabel{$\mathsf{\Box}$} 
\BIC{$ F \Rightarrow \Box(p\to \Box p),p$} 
\LeftLabel{$\mathsf{\to_L}$}
\BIC{$\Box(p \rightarrow \Box p) \rightarrow p, F \Rightarrow p$}
\LeftLabel{$\mathsf{refl}$}
\RightLabel{ ,} 
\UIC{$F \Rightarrow p $}
\DisplayProof
\end{gather*}
where $F=\Box(\Box(p \rightarrow \Box p) \rightarrow p) $. 
The local height of this $\infty$--proof equals to 4 and its main fragment has the form
\begin{gather*}
\AXC{\textsf{Ax}}
\noLine
\UIC{$ F, p\Rightarrow p$}
\AXC{\textsf{Ax}}
\noLine
\UIC{$ F,p\Rightarrow \Box p,p$}
\LeftLabel{$\mathsf{\to_R}$}
\UIC{$ F \Rightarrow p\to\Box p,p$}
\AXC{\qquad \qquad \qquad \qquad}
\LeftLabel{$\mathsf{\Box}$} 
\BIC{$ F \Rightarrow \Box(p\to \Box p),p$} 
\LeftLabel{$\mathsf{\to_L}$}
\BIC{$\Box(p \rightarrow \Box p) \rightarrow p, F \Rightarrow p$}
\LeftLabel{$\mathsf{refl}$}
\RightLabel{ .} 
\UIC{$F \Rightarrow p $}
\DisplayProof
\end{gather*}
We denote the set of all $\infty$-proofs in the system $\mathsf{Grz}_{\infty} +\mathsf{cut} $ by $\mathcal P$. 
For $\pi, \tau\in\mathcal P$, we write $\pi \sim_n \tau$ if $n$-fragments of these $\infty$-proofs are coincide. For any $\pi, \tau\in\mathcal P$, we also set $\pi \sim_0 \tau$.

Now we define two translations that connect ordinary and non-well-founded sequent calculi for $\mathsf{Grz}$. 

\begin{lem}\label{AtoA}
We have  $\mathsf{Grz}_{\infty} \vdash \Gamma,A\Rightarrow A,\Delta$ for any sequent $\Gamma \Rightarrow \Delta$ and any formula $A$.
\end{lem}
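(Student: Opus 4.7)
The plan is to proceed by induction on the structure of the formula $A$, producing in each case a finite $\infty$-proof (which vacuously satisfies the infinite-branch condition on right premises of $(\Box)$).

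For the base cases, if $A$ is a propositional variable $p$ then $\Gamma, p \Rightarrow p, \Delta$ is already an initial sequent; if $A = \bot$, then $\Gamma, \bot \Rightarrow \bot, \Delta$ is an instance of the $\bot$-axiom. For the implication case $A = B \to C$, I would first apply $(\to_R)$ to reduce the goal to $\Gamma, B \to C, B \Rightarrow C, \Delta$, then apply $(\to_L)$ on the displayed $B \to C$, producing the two premises $\Gamma, B, C \Rightarrow C, \Delta$ and $\Gamma, B \Rightarrow B, C, \Delta$. Each of these is an identity-shaped sequent on a strictly smaller formula ($C$ and $B$ respectively), so it is derivable by the induction hypothesis.

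The interesting case is $A = \Box B$. Here I would apply the $(\Box)$-rule with $\Box \Pi := \Box B$ (keeping $\Gamma, \Delta$ as side context), which reduces the goal $\Gamma, \Box B \Rightarrow \Box B, \Delta$ to the two premises
\begin{gather*}
\Gamma, \Box B \Rightarrow B, \Delta \qquad \text{and} \qquad \Box B \Rightarrow B.
\end{gather*}
To discharge the left premise, apply $(\mathsf{refl})$ to unpack $\Box B$, leaving $\Gamma, B, \Box B \Rightarrow B, \Delta$, which is an identity on the smaller formula $B$ and is therefore given by the induction hypothesis. The right premise is handled analogously: one application of $(\mathsf{refl})$ reduces it to $B, \Box B \Rightarrow B$, which again is an identity on $B$ and is provable by the induction hypothesis.

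There is no real obstacle here; the only point worth noting is that we must check that the resulting derivation is a legitimate $\infty$-proof. Since the construction is purely by structural induction on $A$, the tree produced is finite, so the global guardedness condition (infinite branches passing infinitely often through a right premise of $(\Box)$) holds trivially. Thus $\mathsf{Grz}_\infty \vdash \Gamma, A \Rightarrow A, \Delta$ for every $A$ and every context $\Gamma, \Delta$.
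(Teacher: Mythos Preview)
Your proof is correct and follows exactly the approach the paper indicates: the paper's proof consists of the single sentence ``Standard induction on the structure of $A$,'' and what you have written is precisely that induction spelled out in full. Nothing further is needed.
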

\begin{proof}
Standard induction on the structure of $A$.
\end{proof}

\begin{lem}\label{Grz-schema}
We have $\mathsf{Grz}_{\infty}\vdash\Box(\Box(A \rightarrow \Box A) \rightarrow A) \Rightarrow A$ for any formula $A$.
\end{lem}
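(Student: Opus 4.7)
Write $F := \Box(\Box(A \to \Box A) \to A)$. My plan is to construct a cyclic $\infty$-proof of $F \Rightarrow A$ whose unravelling loops back to the root through a right premise of the $\Box$ rule, so that the guard condition is automatically satisfied.

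Starting at $F \Rightarrow A$, I first apply $\mathsf{refl}$ to $F$, producing the premise $\Box(A \to \Box A) \to A,\, F \Rightarrow A$, and then $\to_L$. The left branch $F, A \Rightarrow A$ closes by Lemma~\ref{AtoA}. The right branch is $F \Rightarrow \Box(A \to \Box A),\, A$, on which I apply the rule $\Box$ taking $\Box\Pi := F$ (which is legitimate since $F$ is itself boxed). Its left premise $F \Rightarrow A \to \Box A,\, A$ reduces via $\to_R$ to $F, A \Rightarrow \Box A,\, A$, which closes again by Lemma~\ref{AtoA}. Its right premise is $F \Rightarrow A \to \Box A$, which by $\to_R$ becomes $F, A \Rightarrow \Box A$; applying $\Box$ once more with $\Box\Pi := F$ yields a left premise $F, A \Rightarrow A$ (Lemma~\ref{AtoA}) and a right premise $F \Rightarrow A$ --- precisely the root sequent.

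Unravelling this finite pointed graph gives a non-well-founded derivation in which every infinite branch is obtained by walking around the unique cycle, and therefore passes through a right premise of the $\Box$ rule at least twice per loop. Hence the guard condition is met and the tree is an $\infty$-proof in $\mathsf{Grz}_\infty$ of $F \Rightarrow A$, as required.

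The only step that requires any thought is the choice of $\Box\Pi := F$ at both $\Box$-inferences: this is what causes the inner right premise to reproduce the root verbatim and thereby close the loop. Verifying the guard condition is then immediate, and every other branch terminates by Lemma~\ref{AtoA}, so the construction carries over uniformly in $A$.
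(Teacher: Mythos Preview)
Your proof is correct and follows exactly the paper's approach: the paper gives precisely this $\infty$-proof as its running example for $F=\Box(\Box(p\to\Box p)\to p)$ and then proves the lemma by substituting $A$ for $p$ and invoking Lemma~\ref{AtoA} at each leaf that is no longer an initial sequent. Your write-up simply makes the construction explicit for general $A$ from the start, and your verification of the guard condition (two right $\Box$-premises per loop) matches the structure of the paper's example.
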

\begin{proof}
Consider an example of $\infty$--proof for the sequent $\Box(\Box(p \rightarrow \Box p) \rightarrow p) \Rightarrow p$ given above. We transform this example into an $\infty$--proof for $\Box(\Box(A \rightarrow \Box A) \rightarrow A) \Rightarrow A$ by replacing $p$ with $A$ and adding required $\infty$--proofs instead of initial sequents using Lemma \ref{AtoA}.  
\end{proof}

Recall that an inference rule is called admissible (in a given proof system) if, for any instance of the rule, the conclusion is provable whenever all premises are provable. 
\begin{lem}\label{weakening}
The rule
\begin{gather*}
\AXC{$\Gamma\Rightarrow\Delta$}
\LeftLabel{$\mathsf{weak}$}
\UIC{$\Pi,\Gamma\Rightarrow\Delta,\Sigma$}
\DisplayProof
\end{gather*}
is admissible in the systems $\mathsf{Grz_{Seq}} $ and $\mathsf{Grz}_{\infty} +\mathsf{cut}$.
\end{lem}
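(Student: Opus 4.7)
For the system $\mathsf{Grz_{Seq}}$ the argument is a completely standard induction on the height of a derivation of $\Gamma \Rightarrow \Delta$. In every rule except $\mathsf{\Box_{Grz}}$ the context $\Gamma, \Delta$ propagates unchanged from conclusion to premise(s), so we simply add $\Pi$ on the left and $\Sigma$ on the right everywhere and apply the induction hypothesis. The only case worth singling out is $\mathsf{\Box_{Grz}}$: there the premise $\Box\Pi', \Box(A\to\Box A) \Rightarrow A$ does not mention the context $\Gamma, \Delta$ of the conclusion, and the rule's schema allows arbitrary $\Gamma, \Delta$ on the left and right of the conclusion; hence we can simply reapply $\mathsf{\Box_{Grz}}$ with the enlarged side contexts $\Pi, \Gamma$ and $\Delta, \Sigma$ without touching the premise at all. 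Initial sequents remain initial after adding $\Pi$ and $\Sigma$, and the $\mathsf{cut}$ rule is treated by weakening both premises.

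For the system $\mathsf{Grz}_\infty + \mathsf{cut}$ induction on proof height is unavailable, so the plan is to transform an $\infty$-proof of $\Gamma \Rightarrow \Delta$ into an $\infty$-proof of $\Pi, \Gamma \Rightarrow \Delta, \Sigma$ by a local recipe applied at every node, defining the output coinductively. At each node of the input $\infty$-proof we inspect the last rule used and reapply the same rule, weakening each sequent that lies on a path still carrying the extra context. For the initial sequents, $\mathsf{\to_L}$, $\mathsf{\to_R}$, $\mathsf{refl}$, and $\mathsf{cut}$ the context is preserved upward, so we simply add $\Pi$ on the left and $\Sigma$ on the right to the entire subtree's relevant part. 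The crucial case is $\mathsf{\Box}$: writing the original inference as
\begin{gather*}
\AXC{$\Gamma, \Box \Pi_0 \Rightarrow A, \Delta$}
\AXC{$\Box \Pi_0 \Rightarrow A$}
\BIC{$\Gamma, \Box \Pi_0 \Rightarrow \Box A, \Delta$}
\DisplayProof
\end{gather*}
we weaken the conclusion to $\Pi, \Gamma, \Box\Pi_0 \Rightarrow \Box A, \Delta, \Sigma$ and apply $\mathsf{\Box}$ with weakened left premise $\Pi, \Gamma, \Box\Pi_0 \Rightarrow A, \Delta, \Sigma$ but the same right premise $\Box\Pi_0 \Rightarrow A$, which is valid because the right premise of $\mathsf{\Box}$ carries no side contexts at all. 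Thus weakening propagates only along left premises of $\mathsf{\Box}$ and along both branches of $\mathsf{cut}$, $\mathsf{\to_L}$, and so on; the subtree above the right premise of any $\mathsf{\Box}$ inference is copied verbatim.

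It remains to verify the global condition on infinite branches in the resulting tree. Any infinite branch $b'$ of the transformed proof factors as a (possibly empty) initial segment that stays in the weakened portion, followed by at most one transition into the unmodified subtree sitting above some right premise of $\mathsf{\Box}$. In the first case, $b'$ mirrors, node by node, an infinite branch $b$ of the original $\infty$-proof, and $b$ passes through right premises of $\mathsf{\Box}$ infinitely often by assumption; but every right premise encountered by $b$ is also a right premise encountered by $b'$, contradicting the assumption that $b'$ never leaves the weakened portion. So every infinite branch of the new proof eventually enters an unchanged subtree, where the condition on right premises of $\mathsf{\Box}$ is inherited directly from the original proof. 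I expect the main delicate point to be precisely this verification that the guard condition is preserved, since everything else reduces to the simple observation that the right premise of $\mathsf{\Box}$ admits no side context and so blocks the propagation of weakening.
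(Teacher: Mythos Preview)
Your proposal is correct. For $\mathsf{Grz_{Seq}}$ you match the paper exactly. For $\mathsf{Grz}_\infty + \mathsf{cut}$, the paper takes a more economical route than you do: it simply says ``standard induction on the local height of a proof''. This works because, as you yourself observe, the right premise of $(\Box)$ is copied verbatim and carries no side context, so the whole transformation lives inside the main fragment of $\pi$, which is finite; ordinary induction on $|\pi|$ then suffices, and the guard condition on infinite branches comes for free since everything above the first right premise of $(\Box)$ on any branch is left untouched. Your coinductive construction followed by a separate verification of the guard condition is correct and rests on the same key observation, but your remark that ``induction on proof height is unavailable'' overlooks precisely the local-height parameter the paper introduced for this purpose. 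Both arguments work; the paper's is a one-liner because local height is designed to turn exactly this kind of reasoning into an ordinary finite induction.
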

\begin{proof}
Standard induction on the structure (local height) of a proof of $\Gamma\Rightarrow\Delta$.
\end{proof}

\begin{thm}\label{seqtoinfcut}
If $\mathsf{Grz_{Seq}}+\mathsf{cut}\vdash\Gamma\Rightarrow\Delta$, then $\mathsf{Grz}_{\infty}+\mathsf{cut}\vdash\Gamma\Rightarrow\Delta$.
\end{thm}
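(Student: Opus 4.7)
The plan is to proceed by induction on the height of a derivation $d$ of $\Gamma \Rightarrow \Delta$ in $\mathsf{Grz_{Seq}} + \mathsf{cut}$, converting each rule application into a corresponding derivation in $\mathsf{Grz}_\infty + \mathsf{cut}$. Most rules will transfer immediately, so the only real work is to simulate the rule $\mathsf{\Box_{Grz}}$, whose shape differs from the two-premise $\Box$ rule of $\mathsf{Grz}_\infty$.

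For the base case, the axiom $\Gamma, \bot \Rightarrow \Delta$ is shared by both systems, and the general identity axiom $\Gamma, A \Rightarrow A, \Delta$ is handled by Lemma~\ref{AtoA}. For the inductive step, the rules $\mathsf{\to_L}$, $\mathsf{\to_R}$, $\mathsf{refl}$ and $\mathsf{cut}$ are literally present in $\mathsf{Grz}_\infty + \mathsf{cut}$, so one simply translates the premise(s) by the induction hypothesis and reapplies the rule.

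The main obstacle is the case of $\mathsf{\Box_{Grz}}$, where we must derive $\Gamma, \Box\Pi \Rightarrow \Box A, \Delta$ in $\mathsf{Grz}_\infty+\mathsf{cut}$ from a translated premise $\sigma \vdash \Box\Pi, \Box(A \to \Box A) \Rightarrow A$. The construction proceeds in several steps. First, apply $\mathsf{\to_R}$ to obtain $\Box\Pi \Rightarrow \Box(A \to \Box A) \to A$. Next, apply the $\mathsf{Grz}_\infty$ rule $\mathsf{\Box}$ using this sequent as \emph{both} the left and the right premise (taking the outer $\Gamma$ and $\Delta$ empty), yielding $\Box\Pi \Rightarrow \Box\bigl(\Box(A \to \Box A) \to A\bigr)$. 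By Lemma~\ref{Grz-schema} we have an $\infty$-proof of $\Box\bigl(\Box(A \to \Box A) \to A\bigr) \Rightarrow A$, so an application of $\mathsf{cut}$ with cut formula $\Box(\Box(A \to \Box A) \to A)$ delivers $\Box\Pi \Rightarrow A$. Finally, one more application of the $\mathsf{Grz}_\infty$ rule $\mathsf{\Box}$, with left premise $\Gamma, \Box\Pi \Rightarrow A, \Delta$ obtained by Lemma~\ref{weakening} and right premise $\Box\Pi \Rightarrow A$ just derived, yields $\Gamma, \Box\Pi \Rightarrow \Box A, \Delta$ as required.

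It remains to verify that the object produced by this construction is a genuine $\infty$-proof, i.e.\ that every infinite branch passes through a right premise of $\mathsf{\Box}$ infinitely often. This is immediate: the inductive construction builds a finite tree of new rule applications on top of finitely many pre-existing $\infty$-proofs (one per use of Lemma~\ref{AtoA} and one per use of Lemma~\ref{Grz-schema}), and every infinite branch of the resulting tree eventually enters one of these $\infty$-subproofs, each of which already satisfies the progress condition. Thus the construction gives a well-defined $\infty$-proof of $\Gamma \Rightarrow \Delta$ in $\mathsf{Grz}_\infty + \mathsf{cut}$.
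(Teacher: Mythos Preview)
Your proof is correct and follows essentially the same strategy as the paper's own argument: induction on the $\mathsf{Grz_{Seq}}+\mathsf{cut}$ derivation, simulating $\mathsf{\Box_{Grz}}$ by first deriving $\Box\Pi \Rightarrow \Box(\Box(A\to\Box A)\to A)$ via $\mathsf{\to_R}$ and $\mathsf{\Box}$, cutting against Lemma~\ref{Grz-schema}, and finishing with one more application of $\mathsf{\Box}$ together with weakening. One small point to tighten: the $\mathsf{cut}$ rule here is context-sharing, so before cutting $\Box\Pi \Rightarrow \Box G$ against $\Box G \Rightarrow A$ you must weaken both sides (via Lemma~\ref{weakening}) to $\Box\Pi \Rightarrow \Box G, A$ and $\Box\Pi, \Box G \Rightarrow A$ --- the paper does exactly this.
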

\begin{proof}
Assume $\pi$ is a proof of $\Gamma\Rightarrow\Delta$ in $\mathsf{Grz_{Seq}}+\mathsf{cut}$. By induction on the size of $\pi$ we prove $\mathsf{Grz}_{\infty}+\mathsf{cut}\vdash\Gamma\Rightarrow\Delta$. 

If $\Gamma \Rightarrow \Delta $ is an initial sequent of $\mathsf{Grz_{Seq}}+\mathsf{cut}$, then it is provable in $\mathsf{Grz}_{\infty}+\mathsf{cut}$ by Lemma \ref{AtoA}.
Otherwise, consider the last application of an inference rule in $\pi$. 

The only non-trivial case is when the proof $\pi$ has the form 
\begin{gather*}
\AXC{$\pi^\prime$}
\noLine
\UIC{$\Box \Pi,\Box(A\to\Box A)\Rightarrow A$}
\LeftLabel{$\mathsf{\Box_{Grz}}$}
\RightLabel{ ,}
\UIC{$\Sigma,\Box\Pi\Rightarrow \Box A, \Lambda$}
\DisplayProof
\end{gather*} 
where $\Sigma,\Box\Pi = \Gamma$ and $\Box A, \Lambda = \Delta$. By the induction hypothesis there is an $\infty$--proof $\xi$ of $\Box \Pi,\Box(A\to\Box A)\Rightarrow A$ in $\mathsf{Grz}_{\infty}+\mathsf{cut}$.

We have the following $\infty$--proof $\lambda$ of $\Box \Pi\Rightarrow A$ in $\mathsf{Grz}_{\infty}+\mathsf{cut} $:
\begin{gather*}
\AXC{$\xi^{\prime}$}
\noLine
\UIC{$\Box \Pi,\Box(A\to\Box A)\Rightarrow A,A$}
\LeftLabel{$\mathsf{\to_R}$}
\UIC{$\Box \Pi\Rightarrow G,A$}
\AXC{$\xi$}
\noLine
\UIC{$\Box \Pi,\Box(A\to\Box A)\Rightarrow A$}
\LeftLabel{$\mathsf{\to_R}$}
\UIC{$\Box \Pi\Rightarrow G$}
\LeftLabel{$\Box$}
\BIC{$\Box \Pi\Rightarrow\Box G,A$}
\AXC{$\theta$}
\noLine
\UIC{$\Box\Pi,\Box G \Rightarrow A$}
\LeftLabel{$\mathsf{cut}$}
\RightLabel{ ,}
\BIC{$\Box\Pi\Rightarrow A$}
\DisplayProof
\end{gather*}
where $G= \Box(A \rightarrow \Box A) \rightarrow A$, $\xi^{\prime}$ is an $\infty$--proof of $\Box \Pi,\Box(A\to\Box A)\Rightarrow A,A$ obtained from $\xi$ by Lemma \ref{weakening} and $\theta$ is an $\infty$--proof of $\Box\Pi,\Box G \Rightarrow A$, which exists by Lemma \ref{Grz-schema} and Lemma \ref{weakening}.

The required $\infty$--proof for $\Sigma,\Box\Pi\Rightarrow \Box A, \Delta$ has the form
\begin{gather*}
\AXC{$\lambda'$}
\noLine
\UIC{$\Sigma,\Box\Pi\Rightarrow A,\Lambda$}
\AXC{$\lambda$}
\noLine
\UIC{$\Box\Pi\Rightarrow A$}
\LeftLabel{$\Box$}
\RightLabel{ ,}
\BIC{$\Sigma,\Box\Pi\Rightarrow \Box A, \Lambda$}
\DisplayProof
\end{gather*}
where $\lambda'$ is an $\infty$-proof of the sequent $\Sigma,\Box\Pi\Rightarrow A,\Lambda$ obtained from the $\infty$-proof $\lambda$ by Lemma \ref{weakening}.

The cases of other inference rules being last in $\pi$ are straightforward, so we omit them.

\end{proof}


For a sequent $\Gamma\Rightarrow\Delta$, let $Sub(\Gamma\Rightarrow\Delta)$ be the set of all subformulas of the formulas from $\Gamma \cup\Delta$.
For a finite set of formulas $\Lambda$, let $\Lambda^\ast$ be the set $\{\Box(A\to\Box A)\mid A\in\Lambda\}$.

\begin{lem} \label{translation}
If $\mathsf{Grz_\infty}\vdash \Gamma\Rightarrow\Delta$, then $\mathsf{Grz_{Seq}} \vdash \Lambda^\ast,\Gamma\Rightarrow\Delta$ for any finite set of formulas $\Lambda$.
\end{lem}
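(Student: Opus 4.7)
The plan is to induct on the lexicographically ordered pair $\bigl(|\mathrm{Sub}(\Gamma\Rightarrow\Delta)\setminus\Lambda|,\,|\pi|\bigr)$, where $\pi$ is a fixed $\infty$-proof of $\Gamma\Rightarrow\Delta$ in $\mathsf{Grz_\infty}$. Note that $|\pi|$ is finite: the main fragment of $\pi$ is finitely branching and, by the condition on infinite branches, contains no infinite path, hence is finite by K\"{o}nig's lemma. Moreover, since $\mathsf{Grz_\infty}$ is cut-free, every rule read upwards introduces only subformulas of its conclusion, and so every sequent occurring in $\pi$ consists of subformulas of its root; I use this subformula property freely below. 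The universal quantifier over $\Lambda$ in the statement is essential, since $\Lambda$ will grow each time the induction crosses a right premise of the $\Box$-rule.

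For the base case $|\pi|=0$, the sequent $\Gamma\Rightarrow\Delta$ is an initial sequent of $\mathsf{Grz_\infty}$, and then $\Lambda^*,\Gamma\Rightarrow\Delta$ is an initial sequent of $\mathsf{Grz_{Seq}}$ as well. In the inductive step, if the last rule in $\pi$ is $\mathsf{\to_L}$, $\mathsf{\to_R}$, or $\mathsf{refl}$, I apply the IH to the immediate subtrees with the same $\Lambda$ and replay the rule in $\mathsf{Grz_{Seq}}$: the local height strictly decreases while $\mathrm{Sub}$ of the endsequent does not grow.

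The key case is when the last rule is $\Box$, with root $\Sigma,\Box\Pi\Rightarrow\Box A,\Theta$, left premise subtree $\pi_l$ of $\Sigma,\Box\Pi\Rightarrow A,\Theta$, and right premise subtree $\pi_r$ of $\Box\Pi\Rightarrow A$. I split on whether $A\in\Lambda$. If $A\notin\Lambda$, then since $A\in\mathrm{Sub}(\Gamma\Rightarrow\Delta)$ the first component of the measure strictly decreases under the augmentation $\Lambda\mapsto\Lambda\cup\{A\}$, so I may apply the IH to $\pi_r$ with $\Lambda\cup\{A\}$ and obtain a $\mathsf{Grz_{Seq}}$-derivation of $\Lambda^*,\Box(A\to\Box A),\Box\Pi\Rightarrow A$; one application of $\mathsf{\Box_{Grz}}$, treating $\Lambda^*\cup\Box\Pi$ as the boxed antecedent and $\Sigma,\Theta$ as the discarded side context, then yields $\Lambda^*,\Sigma,\Box\Pi\Rightarrow\Box A,\Theta$. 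If instead $A\in\Lambda$, then $\Box(A\to\Box A)$ is already in $\Lambda^*$; I apply the IH to $\pi_l$ with the same $\Lambda$ (its local height is smaller) to get $\Lambda^*,\Sigma,\Box\Pi\Rightarrow A,\Theta$, weaken it (Lemma \ref{weakening}) to $\Lambda^*,\Sigma,\Box\Pi\Rightarrow A,\Box A,\Theta$, combine it with the axiom $\Lambda^*,\Sigma,\Box\Pi,\Box A\Rightarrow\Box A,\Theta$ via $\mathsf{\to_L}$ applied to $A\to\Box A$, and finally apply $\mathsf{refl}$ to the $\Box(A\to\Box A)$ already present in $\Lambda^*$ to close the derivation.

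The main obstacle is choosing the right inductive measure. The subtree $\pi_r$ can be arbitrarily deep, so $|\pi|$ alone cannot drive the induction across the $\Box$-rule; what makes the $A\notin\Lambda$ subcase terminate is the first component, which records how much of the finite pool of subformulas of the original endsequent has not yet been absorbed into $\Lambda$. Once this measure is in place, the rest is routine case analysis.
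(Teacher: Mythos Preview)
Your proof is correct and follows essentially the same approach as the paper: induction on the lexicographic pair $\bigl(|\mathrm{Sub}(\Gamma\Rightarrow\Delta)\setminus\Lambda|,\,|\pi|\bigr)$, with the same case split on whether $A\in\Lambda$ in the $\Box$-case and the same constructions in each subcase (using $\mathsf{\Box_{Grz}}$ on the right premise when $A\notin\Lambda$, and the $\mathsf{refl}$/$\mathsf{\to_L}$ trick on the left premise when $A\in\Lambda$). The paper phrases this as ``induction on the number of elements in $Sub(\Gamma\Rightarrow\Delta)\setminus\Lambda$ with a subinduction on $|\pi|$,'' which is the same thing.
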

\begin{proof}
Assume $\pi$ is an $\infty$--proof of the sequent $\Gamma\Rightarrow\Delta$ in $\mathsf{Grz}_\infty$ and $\Lambda$ is a finite set of formulas.
By induction on the number of elements in the finite set $Sub(\Gamma\Rightarrow\Delta)\backslash \Lambda$ with a subinduction on $\lvert \pi \rvert$, we prove $\mathsf{Grz_{Seq}} \vdash \Lambda^\ast,\Gamma\Rightarrow\Delta$. 


If $\lvert \pi \rvert=0$, then $\Gamma\Rightarrow\Delta$ is an initial sequent. We see that the sequent $\Lambda^\ast,\Gamma\Rightarrow\Delta$ is an initial sequent and it is provable in $\mathsf{Grz_{Seq}}$.
Otherwise, consider the last application of an inference rule in $\pi$. 

Case 1. Suppose that $\pi$ has the form
\begin{gather*}
\AXC{$\pi^\prime$}
\noLine
\UIC{$\Gamma,A\Rightarrow B,\Sigma$}
\LeftLabel{$\mathsf{\to_R}$}
\RightLabel{ ,}
\UIC{$\Gamma\Rightarrow A\to B,\Sigma$}
\DisplayProof
\end{gather*}
where $A\to B,\Sigma = \Delta$.
Notice that $\lvert \pi^\prime \rvert < \lvert \pi \rvert $. By the induction hypothesis for $\pi^\prime$ and $\Lambda$, the sequent $\Lambda^\ast,\Gamma,A\Rightarrow B,\Sigma$ is provable in  $\mathsf{Grz_{Seq}}$. 
Applying the rule ($\mathsf{\to_R}$) to it, we obtain that the sequent $\Lambda^\ast,\Gamma\Rightarrow\Delta$ is provable in $\mathsf{Grz_{Seq}}$.

Case 2. Suppose that $\pi$ has the form
\begin{gather*}
\AXC{$\pi^\prime$}
\noLine
\UIC{$\Sigma, B\Rightarrow \Delta$}
\AXC{$\pi^{\prime\prime}$}
\noLine
\UIC{$\Sigma \Rightarrow A,\Delta$}
\LeftLabel{$\mathsf{\to_L}$}
\RightLabel{ ,}
\BIC{$\Sigma, A\to B\Rightarrow \Delta$}
\DisplayProof
\end{gather*}
where $\Sigma, A\to B = \Gamma$. We see that $\lvert \pi^\prime \rvert < \lvert \pi \rvert $. By the induction hypothesis for $\pi^\prime$ and $\Lambda$, the sequent $\Lambda^\ast,\Sigma, B\Rightarrow \Delta$ is provable in  $\mathsf{Grz_{Seq}}$. Analogously, we have $\mathsf{Grz_{Seq}} \vdash \Lambda^\ast,\Sigma \Rightarrow A,\Delta$. Applying the rule ($\mathsf{\to_L}$), we obtain that the sequent $\Lambda^\ast,\Sigma, A\to B \Rightarrow\Delta$ is provable in $\mathsf{Grz_{Seq}}$.

Case 3. Suppose that $\pi$ has the form
\begin{gather*}
\AXC{$\pi^\prime$}
\noLine
\UIC{$\Sigma,A,\Box A\Rightarrow \Delta$}
\LeftLabel{$\mathsf{refl}$}
\RightLabel{ ,}
\UIC{$\Sigma,\Box A\Rightarrow \Delta$}
\DisplayProof
\end{gather*}
where $\Sigma, \Box A = \Gamma$. We see that $\lvert \pi^\prime \rvert < \lvert \pi \rvert $. By the induction hypothesis for $\pi^\prime$ and $\Lambda$, the sequent $\Lambda^\ast,\Sigma, A, \Box A\Rightarrow \Delta$ is provable in  $\mathsf{Grz_{Seq}}$. Applying the rule ($\mathsf{refl}$), we obtain $\mathsf{Grz_{Seq}} \vdash \Lambda^\ast,\Sigma, \Box A\Rightarrow\Delta$. 

Case 4. Suppose that $\pi$ has the form
\begin{gather*}
\AXC{$\pi^\prime$}
\noLine
\UIC{$\Phi, \Box \Pi \Rightarrow A, \Sigma$}
\AXC{$\pi^{\prime\prime}$}
\noLine
\UIC{$\Box \Pi \Rightarrow A$}
\LeftLabel{$\mathsf{\Box}$}
\RightLabel{ ,}
\BIC{$\Phi, \Box \Pi \Rightarrow \Box A, \Sigma$}
\DisplayProof
\end{gather*}
where $\Phi, \Box \Pi = \Gamma$ and $\Box A, \Sigma =\Delta$.

Subcase 4.1: the formula $A$ belongs to $\Lambda$. We see that $\lvert \pi^\prime \rvert < \lvert \pi \rvert $. By the induction hypothesis for $\pi^\prime$ and $\Lambda$, the sequent $\Lambda^\ast,\Phi, \Box \Pi \Rightarrow A, \Sigma$ is provable in  $\mathsf{Grz_{Seq}}$. 
Then we see
\begin{gather*}
\AXC{$\mathsf{Ax}$}
\noLine
\UIC{$\Lambda^\ast,\Box A,\Phi, \Box \Pi \Rightarrow \Box A, \Sigma$}
\AXC{$\Lambda^\ast,\Phi, \Box \Pi \Rightarrow A,  \Sigma$}
\LeftLabel{$\mathsf{weak}$}
\UIC{$\Lambda^\ast,\Phi, \Box \Pi \Rightarrow A,  \Box A,\Sigma$}
\LeftLabel{$\mathsf{\to_L}$}
\BIC{$(\Lambda\backslash\{A\})^\ast,A\to\Box A,\Box(A\to\Box A),\Phi, \Box \Pi \Rightarrow \Box A, \Sigma$}
\LeftLabel{$\mathsf{refl}$}
\RightLabel{ ,}
\UIC{$(\Lambda\backslash\{A\})^\ast,\Box(A\to\Box A),\Phi, \Box \Pi \Rightarrow \Box A, \Sigma$}
\DisplayProof
\end{gather*}
where the rule ($\mathsf{weak}$) is admissible by Lemma \ref{weakening}.

Subcase 4.2: the formula $A$ doesn't belong to $\Lambda$. We have that the number of elements in $Sub(\Box\Pi\Rightarrow A)\backslash(\Lambda\cup \{A\})$ is strictly less than the number of elements in $Sub(\Phi, \Box \Pi \Rightarrow \Box A, \Sigma)\backslash\Lambda$. Therefore, by the induction hypothesis for $\pi^{\prime\prime}$ and $\Lambda\cup \{A\}$, the sequent $\Lambda^\ast,\Box(A\to\Box A),\Box \Pi \Rightarrow A$ is provable in  $\mathsf{Grz_{Seq}}$. Then we have
\begin{gather*}
\AXC{$\Lambda^\ast,\Box(A\to\Box A),\Box \Pi \Rightarrow A$}
\LeftLabel{$\mathsf{\Box_{Grz}}$}
\RightLabel{ .}
\UIC{$\Lambda^\ast,\Phi, \Box \Pi \Rightarrow \Box A, \Sigma$}
\DisplayProof
\end{gather*}
\end{proof}
From Lemma \ref{translation} we immediately obtain the following theorem.
\begin{thm}\label{inftoseq}
If $\mathsf{Grz_\infty}\vdash \Gamma\Rightarrow\Delta$, then $\mathsf{Grz_{Seq}} \vdash \Gamma\Rightarrow\Delta$.
\end{thm}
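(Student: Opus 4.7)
The plan is simply to instantiate Lemma \ref{translation} with the empty set of formulas. Given an $\infty$-proof witnessing $\mathsf{Grz_\infty}\vdash \Gamma\Rightarrow\Delta$, I set $\Lambda := \emptyset$; then $\Lambda^\ast = \emptyset$, so the conclusion of the lemma degenerates to $\mathsf{Grz_{Seq}} \vdash \Gamma\Rightarrow\Delta$, which is exactly what is claimed.

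There is no remaining obstacle, since all the real work has already been carried out in the proof of Lemma \ref{translation}. That lemma, via its double induction on the cardinality of $Sub(\Gamma\Rightarrow\Delta)\setminus\Lambda$ and on the local height $\lvert\pi\rvert$, simulates each application of the non-well-founded rule ($\Box$) either by weakening through an already memoised implication $\Box(A\to\Box A)$ (Subcase~4.1) or by invoking the Grzegorczyk rule ($\mathsf{\Box_{Grz}}$) after enlarging $\Lambda$ with the principal formula $A$ (Subcase~4.2). The auxiliary set $\Lambda$ exists precisely to make the outer induction parameter strictly decrease when the translation descends through a right premise of ($\Box$), which is how the potential non-well-foundedness of $\pi$ is tamed. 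Starting the translation from $\Lambda = \emptyset$ therefore yields the theorem immediately, with no additional boxed implications introduced on the left-hand side of the endsequent.
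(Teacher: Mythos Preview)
Your proposal is correct and matches the paper's own proof: the theorem is stated to follow immediately from Lemma~\ref{translation}, and instantiating that lemma with $\Lambda=\emptyset$ is precisely the intended argument.
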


\section{Ultrametric spaces}
\label{SecUlt}
In this section we recall basic notions of the theory of ultrametric spaces (cf. \cite{Shor}) and consider several examples concerning $\infty$-proofs.

An \emph{ultrametric space} $(M,d)$ is a metric space that satisfies a stronger version of the triangle inequality: for any $x,y,z\in M$
$$d(x,z) \leqslant \max \{d(x,y), d(y,z)\}.$$ 

For $x\in M$ and $r\in [0, + \infty)$, the set $B_r(x)=\{ y\in M \mid d(x,y) \leqslant r\}$ is called the \emph{closed ball} with \emph{center $x$} and \emph{radius $r$}. Recall that a metric space $(M,d)$ is \emph{complete} if any descending sequence of closed balls, with radii tending to $0$, has a common point. An ultametric space $(M,d)$ is called \emph{spherically complete} if an arbitrary descending sequence of closed balls has a common point.

For example, consider the set $\mathcal P$ of all $\infty$-proofs of the system $\mathsf{Grz}_{\infty} +\mathsf{cut} $. We can define an ultrametric $d_{\mathcal P} \colon {\mathcal P} \times {\mathcal P} \to [0,1]$ on $\mathcal P$ by putting
\[d_{\mathcal P}(\pi,\tau) = 
\inf\{\frac{1}{2^n} \mid  \pi \sim_n \tau\}.
\]
We see that $d_{\mathcal P}(\pi , \tau) \leqslant 2^{-n}$ if and only if $\pi \sim_n \tau$. Thus, the ultrametric $d_{\mathcal P}$ can be considered as a measure of similarity between $\infty$-proofs.

\begin{prop}\label{ComplP}
$(\mathcal{P},d_{\mathcal P})$ is a (spherically) complete ultrametric space.
\end{prop}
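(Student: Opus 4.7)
The plan is to first verify that $d_{\mathcal{P}}$ satisfies the ultrametric axioms, the only non-trivial point being the strong triangle inequality. This will reduce to showing that $\sim_n$ is a transitive relation for every $n$: given $\pi \sim_n \tau$ and $\tau \sim_n \sigma$, the $n$-fragments of all three proofs coincide, so $\pi \sim_n \sigma$ and $d_{\mathcal{P}}(\pi, \sigma) \leqslant \max\{d_{\mathcal{P}}(\pi,\tau), d_{\mathcal{P}}(\tau,\sigma)\}$. Symmetry and the identity of indiscernibles are immediate: if $d_{\mathcal{P}}(\pi, \tau) = 0$, then $\pi \sim_n \tau$ for every $n$, whence $\pi$ and $\tau$ coincide as proof trees.

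For spherical completeness, I would take a descending sequence of closed balls $B_{r_1}(\pi_1) \supseteq B_{r_2}(\pi_2) \supseteq \dots$ and exhibit an element common to all of them. Since the distances in $\mathcal{P}$ take values only in $\{0\} \cup \{2^{-n} : n \geqslant 0\}$, each non-trivial ball $B_{r_i}(\pi_i)$ coincides with $\{\tau \in \mathcal{P} : \tau \sim_{n_i} \pi_i\}$ for a suitable natural number $n_i$. The inclusion $B_{r_{i+1}}(\pi_{i+1}) \subseteq B_{r_i}(\pi_i)$ then yields $n_i \leqslant n_{i+1}$ together with the compatibility condition $\pi_{i+1} \sim_{n_i} \pi_i$. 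If the sequence $(n_i)$ is eventually constant, the chain stabilizes and any element of the tail is common to all balls.

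The interesting case is when $n_i \to \infty$. Here I would define $\pi^{\ast}$ as the unique tree whose $n$-fragment coincides with the $n$-fragment of $\pi_i$ for every $i$ large enough that $n_i \geqslant n$; the compatibility condition makes this definition unambiguous, and specifying all the $n$-fragments determines $\pi^\ast$ as a proof tree, including the inference rule applied at each node. The main obstacle will be to verify that $\pi^\ast$ is again an $\infty$-proof, i.e., that every infinite branch in $\pi^\ast$ passes through a right premise of the rule $(\Box)$ infinitely many times. Local correctness at every node is inherited from the corresponding $\pi_i$. For the global progress condition, I would argue by contradiction: if some infinite branch of $\pi^\ast$ contained fewer than $n$ right premises of $(\Box)$ for some $n$, then this branch would not be cut in forming the $n$-fragment of $\pi^\ast$, which is impossible since that $n$-fragment coincides with the finite tree given by the $n$-fragment of a suitable $\pi_i$. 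Consequently $\pi^\ast \in \mathcal{P}$, and since $\pi^\ast \sim_{n_i} \pi_i$ for every $i$, it lies in every ball $B_{r_i}(\pi_i)$, establishing spherical completeness.
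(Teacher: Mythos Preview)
Your argument is correct. The paper actually states this proposition without proof, treating it as a routine observation, so there is no proof in the paper to compare against; your write-up supplies exactly the natural verification one would expect (ultrametric axioms via transitivity of $\sim_n$, then building the limit $\pi^\ast$ from the coherent system of $n$-fragments and checking the progress condition by the finiteness of each $n$-fragment).
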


Consider the following characterization of spherically complete ultrametric spaces. Let us write $x \equiv_r y$ if $d(x,y)\leqslant r$.
Trivially, the relation $\equiv_r$ is an equivalence relation for any ultrametric space and any number $r\geqslant 0$. 

\begin{prop}\label{sphcomp}
An ultametric space $(M,d)$ is spherically complete if and only if for any sequence $(x_i)_{i\in \mathbb N}$ of elements of $M$, where $x_i\equiv_{r_i}x_{i+1}$ and  
$r_i \geqslant r_{i+1}$ for all $i\in \mathbb N$, there is a point $x$ of $M$ such that $x \equiv_{r_i} x_i$ for any $i\in\mathbb N$.
\end{prop}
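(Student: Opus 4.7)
I would prove the two implications separately; the forward direction is immediate, and all the work lies in the backward direction.

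\emph{Forward direction.} Assume $(M,d)$ is spherically complete and a sequence $(x_i)_{i\in\mathbb N}$ with $x_i\equiv_{r_i}x_{i+1}$ and $r_i\geqslant r_{i+1}$ is given. Form the closed balls $B_i:=B_{r_i}(x_i)$. I would first verify $B_{i+1}\subseteq B_i$: for any $y\in B_{i+1}$, the ultrametric inequality gives
\[
d(y,x_i)\leqslant\max\{d(y,x_{i+1}),\,d(x_{i+1},x_i)\}\leqslant\max\{r_{i+1},r_i\}=r_i,
\]
so $y\in B_i$. Hence $(B_i)_{i\in\mathbb N}$ is a descending sequence of closed balls, and spherical completeness yields a common point $x$, which then satisfies $x\equiv_{r_i}x_i$ for all $i$.

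\emph{Backward direction.} Assume the sequence property and let $(C_i)_{i\in\mathbb N}$ be a descending sequence of closed balls, write $C_i=B_{s_i}(y_i)$, and set $x_i:=y_i$. Since $x_{i+1}\in C_i$, we get $d(x_i,x_{i+1})\leqslant s_i$, and, more generally, $d(x_i,x_j)\leqslant s_i$ for all $j\geqslant i$ (by an easy induction on $j$ using the ultrametric inequality and the fact that $x_j\in C_i$). I would then define
\[
r_i:=\sup_{j\geqslant i}d(x_i,x_j),
\]
which is well-defined because the supremum is bounded by $s_i$. By construction $d(x_i,x_{i+1})\leqslant r_i$. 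Moreover $r_{i+1}\leqslant r_i$, since for every $j\geqslant i+1$,
\[
d(x_{i+1},x_j)\leqslant\max\{d(x_{i+1},x_i),\,d(x_i,x_j)\}\leqslant r_i.
\]
Applying the assumed sequence property to $(x_i,r_i)$ yields $x\in M$ with $d(x,x_i)\leqslant r_i\leqslant s_i$, so $x\in C_i$ for every $i$, as required.

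\emph{Main obstacle.} The only real subtlety lies in the backward direction: although the balls $C_i$ descend, their radii $s_i$ need not form a monotone sequence, so one cannot simply take $r_i:=s_i$. The construction $r_i:=\sup_{j\geqslant i}d(x_i,x_j)$ (or, alternatively, passing to a subsequence of strictly decreasing balls and using the ultrametric fact that strict inclusion forces strictly smaller radius) is the device that bridges this gap. Everything else is routine application of the ultrametric triangle inequality.
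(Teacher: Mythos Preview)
Your proof is correct, and the forward direction matches the paper's exactly. In the backward direction the paper simply writes that for a descending chain $(B_{r_i}(x_i))$ one has $r_i\geqslant r_{i+1}$ and applies the hypothesis directly; you instead observe that inclusion of balls does not force the given radii to decrease and therefore replace them by $r_i:=\sup_{j\geqslant i}d(x_i,x_j)$. This is a genuine (if small) difference: your version is more careful, since the paper's claim ``$r_i\geqslant r_{i+1}$'' is not justified by ball inclusion alone in an ultrametric space (e.g.\ $B_2(x)=B_1(x)$ when all nonzero distances are $1$). Your sup construction cleanly repairs this, at the cost of one extra paragraph; otherwise the two arguments are the same.
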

\begin{proof}
($\Rightarrow$) Assume $(M,d)$ is a spherically complete ultrametric space. Consider a sequence $(x_i)_{i\in \mathbb N}$ of elements of $M$ such that $x_i\equiv_{r_i}x_{i+1}$ and $r_i \geqslant r_{i+1}$ for all $i\in \mathbb N$.  Then the sequence $(B_{r_i}(x_i))$ is a descending sequence of closed balls, and therefore by spherical completeness has a common point $x$. Trivially, the point $x$ satisfies the desired conditions.

($\Leftarrow$) Assume there is a descending sequence of closed balls $(B_{r_i}(x_i))$. We have that $x_0 \equiv_{r_0} x_1 \equiv_{r_1} \dots$ and $r_i \geqslant r_{i+1}$ for all $i\in \mathbb N$. So there is an element $x\in M$ such that $x \equiv_{r_i} x_i$, meaning it lies in all the balls.
\end{proof}

In an ultrametric space $(M,d)$, a function $f \colon M\to M$ is called \emph{non-expansive} if $d(f(x),f(y)) \leqslant d(x,y)$ for all $x,y\in M$.
For ultrametric spaces $(M, d_M)$ and $(N, d_N)$, the Cartesian product $M \times N$ can be also considered as an ultrametric space with the metric $$d_{M \times N} ((x_1,y_1),(x_2,y_2)) = \max \{d_M(x_1,x_2), d_N(y_1,y_2) \}.$$ 


Let us consider another example. For $m\in \mathbb N$, let $\mathcal F_m$ denote the set of all non-expansive functions from $\mathcal P^m$ to $\mathcal P$. Note that any function $\mathsf u\colon \mathcal P^m\to\mathcal P$ is non-expansive if and only if for any tuples $\vec\pi$ and $\vec\pi'$, and any $n\in\mathbb N$ we have 
$$\pi_1\sim_n\pi'_1,\dotsc,\pi_m\sim_n\pi'_m\Rightarrow \mathsf u(\vec\pi)\sim_n\mathsf u(\vec\pi').$$
Now we introduce an ultrametric for $\mathcal F_m$. For $\mathsf{a,b}\in \mathcal F_m$, we write 
$\mathsf a\sim_{n,k}\mathsf b$ if $\mathsf a(\vec{\pi})\sim_n\mathsf b(\vec{\pi})$ for any $\vec{\pi}\in\mathcal P^m$ and, in addition, $\mathsf a(\vec{\pi}) \sim_{n+1}\mathsf b(\vec{\pi})$ whenever $\Sigma_{i=1}^m\lvert\pi_i\rvert< k$.\footnote{This definition is inspired by \cite[Subsection 2.1]{GiMi}.}
An ultrametric $l_m$ on $\mathcal F_m$ is defined by 
\[l_m(\mathsf a,\mathsf b)=\frac{1}{2}\inf\{\frac{1}{2^n}+\frac{1}{2^{n+k}}\mid \mathsf a\sim_{n,k}\mathsf b\}.\]
We see that $l_m(\mathsf a,\mathsf b) \leqslant 2^{-n-1}+2^{-n-k-1}$ if and only if $\mathsf a\sim_{n,k}\mathsf b$.

\begin{prop}\label{SphCom}
$(\mathcal F_m, l_m)$ is a spherically complete ultrametric space.
\end{prop}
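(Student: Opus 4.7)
The plan is to verify the criterion in Proposition \ref{sphcomp}: given a sequence $(\mathsf a_i)_{i\in\mathbb N} \subseteq \mathcal F_m$ with $\mathsf a_i \equiv_{r_i} \mathsf a_{i+1}$ and $r_i \geq r_{i+1}$, produce $\mathsf a \in \mathcal F_m$ with $\mathsf a \equiv_{r_i} \mathsf a_i$ for every $i$. Since the values of $l_m$ lie in $\{0\} \cup \{2^{-n-1}+2^{-n-k-1} : n,k \geq 0\}$, each closed ball in $\mathcal F_m$ coincides with an equivalence class $C_{n,k}(\mathsf a) := \{\mathsf b : \mathsf a \sim_{n,k} \mathsf b\}$, and we may assume $r_i = 2^{-n_i-1}+2^{-n_i-k_i-1}$ with $(n_i,k_i)$ non-decreasing in the partial order $(n,k) \leq (n',k')$ iff $n < n'$ or $(n = n'$ and $k \leq k')$; chaining then yields $\mathsf a_i \sim_{n_i,k_i} \mathsf a_j$ for all $j \geq i$.

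I would then split on the asymptotic behaviour of $(n_i,k_i)$. If it stabilizes, I take $\mathsf a := \mathsf a_I$ for any index $I$ past stabilization. If $n_i \to \infty$, then for each $\vec\pi$ the sequence $(\mathsf a_i(\vec\pi))$ is Cauchy in $\mathcal P$, so I set $\mathsf a(\vec\pi) := \lim_i \mathsf a_i(\vec\pi)$ using completeness of $\mathcal P$ (Proposition \ref{ComplP}); non-expansiveness of $\mathsf a$ and each relation $\mathsf a \sim_{n_i,k_i} \mathsf a_i$ follow by passing to the limit in the chain estimates for $\mathsf a_j$, $j \geq i$. The remaining case, in which $n_i$ stabilizes at some $n^*$ from an index $I$ onward while $k_i \to \infty$, I would handle by a diagonal construction: for $M \in \mathbb N$ put $\phi(M) := \min\{j \geq I : k_j > M\}$ and define $\mathsf a(\vec\pi) := \mathsf a_{\phi(M)}(\vec\pi)$ where $M := \Sigma_{j=1}^{m} \lvert\pi_j\rvert$. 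Because $\vec\pi \sim_n \vec\pi'$ with $n \geq 1$ forces $\lvert\pi_j\rvert = \lvert\pi'_j\rvert$ for each $j$ (both sharing their main fragments), $M = M'$ and hence $\phi(M) = \phi(M')$, so non-expansiveness of $\mathsf a$ reduces to that of the single function $\mathsf a_{\phi(M)}$. The relation $\mathsf a \sim_{n_i,k_i} \mathsf a_i$ is then verified pointwise using the chain between indices $\min(\phi(M),i)$ and $\max(\phi(M),i)$: if $\phi(M) \geq i$ one reads off the relation directly at level $(n_i,k_i)$, while if $\phi(M) < i$ one uses $n_{\phi(M)} = n^* = n_i$ together with the defining property $k_{\phi(M)} > M$, which forces the fine clause $\sim_{n^*+1}$ at $\vec\pi$.

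I expect the main obstacle to be precisely the subcase $n_i$ stable, $k_i \to \infty$. A naive pointwise choice through spherical completeness of $\mathcal P$ only produces $\mathsf a(\vec\pi) \sim_{n^*+1} \mathsf a_i(\vec\pi)$ for large $i$, and the ultrametric triangle inequality is then too weak to recover $\mathsf a(\vec\pi) \sim_n \mathsf a(\vec\pi')$ for $n > n^*+1$ from $\vec\pi \sim_n \vec\pi'$, so non-expansiveness can fail. The diagonal choice of $\phi(M)$ bypasses this by selecting a concrete $\mathsf a_{\phi(M)}$ from the original sequence, whose own non-expansiveness does the work, while the defining inequality $k_{\phi(M)} > M$ supplies exactly the fine information needed to preserve the second clause of each relation $\mathsf a \sim_{n_i,k_i} \mathsf a_i$.
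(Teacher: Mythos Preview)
Your proposal is correct and follows essentially the same route as the paper: both invoke the criterion of Proposition~\ref{sphcomp}, split into the three cases $(n_i,k_i)$ eventually constant, $n_i\to\infty$, and $n_i$ stable with $k_i\to\infty$, and in the last case define $\mathsf a(\vec\pi)$ as $\mathsf a_j(\vec\pi)$ for the least index $j$ (past stabilization) with $k_j>\sum_s\lvert\pi_s\rvert$, using that $\vec\pi\sim_n\vec\pi'$ for $n\geq 1$ forces equal local-height sums to recover non-expansiveness. Your write-up is in fact slightly more explicit than the paper's in verifying $\mathsf a\sim_{n_i,k_i}\mathsf a_i$ via the chain inequalities, and your remark on why a naive pointwise choice fails in the third case is a useful gloss the paper omits.
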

\begin{proof}
Assume we have a series
$\mathsf a_0\sim_{n_0,k_0}\mathsf a_1\sim_{n_1,k_1}\dotso  $,
where the sequence $r_i=2^{-n_i}+2^{-n_i-k_i-1}$ is non-increasing. From Proposition \ref{sphcomp}, it is sufficient to find a function $\mathsf a\in\mathcal F_m$ such that $\mathsf a\sim_{n_i,k_i}\mathsf a_i$ for all $i\in \mathbb N$.


Suppose $\lim_{i\to\infty}r_i=0$. Consider a tuple $\vec{\pi}\in\mathcal P^m $. We have that $\lim_{i\to\infty}n_i=+ \infty$ and $\mathsf a_0 (\vec{\pi})\sim_{n_0}\mathsf a_1 (\vec{\pi})\sim_{n_1}\dotso$ .
By Proposition \ref{ComplP}, there is an $\infty$-proof $\tau$ such that $\tau \sim_{n_i} \mathsf a_i (\vec{\pi})$ for all $i\in \mathbb N$. We define $\mathsf a (\vec{\pi}) = \tau$. We need to check that the mapping $\mathsf a$ is non-expansive. If for tuples $\vec\pi$ and $\vec\pi'$ we have $\pi_1\sim_n\pi'_1,\ldots,\pi_m\sim_n\pi'_m$ for some $n\in\mathbb N$, then we can choose $i$ such that $n_i>n$. We have $\mathsf a(\vec\pi)\sim_n \mathsf a_i(\vec\pi)\sim_n\mathsf a_i(\vec\pi')\sim_n\mathsf a(\vec\pi')$. Therefore $\mathsf a(\vec\pi)\sim_n \mathsf a(\vec\pi')$ and the mapping $\mathsf a$ is non-expansive.

If $\lim_{i\to\infty}r_i>0$, then $\lim_{i\to\infty}n_i= n$ for some $n \in \mathbb{N}$. We have two cases: either $\lim_{i\to\infty}k_i= k$ for a number $k \in \mathbb{N}$, or $\lim_{i\to\infty}k_i= +\infty$. In the first case, there is $j\in\mathbb N$ such that $(n_i,k_i)=(n_j,k_j)$ for all $i>j$, and we can take $\mathsf a_j$ as $\mathsf a$. Here the mapping $\mathsf a$ is obviously non-expansive.
In the second case, for a tuple $\vec{\pi}$ we define $\mathsf a (\vec{\pi})$ to be $\mathsf{a}_j (\vec{\pi})$, where $j= \min \{ i \in \mathbb{N} \mid n_i= n \text{ and } \sum_{s=1}^m\lvert\pi_s\rvert < k_i \}$. For all tuples $\vec\pi$ and $\vec\pi'$ we have $\mathsf a(\vec\pi)\sim_0 \mathsf a(\vec\pi')$. If for tuples $\vec\pi$ and $\vec\pi'$ and some $n\geqslant 1$ we have $\pi_1\sim_n\pi'_1,\ldots,\pi_m\sim_n\pi'_m$, then $\sum_{s=1}^m\lvert\pi_s\rvert=\sum_{s=1}^m\lvert\pi_s'\rvert=t$ and $\mathsf a(\vec\pi)=\mathsf a_j(\vec\pi)\sim_n \mathsf a_j(\vec\pi')=\mathsf a(\vec\pi')$, where $j=\min \{ i \in \mathbb{N} \mid n_i= n \text{ and } t < k_i \}$. Therefore $\mathsf a(\vec\pi)\sim_n \mathsf a(\vec\pi')$ and the mapping $\mathsf a$ is non-expansive.





\end{proof}

In an ultrametric space $(M,d)$, a function $f\colon M \to M$ is called \emph{(strictly) contractive} if $d(f(x),f(y)) < d(x,y)$ when $x \neq y$.

Notice that any operator $\mathsf U\colon\mathcal F_m\to \mathcal F_m$ is strictly contractive if and only if for any $\mathsf a,\mathsf b\in\mathcal F_m$, and any $n,k\in\mathbb N$ we have 
$$\mathsf a\sim_{n,k}\mathsf b\Rightarrow \mathsf U(\mathsf a)\sim_{n,k+1}\mathsf U(\mathsf b).$$

Now we state a generalization of the Banach's fixed-point theorem for ultrametric spaces that will be used in the next sections.

\begin{thm}[Prie\ss-Crampe \cite{PrCr}]\label{fixpoint}
Let $(M,d)$ be a non-empty spherically complete ultrametric space. Then every strictly contractive mapping $f\colon M\to M$ has a unique fixed-point. 
\end{thm}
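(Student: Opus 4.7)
The plan is to settle uniqueness by a one-line contradiction and handle existence through a Zorn's lemma argument applied to a carefully chosen family of closed balls naturally associated to the points of $M$.

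Uniqueness is immediate: two distinct fixed points $x \neq y$ would yield $d(x,y) = d(f(x),f(y)) < d(x,y)$. For existence, I associate to each $x \in M$ the closed ball $B_x := B_{d(x,f(x))}(x)$, centred at $x$ with radius $d(x,f(x))$. The first step is to verify that $f$ maps $B_x$ into itself: if $y \in B_x$, then the ultrametric inequality combined with contractivity gives
\[
d(f(y),x) \leqslant \max\{d(f(y),f(x)), d(f(x),x)\} \leqslant \max\{d(y,x), d(x,f(x))\} \leqslant d(x,f(x)).
\]
Let $\mathcal{B} = \{B_x : x \in M\}$, ordered by inclusion. Given a descending chain $(B_{x_\alpha})$, spherical completeness supplies a point $z$ common to every ball of the chain, and the same style of ultrametric computation shows $d(z,f(z)) \leqslant d(x_\alpha,f(x_\alpha))$ for every $\alpha$, whence $B_z \subseteq B_{x_\alpha}$ for each $\alpha$. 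So $B_z$ is a lower bound, and Zorn's lemma delivers a minimal element $B_{x^\ast} \in \mathcal{B}$.

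To finish, I apply the same inclusion argument with $f(x^\ast)$ in place of $x^\ast$: using the inequality $d(f(x^\ast), f(f(x^\ast))) \leqslant d(x^\ast, f(x^\ast))$ coming from contractivity, one obtains $B_{f(x^\ast)} \subseteq B_{x^\ast}$. Minimality forces equality of these balls, so $x^\ast \in B_{f(x^\ast)}$, that is, $d(x^\ast, f(x^\ast)) \leqslant d(f(x^\ast), f(f(x^\ast)))$. If $x^\ast \neq f(x^\ast)$ then strict contractivity makes the right-hand side strictly smaller than the left, a contradiction; hence $x^\ast = f(x^\ast)$ is the desired (and, by uniqueness, the only) fixed point.

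The delicate point I expect to wrestle with is bridging the paper's sequential formulation of spherical completeness (Proposition~\ref{sphcomp}) with the possibly transfinite chains that arise in Zorn's lemma. I would handle this via the standard observation that any strictly decreasing well-ordered transfinite sequence of real numbers is necessarily countable (between consecutive values one finds distinct rationals), so within any chain in $\mathcal{B}$ the set of distinct radii $d(x_\alpha, f(x_\alpha))$ is at most countable; after passing to a cofinal subsequence, Proposition~\ref{sphcomp} applies directly and supplies the common point $z$ that the Zorn step requires.
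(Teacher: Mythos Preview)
The paper does not supply a proof of this theorem; it is quoted from Prie\ss-Crampe \cite{PrCr} and used as a black box. Your argument is essentially the standard proof: form the balls $B_x = B_{d(x,f(x))}(x)$, observe $f(B_x)\subseteq B_x$, apply Zorn's lemma to $\mathcal{B}$ ordered by inclusion, and show that a minimal element must collapse to a fixed point. The uniqueness argument and all the main existence steps are correct.

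One technical slip in your last paragraph: from ``every strictly decreasing well-ordered sequence of reals is countable'' you infer that the set of distinct radii in any Zorn chain is at most countable. This does not follow, since a Zorn chain is totally ordered but need not be well-ordered, and a totally ordered subset of $\mathbb{R}$ can have the cardinality of the continuum. The repair is simpler than what you attempted: the radii form a nonempty subset of $[0,\infty)$, so a countable decreasing sequence of radii approaches their infimum; the corresponding balls are cofinal in the chain (you have already argued that, within the chain, smaller radius forces smaller ball), and Proposition~\ref{sphcomp} then applies to that countable subchain to produce the required common point. Alternatively, read the paper's phrase ``an arbitrary descending sequence of closed balls'' in the definition of spherical completeness as referring to arbitrary chains of balls; this is the standard meaning in the ultrametric literature and the hypothesis in \cite{PrCr}, so with that reading no reduction to the countable case is needed at all.
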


\section{Admissible rules}\label{SecAdm}



In this section, for the system $\mathsf{Grz}_{\infty} +\mathsf{cut}$, we state admissibility of auxiliary inference rules, which will be used in the proof of the  cut-elimination theorem. 

Recall that the set $\mathcal P$ of all $\infty$-proofs of the system $\mathsf{Grz}_{\infty} +\mathsf{cut} $ can be considered as an ultrametric space with the metric $d_{\mathcal P}$.

By $\mathcal{P}_n$ we denote the set of all $\infty$-proofs that do not contain applications of the cut rule in their $n$-fragments. We also set $\mathcal{P}_0= \mathcal{P}$.

A mapping $\mathsf u:\mathcal P^m\to \mathcal P$ is called \emph{adequate} if for any $n\in\mathbb N$ we have $\mathsf u(\pi_1,\ldots,\pi_n)\in\mathcal P_n$, whenever $\pi_i\in\mathcal P_n$ for all $i\leqslant n$.

In $\mathsf{Grz}_{\infty} +\mathsf{cut}$, we call a single-premise inference rule \emph{strongly admissible} if there is a non-expansive adequate mapping $\mathsf{u}\colon\mathcal{P} \to \mathcal{P}$ that maps any $\infty$-proof of the premise of the rule to an $\infty$-proof of the conclusion. The mapping $\mathsf{u}$ must also satisfy one additional condition: $\lvert \mathsf{u}(\pi)\rvert \leqslant \lvert \pi \rvert$ for any $\pi \in \mathcal{P}$.

In the following lemmata, non-expansive mappings are defined in a standard way by induction on the local heights of $\infty$-proofs for the premises. So we omit further details.

\begin{lem}\label{strongweakening}
For any finite multisets of formulas $\Pi$ and $\Sigma$, the inference rule
\begin{gather*}
\AXC{$\Gamma\Rightarrow\Delta$}
\LeftLabel{$\mathsf{wk}_{\Pi, \Sigma}$}
\UIC{$\Pi,\Gamma\Rightarrow\Delta,\Sigma$}
\DisplayProof
\end{gather*}
is strongly admissible in $\mathsf{Grz}_{\infty} +\mathsf{cut}$. 
\end{lem}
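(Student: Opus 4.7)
The plan is to construct the required mapping $\mathsf{u}\colon \mathcal{P}\to\mathcal{P}$ by recursion on the local height $\lvert\pi\rvert$ of the input, sending an $\infty$-proof with conclusion $\Gamma\Rightarrow\Delta$ to an $\infty$-proof with conclusion $\Pi,\Gamma\Rightarrow\Delta,\Sigma$. If $\pi$ is a single initial sequent (an axiom or a $\bot$-axiom), then weakening its conclusion yields another initial sequent of the same form, which I take as $\mathsf{u}(\pi)$. If the last rule of $\pi$ is one of $\mathsf{\to_L}$, $\mathsf{\to_R}$, $\mathsf{refl}$, or $\mathsf{cut}$, I recurse on each immediate subproof with the same pair $(\Pi,\Sigma)$ and reapply the rule; the side contexts propagate cleanly because these rules only modify the principal formula. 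In the $\mathsf{\Box}$-case, where the left premise concludes $\Phi,\Box\Pi'\Rightarrow A,\Delta'$ and the right premise is $\Box\Pi'\Rightarrow A$, I recurse on the left premise only and copy the right premise verbatim; the new $\mathsf{\Box}$-inference then legitimately concludes $\Pi,\Phi,\Box\Pi'\Rightarrow\Box A,\Delta',\Sigma$, since $\Pi$ and $\Sigma$ are absorbed into the arbitrary $\Gamma$, $\Delta$ contexts of the rule schema.

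Next I would verify the four required properties. The recursion is well-founded because it descends strictly along the main fragment of $\pi$, which has finitely many nodes by the definition of local height. The bound $\lvert\mathsf{u}(\pi)\rvert\leqslant\lvert\pi\rvert$ is immediate since the tree shape of the main fragment is preserved. The guardedness condition on infinite branches carries over because the overall tree structure of $\mathsf{u}(\pi)$ is identical to that of $\pi$; only sequents in the main fragment are altered, by the addition of $\Pi$ and $\Sigma$. Adequacy follows because no new cut-inferences are introduced: if the $n$-fragment of $\pi$ is cut-free, so is the $n$-fragment of $\mathsf{u}(\pi)$. For non-expansiveness, I would argue by induction on $n$ that $\pi\sim_n\tau$ implies $\mathsf{u}(\pi)\sim_n\mathsf{u}(\tau)$. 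The $n$-fragment of $\mathsf{u}(\pi)$ is determined by the $n$-fragment of $\pi$: the recursive clauses for non-$\mathsf{\Box}$ rules preserve agreement by the induction hypothesis applied to the subproofs, and the $\mathsf{\Box}$-clause copies right premises verbatim, so that the portion of the right premise visible within the $n$-fragment is the same in $\pi$ and $\tau$ by assumption.

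The construction is entirely elementary and requires no appeal to the Prie\ss-Crampe fixed-point theorem, since the recursion on local height is already well-founded on each input. The only genuine point to check is that weakening commutes with the $\mathsf{\Box}$-rule in the intended sense, i.e., that the boxed side context $\Box\Pi'$ transported into the right premise need not be extended when $\Pi$ is added on the left of the conclusion; this is immediate because the right premise already lists exactly those boxed formulas preserved by the schema, and extra left-context material is permitted to sit in the $\Gamma$-position. Beyond this bookkeeping, the main conceptual step is the non-expansiveness argument in the $\mathsf{\Box}$-case, but it is straightforward once one observes that leaving right premises untouched is exactly what makes the operator Lipschitz with constant one in the metric $d_{\mathcal P}$.
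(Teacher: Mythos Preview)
Your proposal is correct and matches the paper's approach exactly: the paper simply states that the mapping is ``defined in a standard way by induction on the local heights of $\infty$-proofs for the premises'' and omits all further details. Your write-up supplies precisely those details---the case analysis on the last rule, the handling of the $\Box$-rule by recursing on the left premise and copying the right premise verbatim, and the verification of non-expansiveness, adequacy, and the local-height bound---none of which the paper spells out.
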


\begin{lem}\label{inversion}
For any formulas $A$ and $B$, the rules
\begin{gather*}
\AXC{$\Gamma , A \rightarrow B \Rightarrow  \Delta$}
\LeftLabel{$\mathsf{li}_{A \to B}$}
\UIC{$\Gamma ,B \Rightarrow  \Delta$}
\DisplayProof\qquad
\AXC{$\Gamma , A \rightarrow B \Rightarrow  \Delta$}
\LeftLabel{$\mathsf{ri}_{A \to B}$}
\UIC{$\Gamma  \Rightarrow  A, \Delta$}
\DisplayProof\\\\
\AXC{$\Gamma  \Rightarrow   A \rightarrow B, \Delta$}
\LeftLabel{$\mathsf{i}_{A \to B}$}
\UIC{$\Gamma ,A \Rightarrow B, \Delta$}
\DisplayProof\qquad
\AXC{$\Gamma  \Rightarrow   \bot, \Delta$}
\LeftLabel{$\mathsf{i}_{\bot}$}
\UIC{$\Gamma  \Rightarrow \Delta$}
\DisplayProof\qquad 
\AXC{$\Gamma \Rightarrow \Box A, \Delta$}
\LeftLabel{$\mathsf{li}_{\:\Box A}$}
\UIC{$\Gamma \Rightarrow A, \Delta$}
\DisplayProof
\end{gather*}
are strongly admissible in $\mathsf{Grz}_{\infty} +\mathsf{cut}$.
\end{lem}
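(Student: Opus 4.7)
For each of the five listed rules I intend to produce the witnessing non-expansive adequate mapping $\mathsf{u}\colon\mathcal P\to\mathcal P$ by the same recursive template, defined by induction on the local height $\lvert\pi\rvert$ of the input $\infty$-proof and by case analysis on its last inference. I describe the template for $\mathsf{i}_{A\to B}$; the other four cases differ only in their axiom and ``principal'' clauses.

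If $\pi$ is an initial sequent $\Gamma',p\Rightarrow p,\Delta'$ or $\Gamma',\bot\Rightarrow\Delta'$, then since $A\to B$ is neither atomic nor $\bot$, removing it from the succedent and adding $A$ on the left and $B$ on the right preserves the shape of the axiom, so I output the resulting axiom. If the last inference is a $\mathsf{\to_R}$ whose principal formula is the designated occurrence of $A\to B$, I output its unique premise $\Gamma,A\Rightarrow B,\Delta$ directly. In every other case the designated $A\to B$ is a side formula of the last inference. If that inference is propositional (another $\mathsf{\to_R}$, a $\mathsf{\to_L}$, or $\mathsf{refl}$) or $\mathsf{cut}$, I recursively invert each premise (the designated occurrence of $A\to B$ is preserved in each) and re-apply the same rule. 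If the last inference is $\mathsf{\Box}$, the designated $A\to B$ lies necessarily in the side context of the left premise, never in the ``modal'' right premise $\Box\Pi\Rightarrow C$ which contains only boxed antecedents and a succedent formula extracted from $\Box C$; I therefore recursively invert the left premise, pass the right premise through unchanged, and reapply $\mathsf{\Box}$.

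For the other four rules the template is identical, with only the principal clause adjusted: $\mathsf{li}_{A\to B}$ returns the left premise of a $\mathsf{\to_L}$ whose principal formula is the designated $A\to B$; $\mathsf{ri}_{A\to B}$ returns the right premise of such a $\mathsf{\to_L}$; $\mathsf{li}_{\Box A}$ returns the left premise of a $\mathsf{\Box}$ whose principal formula is $\Box A$ (and discards the right premise entirely); and $\mathsf{i}_{\bot}$ has no principal clause at all, since no inference rule introduces $\bot$ in the succedent. In each case the axiom clause requires the same one-line check that dropping or rearranging the inverted formula leaves an initial sequent.

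Three properties then need to be verified. The local-height estimate $\lvert\mathsf{u}(\pi)\rvert\leqslant\lvert\pi\rvert$ is immediate, because the recursion re-applies each inference of the main fragment of $\pi$ at most once and strictly decreases depth in the principal clauses. Adequacy follows because a $\mathsf{cut}$ appears in $\mathsf{u}(\pi)$ only in response to a $\mathsf{cut}$ already present at the corresponding place in $\pi$, so the $n$-fragment of $\mathsf{u}(\pi)$ is cut-free whenever that of $\pi$ is. Non-expansiveness follows from the observation that the recursion on the main fragment of $\pi$ only inspects the main fragment of $\pi$, and right premises of $\mathsf{\Box}$ are forwarded verbatim into $\mathsf{u}(\pi)$; hence $\pi\sim_n\tau$ implies $\mathsf{u}(\pi)\sim_n\mathsf{u}(\tau)$ by a straightforward induction on $n$. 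The main, rather mild, obstacle is simply notational hygiene: marking and tracking the designated inverted occurrence as one commutes through multiset contexts and through the two-premise rules $\mathsf{\to_L}$, $\mathsf{\Box}$, and $\mathsf{cut}$, so that each recursive call on a premise knows which formula to target.
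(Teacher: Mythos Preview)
Your proposal is correct and is exactly the standard argument the paper has in mind: the paper's own proof of this lemma consists of the single sentence ``non-expansive mappings are defined in a standard way by induction on the local heights of $\infty$-proofs for the premises,'' and your template---induction on $\lvert\pi\rvert$, commuting the inversion past side inferences, returning the appropriate premise at the principal step, and passing the right premise of $(\Box)$ through unchanged---is precisely that standard construction. The only point you leave implicit is how $\mathsf{u}$ behaves on $\infty$-proofs whose root sequent does not contain the designated formula at all (since $\mathsf{u}$ must be total on $\mathcal{P}$), but this is handled trivially by returning $\pi$ itself in that case, and it does not affect any of the three properties you verify.
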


\begin{lem}\label{weakcontraction}
For any atomic proposition $p$, the rules
\begin{gather*}
\AXC{$\Gamma , p,p \Rightarrow  \Delta$}
\LeftLabel{$\mathsf{acl}_{p}$}
\UIC{$\Gamma ,p \Rightarrow  \Delta$}
\DisplayProof\qquad
\AXC{$\Gamma \Rightarrow p,p, \Delta$}
\LeftLabel{$\mathsf{acr}_{p}$}
\UIC{$\Gamma \Rightarrow p, \Delta$}
\DisplayProof
\end{gather*}
are strongly admissible in $\mathsf{Grz}_{\infty} +\mathsf{cut}$.
\end{lem}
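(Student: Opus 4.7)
The plan is to realize $\mathsf{acl}_p$ and $\mathsf{acr}_p$ by non-expansive adequate mappings $\mathsf{u}_l,\mathsf{u}_r\colon\mathcal P\to\mathcal P$ defined by induction on the local height $\lvert\pi\rvert$ of the input $\infty$-proof, case-splitting on its last inference. I describe $\mathsf{u}_l$; the construction of $\mathsf{u}_r$ is entirely symmetric.

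For the base case $\lvert\pi\rvert=0$, the root of $\pi$ is an initial sequent $\Gamma,p,p\Rightarrow\Delta$. Since $p$ is atomic and distinct from $\bot$, the witness of the axiom (either a common atomic proposition appearing on both sides, or $\bot$ in the antecedent) is untouched by removing one copy of $p$, so $\Gamma,p\Rightarrow\Delta$ is an initial sequent of the same form and I take it as $\mathsf{u}_l(\pi)$.

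For the inductive step, I consider the last rule of $\pi$. Atomicity of $p$ guarantees that neither copy can be the principal formula of any logical rule, and the cut rule does not involve the duplicated pair in its conclusion either. Hence in every case other than $\Box$, the two copies of $p$ sit in the side context, and I apply $\mathsf{u}_l$ recursively to the premises --- which have strictly smaller local height --- before re-applying the same rule. For $\Box$ with premises $\Gamma',p,p,\Box\Pi\Rightarrow A,\Delta'$ and $\Box\Pi\Rightarrow A$, the duplicated pair occurs only in the left premise (since $p\notin\Box\Pi$), so I recurse on the left premise and pass the right premise through unchanged.

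Verification of the three required conditions is then routine. Adequacy holds because no new cut is ever introduced. The height bound $\lvert\mathsf{u}_l(\pi)\rvert\leqslant\lvert\pi\rvert$ holds because I either return an initial sequent or re-apply a single inference above recursively transformed premises, and the recursion strictly decreases local height. Non-expansiveness, i.e.\ $\pi\sim_n\pi'\Rightarrow\mathsf{u}_l(\pi)\sim_n\mathsf{u}_l(\pi')$, holds because the construction is structural on the main fragment and passes the right $\Box$-premises --- across which the similarity counter $\sim_n$ decrements --- through verbatim. I do not expect a genuine obstacle: unlike the cut-elimination operator built later via Theorem \ref{fixpoint}, this construction is a straightforward single-pass induction on local height and needs no fixed-point machinery, precisely because the recursion never descends into a right $\Box$-premise.
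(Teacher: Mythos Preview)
Your proposal is correct and matches the paper's own approach, which the authors summarize as ``non-expansive mappings are defined in a standard way by induction on the local heights of $\infty$-proofs for the premises.'' The only small omission is that $\mathsf{u}_l$ must be a total map $\mathcal{P}\to\mathcal{P}$, so you should stipulate that it returns $\pi$ unchanged whenever the root sequent does not have the form $\Gamma,p,p\Rightarrow\Delta$; this is needed for the conditions (non-expansiveness, adequacy, $\lvert\mathsf{u}_l(\pi)\rvert\leqslant\lvert\pi\rvert$) to even be stated for arbitrary $\pi$, and it is how the paper handles the analogous situation in later constructions such as $\mathsf{re}_p$.
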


\section{Cut elimination}
\label{SecCut}


In this section, we construct a continuous function from $\mathcal{P}$ to $\mathcal{P}$, which maps any $\infty$-proof of the system $\mathsf{Grz}_{\infty} +\mathsf{cut}$ to a cut-free $\infty$-proof of the same sequent. 

Let us call a pair of $\infty$-proofs $(\pi,\tau)$ a \emph{cut pair} if $\pi$ is an $\infty$-proof of the sequent $\Gamma\Rightarrow\Delta,A$ and $\tau$ is an $\infty$-proof of the sequent $A, \Gamma\Rightarrow \Delta$ for some $\Gamma, \Delta$, and $A$. For a cut pair $(\pi,\tau)$, we call the sequent $\Gamma\Rightarrow \Delta$ its \emph{cut result} and the formula $A$ its cut formula.

For a modal formula $A$, a non-expansive mapping $\mathsf{u}$ from $\mathcal P \times \mathcal P$ to $\mathcal P$ is called \emph{$A$-removing} if it maps every cut pair $(\pi,\tau)$ with the cut formula $A$ to an $\infty$-proof of its cut result.
By $\mathcal R_A$, let us denote the set of all $A$-removing mappings.

\begin{lem}\label{Comp R_A}
For each formula $A$, the pair $(\mathcal R_A,l_2)$ is a non-empty spherically complete ultrametric space.

\end{lem}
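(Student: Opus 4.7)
The plan is to realise $\mathcal R_A$ as a non-empty subspace of $(\mathcal F_2, l_2)$ which inherits spherical completeness from Proposition \ref{SphCom}. Since every $A$-removing mapping is non-expansive by definition, $\mathcal R_A \subseteq \mathcal F_2$, so $(\mathcal R_A, l_2)$ is automatically an ultrametric space without further argument.

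For non-emptiness, I would exhibit a canonical ``apply-cut'' mapping $\mathsf u_{\mathsf c}$: fix any $\sigma_0 \in \mathcal P$, and for each $(\pi, \tau) \in \mathcal P^2$, let $\mathsf u_{\mathsf c}(\pi, \tau)$ be the $\infty$-proof obtained by applying the $\mathsf{cut}$ rule to $\pi$ and $\tau$ when their root sequents fit the cut-pair template $\Gamma \Rightarrow \Delta, A$ and $A, \Gamma \Rightarrow \Delta$, and set $\mathsf u_{\mathsf c}(\pi, \tau) = \sigma_0$ otherwise. Whether $(\pi, \tau)$ is a cut pair depends only on the root sequents, which are preserved by $\sim_n$ for $n \geq 1$, and prepending a single inference on top of $\pi$ and $\tau$ does not affect the pattern of right premises of $\Box$ along branches. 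Hence $\mathsf u_{\mathsf c}$ is non-expansive, and by construction it sends every cut pair with cut formula $A$ to a proof of its cut result, so $\mathsf u_{\mathsf c} \in \mathcal R_A$.

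For spherical completeness, I would use the characterisation in Proposition \ref{sphcomp}: take an arbitrary sequence $\mathsf a_0 \sim_{n_0, k_0} \mathsf a_1 \sim_{n_1, k_1} \dotsc$ in $\mathcal R_A$ with $r_i = 2^{-n_i - 1} + 2^{-n_i - k_i - 1}$ non-increasing. The construction in the proof of Proposition \ref{SphCom}, carried out inside $\mathcal F_2$, produces a common point $\mathsf a \in \mathcal F_2$ with $\mathsf a \sim_{n_i, k_i} \mathsf a_i$ for every $i$; the only remaining task is to verify $\mathsf a \in \mathcal R_A$. Fix a cut pair $(\pi, \tau)$ with cut formula $A$ and cut result $\Gamma \Rightarrow \Delta$. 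Since $\mathsf a(\pi, \tau) \in \mathcal P$ by construction, it suffices to show that its root sequent is $\Gamma \Rightarrow \Delta$. In each of the three subcases of the proof of Proposition \ref{SphCom} there is a $j \in \mathbb N$ such that $\mathsf a(\pi, \tau)$ and $\mathsf a_j(\pi, \tau)$ have the same root: either $\mathsf a(\pi, \tau) \sim_{n_j} \mathsf a_j(\pi, \tau)$ with $n_j \geqslant 1$ (when $\lim_i r_i = 0$, which forces $n_i \to \infty$), or $\mathsf a = \mathsf a_j$ (when $(n_i, k_i)$ is eventually constant), or $\mathsf a(\pi, \tau) = \mathsf a_j(\pi, \tau)$ by the explicit recipe (when $n_i \to n$ and $k_i \to \infty$). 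In each case $\mathsf a_j \in \mathcal R_A$ forces the root sequent of $\mathsf a(\pi, \tau)$ to be $\Gamma \Rightarrow \Delta$, so $\mathsf a \in \mathcal R_A$.

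The only real subtlety, and it is mild, is the last verification: that the ``$A$-removing'' condition survives the limiting construction of Proposition \ref{SphCom}. Conceptually this is because the condition only constrains the root sequent of the output, which is already visible in the main fragment, so $\mathcal R_A$ is closed in $\mathcal F_2$ and spherical completeness transfers automatically.
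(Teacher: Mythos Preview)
Your argument is correct and matches the paper's approach: the paper exhibits essentially the same cut-based witness for non-emptiness (returning the first argument $\pi$ rather than a fixed $\sigma_0$ on non-cut-pairs, an immaterial variation) and for spherical completeness simply declares the proof ``analogous to Proposition~\ref{SphCom}'', which is precisely your strategy of rerunning that construction and verifying the limit remains $A$-removing. One small caveat on your closing sentence: closedness of $\mathcal R_A$ in $\mathcal F_2$ does not by itself make spherical completeness transfer in general ultrametric spaces (for instance $\mathbb C_p$ sits closed inside its spherical completion without being spherically complete), so the explicit case-by-case check you gave is the real content and the ``automatic'' remark should be dropped.
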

\begin{proof}
The proof of spherical completeness of the space $(\mathcal R_A,l_2)$ is analoguos to the proof of Proposition \ref{SphCom}.

We only need to check that the set $\mathcal R_A$ is non-empty. Consider the mapping $\mathsf u_{cut}:\mathcal P^2\to\mathcal P$ that is defined as follows. For a cut pair $(\pi,\tau)$ with the cut formula $A$, it joins the $\infty$-proofs $\pi$ and $\tau$ with an appropriate instance of the rule $(\mathsf{cut})$ . For all other pairs, the mapping $\mathsf u_{cut}$ returns the first argument.

Clearly, $\mathsf u_{cut}$ is non-expansive and therefore lies in $\mathcal R_A$.
\end{proof}

In what follows, we use nonexpansive adequate mappings $ \mathsf{wk}_{\Pi, \Sigma}$, $\mathsf{li}_{A\to B}$, $\mathsf{ri}_{A\to B}$, $\mathsf{i}_{A\to B}$, $\mathsf{i}_\bot$, $\mathsf{li}_{\Box A}$, $\mathsf{acl}_{p}$, $\mathsf{acr}_{p}$ from Lemma \ref{strongweakening}, Lemma \ref{inversion}, and Lemma \ref{weakcontraction}. 

\begin{lem}\label{repadeq}
For any atomic proposition $p$, there exists an adequate $p$-removing mapping $\mathsf {re}_p$.
\end{lem}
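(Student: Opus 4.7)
The plan is to apply the Prie\ss-Crampe fixed point theorem (Theorem~\ref{fixpoint}) to the non-empty spherically complete ultrametric space $(\mathcal R_p, l_2)$ provided by Lemma~\ref{Comp R_A}. I will define a strictly contractive operator $\mathsf U_p\colon\mathcal R_p\to\mathcal R_p$ implementing one step of atomic-cut reduction, take $\mathsf{re}_p$ to be its unique fixed point, and verify adequacy afterwards.

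The operator $\mathsf U_p$ sends $\mathsf a\in\mathcal R_p$ to a mapping that acts on a pair $(\pi,\tau)$ by case analysis on the last rule of $\pi$. If $(\pi,\tau)$ is not a cut pair with cut formula $p$, I set $\mathsf U_p(\mathsf a)(\pi,\tau):=\pi$. Otherwise, writing $\Gamma\Rightarrow\Delta$ for the cut result, I distinguish the following cases. When $\pi$ is initial with $p$ appearing only as a side formula on the right (or $\bot$ on the left), the cut result $\Gamma\Rightarrow\Delta$ is itself an initial sequent and becomes the output. When $\pi$ is $\Gamma',p\Rightarrow\Delta,p$ with the tracked right $p$ matched by a $p$ on the left, the output is $\mathsf{acl}_p(\tau)$, whose conclusion is $\Gamma',p\Rightarrow\Delta$ by Lemma~\ref{weakcontraction}. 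When $\pi$ ends with $\to_L$, $\to_R$, $\mathsf{refl}$, or $\mathsf{cut}$, the cut formula $p$ remains only as a side formula in each premise, so the output reapplies the same rule, filling each premise by a recursive invocation $\mathsf a(\pi_i,\chi_i)$, where $\chi_i$ is obtained from $\tau$ by the corresponding strongly admissible auxiliary rule ($\mathsf{li}_{A\to B}$, $\mathsf{ri}_{A\to B}$, $\mathsf{i}_{A\to B}$, or the appropriate $\mathsf{wk}_{\Pi,\Sigma}$ in the cut case). When $\pi$ ends with $\Box$, with premises $\pi_1\colon\Gamma',\Box\Pi\Rightarrow A,\Delta',p$ and $\pi_2\colon\Box\Pi\Rightarrow A$, the output applies $\Box$ with left premise $\mathsf a(\pi_1,\mathsf{li}_{\Box A}(\tau))$ and right premise $\pi_2$. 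Since $p$ is atomic it is never principal outside of initial sequents, so no principal-cut case arises and no case analysis on $\tau$ is needed.

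The central obstacle is verifying strict contractivity, i.e.\ $\mathsf a\sim_{n,k}\mathsf b\Rightarrow\mathsf U_p(\mathsf a)\sim_{n,k+1}\mathsf U_p(\mathsf b)$. When the output ends in a non-$\Box$ rule, each recursive argument pair $(\pi_i,\chi_i)$ has strictly smaller combined local height than $(\pi,\tau)$, since the premises of $\pi$ strictly decrease it and the strongly admissible auxiliary operations do not increase it (Lemmata~\ref{strongweakening}, \ref{inversion}, \ref{weakcontraction}). Hence under the hypothesis $\lvert\pi\rvert+\lvert\tau\rvert<k+1$ one gets $\lvert\pi_i\rvert+\lvert\chi_i\rvert<k$, so the recursive outputs agree on $(n+1)$-fragments by $\mathsf a\sim_{n,k}\mathsf b$, and adding the same rule on top preserves this agreement; unconditional $n$-fragment agreement is immediate. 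When the output ends in $\Box$, the $n$-fragment of $\mathsf U_p(\mathsf a)(\pi,\tau)$ consists of the terminal $\Box$ together with the $n$-fragment of the left recursive premise and the $(n-1)$-fragment of $\pi_2$; the required agreement transfers directly from $\mathsf a\sim_n\mathsf b$, and the strict-improvement clause is handled by the analogous local-height bookkeeping on the left premise. Non-expansiveness of $\mathsf U_p(\mathsf a)$ is checked by a uniform argument of the same shape, using that the case split is determined by the $n$-fragment of $\pi$ for $n\geq 1$. Prie\ss-Crampe then yields the unique fixed point $\mathsf{re}_p\in\mathcal R_p$.

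Adequacy of $\mathsf{re}_p$, i.e.\ $\mathsf{re}_p(\pi,\tau)\in\mathcal P_n$ whenever $\pi,\tau\in\mathcal P_n$, is proved by induction on $\lvert\pi\rvert+\lvert\tau\rvert$ with $n$ fixed, unfolding the fixed-point equation $\mathsf{re}_p=\mathsf U_p(\mathsf{re}_p)$. The key points are that the auxiliary rules of Section~\ref{SecAdm} are adequate (so all inversions and contractions of $\tau$ remain in $\mathcal P_n$), that in the $\Box$ case $\pi\in\mathcal P_n$ supplies $\pi_2\in\mathcal P_{n-1}$, which is precisely what keeps the $n$-fragment of the output cut-free, and that the $\mathsf{cut}$ branch of the case analysis cannot occur for $n\geq 1$, since $\pi\in\mathcal P_n$ forbids a cut at the root.
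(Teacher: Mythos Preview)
Your proposal is correct, but it takes a heavier route than the paper. The paper observes that in every recursive clause the first argument strictly drops in local height (each immediate subproof $\pi_i$ of $\pi$ satisfies $\lvert\pi_i\rvert<\lvert\pi\rvert$, and the right premise of $\Box$ is copied unchanged rather than fed back in), so $\mathsf{re}_p$ can be defined outright by ordinary induction on $\lvert\pi\rvert$; non-expansiveness and adequacy are then checked by the same induction. No appeal to Prie\ss-Crampe, strict contractivity, or the metric $l_2$ is needed for the atomic case. Your fixed-point construction yields the very same function (your $\mathsf U_p$ unfolds to the paper's recursion), and your contractivity and adequacy bookkeeping is sound; the payoff is uniformity with the genuinely coinductive $\Box B$ case in Lemma~\ref{reboxadeq}, at the cost of deploying machinery that the well-founded recursion on $\lvert\pi\rvert$ makes unnecessary here.
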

\begin{proof}

Assume we have two $\infty$-proofs $\pi$ and $\tau$. If the pair $(\pi,\tau)$ is not a cut pair or is a cut pair with the cut formula being not $p$, then we put $\mathsf{re}_{p}(\pi,\tau)=\pi$. 
Otherwise, we define $\mathsf{re}_{p}(\pi,\tau)$ by induction on $\lvert \pi\rvert$. Let the cut result of  the pair $(\pi,\tau)$ be $\Gamma\Rightarrow \Delta$.

If $\lvert \pi\rvert=0$, then $\Gamma\Rightarrow \Delta, p$ is an initial sequent. Suppose that $\Gamma\Rightarrow \Delta$ is also an initial sequent. Then $\mathsf{re}_{p}(\pi,\tau)$ is defined as the $\infty$-proof consisting only of the sequent $\Gamma\Rightarrow \Delta$. If $\Gamma\Rightarrow \Delta$ is not an initial sequent, then $\Gamma$ has the form $p,\Phi$, and $\tau$ is an $\infty$-proof of the sequent $p,p,\Phi \Rightarrow \Delta$. Applying the non-expansive adequate mapping $\mathsf{acl}_p$ from Lemma \ref{weakcontraction}, we put $\mathsf{re}_{p}(\pi,\tau) := \mathsf{acl}_p (\tau)$.   

Now suppose that $\lvert \pi \rvert >0$. We define $\mathsf{re}_{p}(\pi,\tau)$ according to the last application of an inference rule in $\pi$:  
\begin{align*}
\left(
\AXC{$\pi_0$}
\noLine
\UIC{$\Gamma,B\Rightarrow C,\Sigma,p$}
\LeftLabel{$\mathsf{\to_R}$}
\UIC{$\Gamma\Rightarrow B\to C,\Sigma,p$}
\DisplayProof
,\tau
\right)
&\longmapsto
\AXC{$\mathsf{re}_p(\pi_0, \mathsf{i}_{B \to C}(\tau))$}
\noLine
\UIC{$\Gamma,B\Rightarrow C,\Sigma$}
\LeftLabel{$\mathsf{\to_R}$}
\RightLabel{ ,}
\UIC{$\Gamma\Rightarrow B\to C,\Sigma$}
\DisplayProof
\\\\
\left(
\AXC{$\pi_0$}
\noLine
\UIC{$\Sigma, C\Rightarrow \Delta, p$}
\AXC{$\pi_1$}
\noLine
\UIC{$\Sigma \Rightarrow B,\Delta, p$}
\LeftLabel{$\mathsf{\to_L}$}
\BIC{$\Sigma, B\to C\Rightarrow \Delta, p$}
\DisplayProof
,\tau
\right)
&\longmapsto
\AXC{$\mathsf{re}_p (\pi_0, \mathsf{li}_{B\to C} (\tau))$}
\noLine
\UIC{$\Sigma, C\Rightarrow \Delta$}
\AXC{$\mathsf{re}_p (\pi_1, \mathsf{ri}_{B\to C} (\tau))$}
\noLine
\UIC{$\Sigma \Rightarrow B,\Delta$}
\LeftLabel{$\mathsf{\to_L}$}
\RightLabel{ ,}
\BIC{$\Sigma, B\to C\Rightarrow \Delta$}
\DisplayProof
\\\\
\left(
\AXC{$\pi_0$}
\noLine
\UIC{$\Sigma,B,\Box B\Rightarrow \Delta,p$}
\LeftLabel{$\mathsf{refl}$}
\UIC{$\Sigma,\Box B\Rightarrow \Delta,p$}
\DisplayProof
,\tau
\right)
&\longmapsto
\AXC{$\mathsf{re}_p(\pi_0, \mathsf{wk}_{ B, \emptyset} (\tau)$}
\noLine
\UIC{$\Sigma,B,\Box B\Rightarrow \Delta$}
\LeftLabel{$\mathsf{refl}$}
\RightLabel{ ,}
\UIC{$\Sigma,\Box B\Rightarrow \Delta$}
\DisplayProof
\\\\
\left(
\AXC{$\pi_0$}
\noLine
\UIC{$\Gamma\Rightarrow B,\Delta, p$}
\AXC{$\pi_1$}
\noLine
\UIC{$\Gamma,B \Rightarrow \Delta, p$}
\LeftLabel{$\mathsf{cut}$}
\BIC{$\Gamma\Rightarrow \Delta, p$}
\DisplayProof
,\tau
\right)
&\longmapsto
\AXC{$\mathsf{re}_p (\pi_0, \mathsf{wk}_{\emptyset,B} (\tau))$}
\noLine
\UIC{$\Gamma\Rightarrow B,\Delta$}
\AXC{$\mathsf{re}_p (\pi_1, \mathsf{wk}_{B,\emptyset} (\tau))$}
\noLine
\UIC{$\Gamma, B\Rightarrow \Delta$}
\LeftLabel{$\mathsf{cut}$}
\RightLabel{ ,}
\BIC{$\Gamma\Rightarrow \Delta$}
\DisplayProof
\\\\
\left(
\AXC{$\pi_0$}
\noLine
\UIC{$\Phi, \Box \Pi \Rightarrow B, \Sigma, p$}
\AXC{$\pi_1$}
\noLine
\UIC{$\Box \Pi \Rightarrow B$}
\LeftLabel{$\mathsf{\Box}$}
\BIC{$\Phi, \Box \Pi \Rightarrow \Box B, \Sigma, p$}
\DisplayProof
,\tau\right)
&\longmapsto
\AXC{$\mathsf{re}_p(\pi_0, \mathsf{li}_{\: \Box B}(\tau))$}
\noLine
\UIC{$\Phi, \Box \Pi \Rightarrow B, \Sigma$}
\AXC{$\pi_1$}
\noLine
\UIC{$\Box \Pi \Rightarrow B$}
\LeftLabel{$\mathsf{\Box}$}
\RightLabel{ .}
\BIC{$\Phi, \Box \Pi \Rightarrow \Box B, \Sigma$}
\DisplayProof
\end{align*}

The mapping $\mathsf{re}_p$ is well defined. 

Now we claim that $\mathsf{re}_p$ is non-expansive. It sufficient to check that for any pairs $(\pi,\tau)$ and $(\pi^\prime,\tau^\prime)$, and any $n\in\mathbb N$ we have 
\[\pi\sim_n\pi',\; \tau\sim_n\tau'\Rightarrow  \mathsf{re}_p(\pi,\tau)\sim_n \mathsf{re}_p(\pi^\prime,\tau^\prime).\]

Assume we have two pairs of $\infty$-proofs $(\pi,\tau)$ and $(\pi^\prime,\tau^\prime)$ such that $\pi\sim_n\pi'$, $\tau\sim_n\tau'$.
If $n=0$, then trivially $ \mathsf{re}_p(\pi,\tau)\sim_n \mathsf{re}_p(\pi^\prime,\tau^\prime)$. If $n>0$, then main fragments of $\pi$ and $\pi^\prime$ ($\tau$ and $\tau^\prime$) are identical.
Suppose that $(\pi,\tau)$ is not a cut pair or is a cut pair with the cut formula being not $p$. Then the same condition holds for the pair $(\pi^\prime,\tau^\prime)$. In this case, we have 
\[ \mathsf{re}_p(\pi,\tau) = \pi \sim_n \pi^\prime =  \mathsf{re}_p(\pi^\prime,\tau^\prime).\] 

Otherwise, $(\pi,\tau)$ and $(\pi^\prime,\tau^\prime)$ 
are cut pairs with the cut formula $p$.
Now the statement $\mathsf{re}_p(\pi,\tau)\sim_n \mathsf{re}_p(\pi^\prime,\tau^\prime)$ is proved by induction on $\lvert \pi\rvert=  \lvert \pi^\prime\rvert$ with the case analysis according to the definition of $\mathsf{re}_p$. 

If the last inference in $\pi$ is an application of the rule $(\mathsf{\to_R})$, then $\pi$ and $\pi^\prime$ have the following forms
\[\AXC{$\pi_0$}
\noLine
\UIC{$\Gamma,B\Rightarrow C,\Sigma,p$}
\LeftLabel{$\mathsf{\to_R}$}
\UIC{$\Gamma\Rightarrow B\to C,\Sigma,p$}
\DisplayProof\qquad
\AXC{$\pi^\prime_0$}
\noLine
\UIC{$\Gamma,B\Rightarrow C,\Sigma,p$}
\LeftLabel{$\mathsf{\to_R}$}
\RightLabel{ ,}
\UIC{$\Gamma\Rightarrow B\to C,\Sigma,p$}
\DisplayProof
\]
where $\pi_0 \sim_n \pi^\prime_0$. Since $\mathsf{i}_{B \to C}$ is non-expansive, we have $\mathsf{i}_{B \to C}(\tau) \sim_n \mathsf{i}_{B \to C}(\tau^\prime)$. Consequently, by the induction hypothesis for pairs $(\pi_0, \mathsf{i}_{B \to C}(\tau))$ and $(\pi^\prime_0, \mathsf{i}_{B \to C}(\tau^\prime))$, we have $\mathsf{re}_p(\pi_0, \mathsf{i}_{B \to C}(\tau)) \sim_n \mathsf{re}_p(\pi^\prime_0, \mathsf{i}_{B \to C}(\tau))$. Thus we obtain $\mathsf{re}_p(\pi,\tau)\sim_n \mathsf{re}_p(\pi^\prime,\tau^\prime)$.

Consider the case when the last inference in $\pi$ is an application of the rule $(\mathsf{\Box})$. We have that $\pi$ and $\pi^\prime$ have the following forms
\[\AXC{$\pi_0$}
\noLine
\UIC{$\Phi, \Box \Pi \Rightarrow B, \Sigma, p$}
\AXC{$\pi_1$}
\noLine
\UIC{$\Box \Pi \Rightarrow B$}
\LeftLabel{$\mathsf{\Box}$}
\BIC{$\Phi, \Box \Pi \Rightarrow \Box B, \Sigma, p$}
\DisplayProof\qquad
\AXC{$\pi^\prime_0$}
\noLine
\UIC{$\Phi, \Box \Pi \Rightarrow B, \Sigma, p$}
\AXC{$\pi^\prime_1$}
\noLine
\UIC{$\Box \Pi \Rightarrow B$}
\LeftLabel{$\mathsf{\Box}$}
\RightLabel{ ,}
\BIC{$\Phi, \Box \Pi \Rightarrow \Box B, \Sigma, p$}
\DisplayProof
\] 
where $\pi_0 \sim_n \pi^\prime_0$ and $\pi_1 \sim_{n-1} \pi^\prime_1$. Since $\mathsf{li}_{\: \Box B}$ is non-expansive, we have $\mathsf{li}_{\: \Box B}(\tau) \sim_n \mathsf{li}_{\: \Box B}(\tau^\prime)$. Consequently, by the induction hypothesis for pairs $(\pi_0, \mathsf{li}_{\: \Box B}(\tau))$ and $(\pi^\prime_0, \mathsf{li}_{\: \Box B}(\tau^\prime))$, we have $\mathsf{re}_p(\pi_0, \mathsf{li}_{\: \Box B}(\tau)) \sim_n \mathsf{re}_p(\pi^\prime_0, \mathsf{li}_{\: \Box B}(\tau))$. Thus we obtain $\mathsf{re}_p(\pi,\tau)\sim_n \mathsf{re}_p(\pi^\prime,\tau^\prime)$.

All the other cases are treated similarly, so we omit them. We have that the mapping $\mathsf{re}_p$ is non-expansive.
It remains to check that the mapping $\mathsf{re}_p$ is adequate.
 
Assume we have a pair of $\infty$-proofs $(\pi,\tau)$ such that $\pi \in \mathcal{P}_n$ and $\tau\in\mathcal{P}_n $. We claim that $\mathsf{re}_p(\pi,\tau) \in \mathcal{P}_n$. If $n=0$, then trivially $ \mathsf{re}_p(\pi,\tau) \in \mathcal{P}_0=\mathcal{P}$. If the pair $(\pi,\tau)$ is not a cut pair or is a cut pair with the cut formula being not $p$, then we have $ \mathsf{re}_p(\pi,\tau) = \pi \in \mathcal{P}_n$. 

Now suppose $(\pi,\tau)$ is a cut pair with the cut formula $p$ and $n>0$. We proceed by induction on $\lvert \pi\rvert$ with the case analysis according to the definition of $\mathsf{re}_p$. Let us consider the cases of inference rules $(\mathsf{\to_L})$ and $(\mathsf{\Box})$. 

If the last inference in $\pi$ is an application of the rule $(\mathsf{\to_L})$, then $\pi$ has the following form
\[
\AXC{$\pi_0$}
\noLine
\UIC{$\Sigma, C\Rightarrow \Delta, p$}
\AXC{$\pi_1$}
\noLine
\UIC{$\Sigma \Rightarrow B,\Delta, p$}
\LeftLabel{$\mathsf{\to_L}$}
\BIC{$\Sigma, B\to C\Rightarrow \Delta, p$}
\DisplayProof
\]
where $\pi_0, \pi_1 \in \mathcal{P}_n$. Since $\mathsf{li}_{B \to C}$ and $\mathsf{ri}_{B \to C}$ are  adequate, we have $\mathsf{li}_{B \to C}(\tau)\in \mathcal{P}_n$ and $\mathsf{ri}_{B \to C}(\tau) \in \mathcal{P}_n$. Consequently, by the induction hypothesis for the pairs $(\pi_0, \mathsf{li}_{B \to C}(\tau))$ and $(\pi_1, \mathsf{ri}_{B \to C}(\tau))$, we have $\mathsf{re}_p(\pi_0, \mathsf{li}_{B \to C}(\tau))\in \mathcal{P}_n$ and $\mathsf{re}_p(\pi_1, \mathsf{ri}_{B \to C}(\tau))\in \mathcal{P}_n$. Thus we obtain $\mathsf{re}_p(\pi,\tau)\in \mathcal{P}_n$.

Consider the case when the last inference in $\pi$ is an application of the rule $(\mathsf{\Box})$.
Then $\pi$ has the form
\[\AXC{$\pi_0$}
\noLine
\UIC{$\Phi, \Box \Pi \Rightarrow B, \Sigma, p$}
\AXC{$\pi_1$}
\noLine
\UIC{$\Box \Pi \Rightarrow B$}
\LeftLabel{$\mathsf{\Box}$}
\RightLabel{ ,}
\BIC{$\Phi, \Box \Pi \Rightarrow \Box B, \Sigma, p$}
\DisplayProof
\] 
where $\pi_0 \in \mathcal{P}_n$ and $\pi_1 \in \mathcal{P}_{n-1}$. Since $\mathsf{li}_{\: \Box B}$ is adequate, we have $\mathsf{li}_{\: \Box B}(\tau) \in \mathcal{P}_n$. Consequently, by the induction hypothesis for the pair $(\pi_0, \mathsf{li}_{\: \Box B}(\tau))$, we have $\mathsf{re}_p(\pi_0, \mathsf{li}_{\: \Box B}(\tau)) \in \mathcal{P}_n$. Thus we obtain $\mathsf{re}_p(\pi,\tau)\in \mathcal{P}_n$.

Notice that the last inference in $\pi$ differs from an application of the rule $(\mathsf{cut})$. The remaining cases of inference rules $(\to_\mathsf{R})$ and $(\mathsf{refl})$ are treated in a similar way to the case of $(\to_\mathsf{L})$, so we omit them.

\end{proof}

\begin{lem}\label{reboxadeq}
Given an adequate $B$-removing mapping $\mathsf{re}_B$, there exists an adequate $\Box B$-removing mapping $\mathsf{re}_{\Box B}$.
\end{lem}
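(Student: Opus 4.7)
The strategy is to obtain $\mathsf{re}_{\Box B}$ as the unique fixed point of a strictly contractive operator $\mathsf U_{\Box B}\colon\mathcal R_{\Box B}\to\mathcal R_{\Box B}$, invoking the Prie\ss-Crampe fixed-point theorem (Theorem \ref{fixpoint}) together with the spherical completeness of $(\mathcal R_{\Box B},l_2)$ established in Lemma \ref{Comp R_A}. For a candidate $\mathsf{a}\in\mathcal R_{\Box B}$, the value $\mathsf U_{\Box B}(\mathsf{a})(\pi,\tau)$ is defined by primary recursion on $\lvert\pi\rvert$ with a secondary recursion on $\lvert\tau\rvert$ activated in the distinguished case below. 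If $(\pi,\tau)$ is not a cut pair with cut formula $\Box B$, set $\mathsf U_{\Box B}(\mathsf{a})(\pi,\tau):=\pi$. For a $\Box B$-cut pair in which $\pi$ does not end in an application of $(\Box)$ principal on $\Box B$, follow the case analysis of Lemma \ref{repadeq}, applying the inversion operators from Lemma \ref{inversion} to $\tau$ and recursing on strictly smaller instances of $\mathsf U_{\Box B}(\mathsf{a})$.

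The distinguished case is when
\begin{gather*}
\AXC{$\pi_0$}
\noLine
\UIC{$\Gamma',\Box\Pi\Rightarrow B,\Delta$}
\AXC{$\pi_1$}
\noLine
\UIC{$\Box\Pi\Rightarrow B$}
\LeftLabel{$\Box$}
\BIC{$\Gamma',\Box\Pi\Rightarrow\Box B,\Delta$}
\DisplayProof
\end{gather*}
so that $\Gamma=\Gamma',\Box\Pi$ and $\tau:\Box B,\Gamma\Rightarrow\Delta$. Here we sub-analyse the last rule of $\tau$. If $\tau$ ends with $(\mathsf{refl})$ principal on $\Box B$ with premise $\tau_0:B,\Box B,\Gamma\Rightarrow\Delta$, we put
\begin{gather*}
\mathsf U_{\Box B}(\mathsf{a})(\pi,\tau):=\mathsf{re}_B\bigl(\pi_0,\;\mathsf{a}\bigl(\mathsf{wk}_{B,\emptyset}(\pi),\,\tau_0\bigr)\bigr).
\end{gather*}
All other sub-cases ($\tau$ an axiom, or ending in a rule non-principal on $\Box B$) are handled by the secondary recursion on $\lvert\tau\rvert$ along the pattern of Lemma \ref{repadeq}; when $\tau$ ends in $(\Box)$ with principal formula $\Box C\in\Delta$, the output is constructed as a fresh $(\Box)$-inference whose right premise is an $\mathsf{a}$-call on the right premise of $\tau$, so that dependence of the output on $\mathsf{a}$ along that branch passes through an additional right premise of $(\Box)$.

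Once $\mathsf U_{\Box B}$ is defined, three things must be verified: (i) $\mathsf U_{\Box B}(\mathsf{a})\in\mathcal R_{\Box B}$, that is non-expansiveness and the $\Box B$-removing property, proved exactly as in Lemma \ref{repadeq}; (ii) $\mathsf U_{\Box B}$ preserves adequacy, since no case introduces a $(\mathsf{cut})$ in the $n$-fragment that was not already present in $\mathsf{a}$ or $\mathsf{re}_B$; and (iii) $\mathsf U_{\Box B}$ is strictly contractive, i.e.\ $\mathsf{a}\sim_{n,k}\mathsf{b}$ implies $\mathsf U_{\Box B}(\mathsf{a})\sim_{n,k+1}\mathsf U_{\Box B}(\mathsf{b})$. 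After this, Theorem \ref{fixpoint} delivers the required fixed point $\mathsf{re}_{\Box B}$, and restricting the entire argument to the spherically complete subspace of adequate $\Box B$-removing mappings gives its adequacy.

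The main obstacle is (iii). The claim is proved by lexicographic induction on $(\lvert\pi\rvert,\lvert\tau\rvert)$, and the critical step is the $(\mathsf{refl})$-principal sub-case above, where $\mathsf{a}$ is invoked directly. Because $\tau$ ends with $(\mathsf{refl})$ we have $\lvert\tau_0\rvert<\lvert\tau\rvert$, and by Lemma \ref{strongweakening} also $\lvert\mathsf{wk}_{B,\emptyset}(\pi)\rvert\leqslant\lvert\pi\rvert$; whenever $\lvert\pi\rvert+\lvert\tau\rvert<k+1$, these estimates combine to give $\lvert\mathsf{wk}_{B,\emptyset}(\pi)\rvert+\lvert\tau_0\rvert<k$, so the assumption $\mathsf{a}\sim_{n,k}\mathsf{b}$ yields $(n+1)$-equivalence of the inner $\mathsf{a}$-call, and non-expansiveness of $\mathsf{re}_B$ propagates this to the outputs. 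In the $(\Box)$-in-$\tau$ sub-case, the extra $(\Box)$-rule inserted into the output itself shifts the $n$-equivalence of the $\mathsf{a}$-call to $(n+1)$-equivalence of the output, absorbing the lack of a height decrease on the right premise of $\tau$.
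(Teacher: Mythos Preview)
Your approach is essentially the paper's: obtain $\mathsf{re}_{\Box B}$ as the fixed point of a strictly contractive operator on $\mathcal R_{\Box B}$. One stylistic difference is that the paper's operator $\mathsf G_{\Box B}$ performs \emph{no} internal recursion on $(\lvert\pi\rvert,\lvert\tau\rvert)$: in every case it does a single step of case analysis and then immediately calls the parameter $\mathsf u$ on the subderivations. This makes the contractiveness proof a flat case check rather than a lexicographic induction. Your internal recursion is not wrong, but it is unnecessary and the paper's version is simpler.

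There are, however, two genuine gaps in your sketch. First, the $(\Box)$-in-$\tau$ sub-case is underspecified. You say the right premise of the fresh $(\Box)$-inference is ``an $\mathsf a$-call on the right premise of $\tau$'', but the first argument of that $\mathsf a$-call must be an $\infty$-proof of a sequent of the shape $\Box\Pi''\Rightarrow\Box B,\, C$ with only boxed antecedents, since it has to cut against (a weakening of) $\tau_1\colon\Box B,\Box\Pi'\Rightarrow C$ and the result must serve as a right premise of $(\Box)$. No inversion or weakening of $\pi$ itself has this shape. The paper builds the needed proof $\pi'$ by applying a \emph{new} instance of $(\Box)$ to $\pi_1$ (the right premise of $\pi$), with $\pi_1$ reused as its own right premise and $\mathsf{wk}_{\Box\Pi'\setminus\Box\Pi,\,C}(\pi_1)$ as the left premise. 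This recycling of $\pi_1$ is the key combinatorial move here, and your sketch omits it.

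Second, your adequacy argument does not go through as stated. Restricting Theorem~\ref{fixpoint} to ``the spherically complete subspace of adequate $\Box B$-removing mappings'' requires that subspace to be non-empty, and you give no reason why it should be: the only explicit member of $\mathcal R_{\Box B}$ available a priori is $\mathsf u_{\mathsf{cut}}$, which is not adequate, and exhibiting any adequate $\Box B$-removing map is essentially the statement being proved. The paper avoids this circularity by introducing a graded notion of $(n,k)$-adequacy, showing that $\mathsf G_{\Box B}$ carries $(n,k)$-adequate maps to $(n,k{+}1)$-adequate ones, and noting that every map is trivially $(0,0)$-adequate; a double induction on $(n,k)$ then yields adequacy of the fixed point. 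The same device works for your $\mathsf U_{\Box B}$, but the bare subspace shortcut does not.
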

\begin{proof}
Assume we have an adequate $B$-removing mapping $\mathsf{re}_B$. 
The required $\Box B$-removing mapping $\mathsf{re}_{\Box B}$ is obtained as the fixed-point of a contractive operator $\mathsf G_{\Box B} \colon \mathcal R_{\Box B} \to \mathcal R_{\Box B}$. 

For a mapping $\mathsf u\in \mathcal R_{\Box B}$ and a pair of $\infty$-proofs $(\pi,\tau)$, the $\infty$-proof $\mathsf G_{\Box B}(\mathsf u)(\pi,\tau)$ is defined as follows. 
If $(\pi,\tau)$ is not a cut pair or a cut pair with 
the cut formula being not $\Box B$, then we put $\mathsf G_{\Box B}(\mathsf u)(\pi,\tau)=\pi$.



Now let $(\pi,\tau)$ be a cut pair with 
the cut formula $\Box B$ and the cut result $\Gamma\Rightarrow \Delta$.
If $\lvert \pi\rvert=0$ or $\lvert \tau \rvert=0$, then $\Gamma\Rightarrow \Delta$ is an initial sequent. In this case, we define $\mathsf G_{\Box B}(\mathsf u)(\pi,\tau)$ as the $\infty$-proof consisting only of the sequent $\Gamma\Rightarrow \Delta$. 

Suppose that $\lvert \pi\rvert>0$ and $\lvert \tau \rvert>0$. We define $\mathsf G_{\Box B}(\mathsf u)(\pi,\tau)$ according to the last application of an inference rule in $\pi$:\\
\begin{align*}
\left(
\AXC{$\pi_0$}
\noLine
\UIC{$\Gamma,C\Rightarrow D,\Sigma,\Box B$}
\LeftLabel{$\mathsf{\to_R}$}
\UIC{$\Gamma\Rightarrow C\to D,\Sigma,\Box B$}
\DisplayProof
,\tau
\right)
&\longmapsto
\AXC{$\mathsf{u}(\pi_0, \mathsf{i}_{C \to D}(\tau))$}
\noLine
\UIC{$\Gamma,C\Rightarrow D,\Sigma$}
\LeftLabel{$\mathsf{\to_R}$}
\RightLabel{ ,}
\UIC{$\Gamma\Rightarrow C\to D,\Sigma$}
\DisplayProof
\\\\
\left(
\AXC{$\pi_0$}
\noLine
\UIC{$\Sigma, D\Rightarrow \Delta, \Box B$}
\AXC{$\pi_1$}
\noLine
\UIC{$\Sigma \Rightarrow C,\Delta, \Box B$}
\LeftLabel{$\mathsf{\to_L}$}
\BIC{$\Sigma, C\to D\Rightarrow \Delta, \Box B$}
\DisplayProof
,\tau
\right)
&\longmapsto
\AXC{$\mathsf{u} (\pi_0, \mathsf{li}_{C\to D} (\tau))$}
\noLine
\UIC{$\Sigma, D\Rightarrow \Delta$}
\AXC{$\mathsf{u} (\pi_1, \mathsf{ri}_{C\to D} (\tau))$}
\noLine
\UIC{$\Sigma \Rightarrow C,\Delta$}
\LeftLabel{$\mathsf{\to_L}$}
\RightLabel{ ,}
\BIC{$\Sigma, C\to D\Rightarrow \Delta$}
\DisplayProof
\end{align*}
\begin{align*}
\left(
\AXC{$\pi_0$}
\noLine
\UIC{$\Sigma,C,\Box C\Rightarrow \Delta,\Box B$}
\LeftLabel{$\mathsf{refl}$}
\UIC{$\Sigma,\Box C\Rightarrow \Delta,\Box B$}
\DisplayProof
,\tau
\right)
&\longmapsto
\AXC{$\mathsf{u}(\pi_0, \mathsf{wk}_{ C, \emptyset} (\tau)$}
\noLine
\UIC{$\Sigma,C,\Box C\Rightarrow \Delta$}
\LeftLabel{$\mathsf{refl}$}
\RightLabel{ ,}
\UIC{$\Sigma,\Box C\Rightarrow \Delta$}
\DisplayProof
\\\\
\left(
\AXC{$\pi_0$}
\noLine
\UIC{$\Gamma\Rightarrow C,\Delta, \Box B$}
\AXC{$\pi_1$}
\noLine
\UIC{$\Gamma,C \Rightarrow \Delta, \Box B$}
\LeftLabel{$\mathsf{cut}$}
\BIC{$\Gamma\Rightarrow \Delta, \Box B$}
\DisplayProof
,\tau
\right)
&\longmapsto
\AXC{$\mathsf{u} (\pi_0, \mathsf{wk}_{\emptyset,C} (\tau))$}
\noLine
\UIC{$\Gamma\Rightarrow C,\Delta$}
\AXC{$\mathsf{u} (\pi_1, \mathsf{wk}_{C,\emptyset} (\tau))$}
\noLine
\UIC{$\Gamma, C\Rightarrow \Delta$}
\LeftLabel{$\mathsf{cut}$}
\RightLabel{ ,}
\BIC{$\Gamma\Rightarrow \Delta$}
\DisplayProof
\\\\
\left(
\AXC{$\pi_0$}
\noLine
\UIC{$\Phi, \Box \Pi \Rightarrow C, \Sigma, \Box B$}
\AXC{$\pi_1$}
\noLine
\UIC{$\Box \Pi \Rightarrow C$}
\LeftLabel{$\mathsf{\Box}$}
\BIC{$\Phi, \Box \Pi \Rightarrow \Box C, \Sigma, \Box B$}
\DisplayProof
,\tau\right)
&\longmapsto
\AXC{$\mathsf{u}(\pi_0, \mathsf{li}_{\: \Box C}(\tau))$}
\noLine
\UIC{$\Phi, \Box \Pi \Rightarrow C, \Sigma$}
\AXC{$\pi_1$}
\noLine
\UIC{$\Box \Pi \Rightarrow C$}
\LeftLabel{$\mathsf{\Box}$} 
\RightLabel{ .}
\BIC{$\Phi, \Box \Pi \Rightarrow \Box C, \Sigma$}
\DisplayProof
\end{align*}

Consider the case when $\pi$ has the form 
\[
\AXC{$\pi_0$}
\noLine
\UIC{$\Phi, \Box \Pi \Rightarrow B, \Sigma$}
\AXC{$\pi_1$}
\noLine
\UIC{$\Box \Pi \Rightarrow B$}
\LeftLabel{$\mathsf{\Box}$}
\RightLabel{ .}
\BIC{$\Phi, \Box \Pi \Rightarrow \Box B, \Sigma$}
\DisplayProof
\]
We define $\mathsf G_{\Box B}(\mathsf u)(\pi,\tau)$ according to the last application of an inference rule in $\tau$. 
If the last inference is an application of the rule $\mathsf{(refl)}$ with the principal formula being not $\Box B$ or is an application of the rule $\mathsf{(\Box)}$ without the formula $\Box B$ in the right premise, then $\mathsf G_{\Box B}(\mathsf u)(\pi,\tau)$ is defined similarly to the previous cases of $\mathsf{(refl)}$ and $(\mathsf{\Box})$. Cases of inference rules $\mathsf{(\to_L)}$, $\mathsf{(\to_R)}$, and $\mathsf{(cut)}$ are also completely similar to the previous cases of $\mathsf{(\to_L)}$, $\mathsf{(\to_R)}$, and $\mathsf{(cut)}$.



If the last application of an inference rule in $\tau$ is an application of the rule $\mathsf{(refl)}$ with the principal formula $\Box B$, then we put
\[\mathsf G_{\Box B}(\mathsf u) \colon
\left(\pi,
\AXC{$\tau_0$}
\noLine
\UIC{$\Gamma,B,\Box B\Rightarrow \Delta$}
\LeftLabel{$\mathsf{refl}$}
\UnaryInfC{$\Gamma,\Box B\Rightarrow \Delta$}
\DisplayProof
\right)
\longmapsto\mathsf{re}_{B}(\pi_0,\mathsf{u}(\mathsf{wk}_{B,\emptyset}(\pi),\tau_0)).
\]

It remains to consider the case when $\tau$ has the form
\[
\AxiomC{$\tau_0$}
\noLine
\UnaryInfC{$\Phi^\prime, \Box B, \Box \Pi^\prime \Rightarrow C, \Sigma^\prime$}
\AxiomC{$\tau_1$}
\noLine
\UnaryInfC{$\Box B,\Box \Pi^\prime \Rightarrow C$}
\LeftLabel{$\mathsf{\Box}$}
\RightLabel{.}
\BinaryInfC{$\Phi^\prime, \Box B, \Box \Pi^\prime \Rightarrow \Box C, \Sigma^\prime$}
\DisplayProof
\]
We define $\mathsf G_{\Box B}(\mathsf u)(\pi,
\tau)$ as
\[
\AxiomC{$\mathsf{u}(\mathsf{li}_{\Box C}(\pi),\tau_0)$}
\noLine
\UnaryInfC{$\Phi^\prime, \Box \Pi^\prime \Rightarrow C, \Sigma^\prime$}
\AxiomC{$
\mathsf{u}
\left(\pi',
\mathsf{wk}_{\Box\Pi\backslash\Box\Pi^\prime,\varnothing}(\tau_1)
\right)$}
\noLine
\UnaryInfC{$\Box\Pi\backslash \Box\Pi^\prime, \Box\Pi^\prime\Rightarrow C$}
\LeftLabel{$\mathsf{\Box}$}
\RightLabel{ ,}
\BinaryInfC{$\Phi^\prime, \Box \Pi^\prime \Rightarrow \Box C, \Sigma^\prime$}
\DisplayProof
\]
where $\pi'$ equals to
\[
\AxiomC{$\mathsf{wk}_{\Box\Pi^\prime\backslash\Box\Pi,C}(\pi_1)$}
\noLine
\UnaryInfC{$\Box\Pi^\prime\backslash \Box\Pi, \Box\Pi\Rightarrow B,C$}
\AxiomC{$\pi_1$}
\noLine
\UnaryInfC{$\Box\Pi\Rightarrow B$}
\LeftLabel{$\mathsf{\Box}$}
\RightLabel{ .}
\BinaryInfC{$\Box\Pi^\prime\backslash \Box\Pi,\Box\Pi\Rightarrow\Box B, C$}
\DisplayProof
\]
Notice that $\Box\Pi^\prime\backslash \Box\Pi,\Box\Pi=\Box\Pi\backslash \Box\Pi^\prime,\Box\Pi^\prime$.

Now the operator $\mathsf {G_{\Box B}}$ is well-defined.
By the case analysis according to the definition of $\mathsf {G_{\Box B}}$, we see that $\mathsf {G_{\Box B}} (\mathsf u) $ is non-expansive and belongs to $\mathcal R_{\Box B}$ whenever $\mathsf u \in \mathcal R_{\Box B}$.

We claim that $\mathsf {G_{\Box B}}$ is contractive. It sufficient to check that for any $\mathsf u, \mathsf v\in \mathcal R_{\Box B}$ and any $n,k\in\mathbb N$ we have 
\[\mathsf u\sim_{n,k}\mathsf v \Rightarrow \mathsf {G_{\Box B}(u)}\sim_{n,k+1}\mathsf{G_{\Box B}(v)}.\]


Assume there are two $\Box B$-removing mappings $\mathsf u$ and $\mathsf v$ such that $\mathsf u\sim_{n,k}\mathsf v$. Consider an arbitrary pair of $\infty$-proofs $(\pi,\tau)$. By the case analysis according to the definition of $\mathsf {G_{\Box B}}$, we prove that $\mathsf G_{\Box B}(\mathsf u)(\pi,\tau)\sim_{n}\mathsf G_{\Box B}(\mathsf v)(\pi,\tau)$. In addition, we check that if $\lvert\pi\rvert+\lvert\tau\lvert<k+1$, then $\mathsf G_{\Box B}(\mathsf u)(\pi,\tau)\sim_{n+1}\mathsf G_{\Box B}(\mathsf v)(\pi,\tau)$. 

If the pair $(\pi,\tau)$ is not a cut pair or is a cut pair with the cut formula being not $\Box B$, then we have $ \mathsf G_{\Box B}(\mathsf u)(\pi,\tau) = \pi =\mathsf G_{\Box B}(\mathsf v)(\pi,\tau)$. 
Otherwise, consider last inferences in $\pi$ and $\tau$.  

Suppose that the $\infty$-proof $\pi$ has the form
\[\AXC{$\pi_0$}
\noLine
\UIC{$\Sigma,C,\Box C\Rightarrow \Delta,\Box B$}
\LeftLabel{$\mathsf{refl}$}
\RightLabel{ .}
\UIC{$\Sigma,\Box C\Rightarrow \Delta,\Box B$}
\DisplayProof
\]
Then we have $\mathsf{u}(\pi_0, \mathsf{wk}_{ C, \emptyset} (\tau))\sim_n \mathsf{v}(\pi_0, \mathsf{wk}_{ C, \emptyset} (\tau))$. Hence, $\mathsf G_{\Box B}(\mathsf u)(\pi,\tau)\sim_{n}\mathsf G_{\Box B}(\mathsf v)(\pi,\tau)$. 

In addition, if $\lvert\pi\rvert+\lvert\tau\lvert<k+1$, then $\lvert\pi_0\rvert+\lvert\mathsf{wk}_{ C, \emptyset} (\tau)\lvert<k$. Thus, $\mathsf{u}(\pi_0, \mathsf{wk}_{ C, \emptyset} (\tau))\sim_{n+1} \mathsf{v}(\pi_0, \mathsf{wk}_{ C, \emptyset} (\tau))$. We obtain that $\mathsf G_{\Box B}(\mathsf u)(\pi,\tau)\sim_{n+1}\mathsf G_{\Box B}(\mathsf v)(\pi,\tau)$. 

Now let us consider the case when $\pi$ and $\tau$ have the forms
\[
\AXC{$\pi_0$}
\noLine
\UIC{$\Phi, \Box \Pi \Rightarrow B, \Sigma$}
\AXC{$\pi_1$}
\noLine
\UIC{$\Box \Pi \Rightarrow B$}
\LeftLabel{$\mathsf{\Box}$}
\RightLabel{ ,}
\BIC{$\Phi, \Box \Pi \Rightarrow \Box B, \Sigma$}
\DisplayProof\quad
\AXC{$\tau_0$}
\noLine
\UIC{$\Gamma,B,\Box B\Rightarrow \Delta$}
\LeftLabel{$\mathsf{refl}$}
\RightLabel{ .}
\UnaryInfC{$\Gamma,\Box B\Rightarrow \Delta$}
\DisplayProof
\]
We see that $\mathsf{u}(\mathsf{wk}_{B,\emptyset}(\pi),\tau_0) \sim_n \mathsf{v}(\mathsf{wk}_{B,\emptyset}(\pi),\tau_0)$. Since the mapping $\mathsf{re}_{B}$ is non-expansive, we have 
\[\mathsf G_{\Box B}(\mathsf u)(\pi,\tau) = \mathsf{re}_{B}(\pi_0,\mathsf{u}(\mathsf{wk}_{B,\emptyset}(\pi),\tau_0)) \sim_n \mathsf{re}_{B}(\pi_0,\mathsf{v}(\mathsf{wk}_{B,\emptyset}(\pi),\tau_0)) = \mathsf G_{\Box B}(\mathsf v)(\pi,\tau).\]

In addition, if $\lvert\pi\rvert+\lvert\tau\lvert<k+1$, then $\lvert\mathsf{wk}_{B,\emptyset}(\pi)\rvert+\lvert\tau_0\lvert<k$. Consequently, $\mathsf{u}(\mathsf{wk}_{B,\emptyset}(\pi),\tau_0) \sim_{n+1} \mathsf{v}(\mathsf{wk}_{B,\emptyset}(\pi),\tau_0)$.
Thus, we have
\[\mathsf G_{\Box B}(\mathsf u)(\pi,\tau) = \mathsf{re}_{B}(\pi_0,\mathsf{u}(\mathsf{wk}_{B,\emptyset}(\pi),\tau_0)) \sim_{n+1} \mathsf{re}_{B}(\pi_0,\mathsf{v}(\mathsf{wk}_{B,\emptyset}(\pi),\tau_0)) = \mathsf G_{\Box B}(\mathsf v)(\pi,\tau).\]

Let us consider the main case when $\pi$ and $\tau$ have the forms 
\[
\AXC{$\pi_0$}
\noLine
\UIC{$\Phi, \Box \Pi \Rightarrow B, \Sigma$}
\AXC{$\pi_1$}
\noLine
\UIC{$\Box \Pi \Rightarrow B$}
\LeftLabel{$\mathsf{\Box}$}
\RightLabel{ ,}
\BIC{$\Phi, \Box \Pi \Rightarrow \Box B, \Sigma$}
\DisplayProof\quad
\AxiomC{$\tau_0$}
\noLine
\UnaryInfC{$\Phi^\prime, \Box B, \Box \Pi^\prime \Rightarrow C, \Sigma^\prime$}
\AxiomC{$\tau_1$}
\noLine
\UnaryInfC{$\Box B,\Box \Pi^\prime \Rightarrow C$}
\LeftLabel{$\mathsf{\Box}$}
\RightLabel{ .}
\BinaryInfC{$\Phi^\prime, \Box B, \Box \Pi^\prime \Rightarrow \Box C, \Sigma^\prime$}
\DisplayProof
\]
We have $\mathsf{u}\left(\pi',\mathsf{wk}_{\Box\Pi\backslash\Box\Pi^\prime,\varnothing}(\tau_1)\right)\sim_n\mathsf{v}\left(\pi',\mathsf{wk}_{\Box\Pi\backslash\Box\Pi^\prime,\varnothing}(\tau_1)\right)$, and $\mathsf{u}(\mathsf{li}_{\Box C}(\pi),\tau_0)\sim_{n}\mathsf{v}(\mathsf{li}_{\Box C}(\pi),\tau_0)$.
Hence, $\mathsf G_{\Box B}(\mathsf u)(\pi,\tau)\sim_{n}\mathsf G_{\Box B}(\mathsf v)(\pi,\tau)$. 

If $\lvert\pi\rvert+\lvert\tau\lvert<k+1$, then
$\mathsf{u}\left(\pi',\mathsf{wk}_{\Box\Pi\backslash\Box\Pi^\prime,\varnothing}(\tau_1)\right)\sim_n\mathsf{v}\left(\pi',\mathsf{wk}_{\Box\Pi\backslash\Box\Pi^\prime,\varnothing}(\tau_1)\right)$ and $\mathsf{u}(\mathsf{li}_{\Box C}(\pi),\tau_0)\sim_{n+1}\mathsf{v}(\mathsf{li}_{\Box C}(\pi),\tau_0)$, because $\lvert \mathsf{li}_{\Box C}(\pi)\rvert +\lvert \tau_0\rvert< k$. We obtain $\mathsf G_{\Box B}(\mathsf u)(\pi,\tau)\sim_{n+1}\mathsf G_{\Box B}(\mathsf v)(\pi,\tau)$.




All the other cases of lowermost inferences in $\pi$ and $\tau$ are treated similarly, so we omit them. We have that the operator $\mathsf {G_{\Box B}}$ is contractive. 

Now we define the required $\Box B$-removing mapping $\mathsf{re}_{\Box B}$ as the fixed-point of the the operator $\mathsf G_{\Box B} \colon \mathcal R_{\Box B} \to \mathcal R_{\Box B}$, which exists by Lemma \ref{Comp R_A} and Theorem \ref{fixpoint}.


It remains to check that the mapping $\mathsf{re}_{\Box B}$ is adequate. For some $n,k\in\mathbb N$, let us call a mapping $\mathsf u\in \mathcal R_{\Box B}$ $(n,k)$-adequate if it satisfies the following two conditions: $\mathsf u(\pi,\tau)\in\mathcal P_i$ for any $i\leqslant n$ and any $\pi, \tau \in \mathcal P_i$; $\mathsf u(\pi,\tau)\in\mathcal P_{n+1}$ whenever $\pi, \tau \in \mathcal P_{n+1}$ and $\lvert \pi\rvert + \lvert \tau \rvert <k$. 

We claim that $\mathsf G_{\Box B}(\mathsf u)$ is $(n,k+1)$-adequate whenever $\mathsf{u}$ is a $(n,k)$-adequate mapping from $\mathcal R_{\Box B}$. Consider an arbitrary pair of $\infty$-proofs $(\pi,\tau)$ and an $(n,k)$-adequate mapping $\mathsf u\in\mathcal R_{\Box B}$. By the case analysis according to the definition of $\mathsf {G_{\Box B}}$, we prove that $\mathsf G_{\Box B}(\mathsf u)(\pi,\tau)\in \mathcal{P}_i$ if $i\leqslant n$ and $\pi, \tau \in \mathcal P_i$. In addition, we check that $\mathsf G_{\Box B}(\mathsf u)(\pi,\tau)\in \mathcal P_{n+1}$ if $\lvert\pi\rvert+\lvert\tau\lvert<k+1$ and $\pi, \tau \in \mathcal P_{n+1}$. 

If the pair $(\pi,\tau)$ is not a cut pair or is a cut pair with the cut formula being not $\Box B$, then we have $ \mathsf G_{\Box B}(\mathsf u)(\pi,\tau) = \pi $. In this case, the required statements trivially hold.
Otherwise, consider last inferences in $\pi$ and $\tau$.  

Suppose that the $\infty$-proof $\pi$ has the form
\[\AXC{$\pi_0$}
\noLine
\UIC{$\Gamma,C\Rightarrow D,\Sigma,\Box B$}
\LeftLabel{$\mathsf{\to_R}$}
\RightLabel{ .}
\UIC{$\Gamma\Rightarrow C\to D,\Sigma,\Box B$}
\DisplayProof
\]
Since the mapping $\mathsf{i}_{ C\to D}$ is adequate, we have $\mathsf{u}(\pi_0, \mathsf{i}_{ C\to D} (\tau))\in \mathcal{P}_i$ whenever $i\leqslant n$ and $\pi, \tau \in \mathcal P_i$. In this case, we see that $\mathsf G_{\Box B}(\mathsf u)(\pi,\tau)\in  \mathcal P_{i}$.

If $\lvert\pi\rvert+\lvert\tau\lvert<k+1$, then $\lvert\pi_0\rvert+\lvert\mathsf{i}_{ C\to D}(\tau)\lvert<k$. Thus, $\mathsf{u}(\pi_0, \mathsf{i}_{ C\to D} (\tau))\in  \mathcal P_{n+1}$ whenever $\pi, \tau \in \mathcal P_{n+1}$. We obtain that $\mathsf G_{\Box B}(\mathsf u)(\pi,\tau)\in \mathcal P_{n+1}$. 

Suppose that $\pi$ and $\tau$ have the forms 
\[
\AXC{$\pi_0$}
\noLine
\UIC{$\Phi, \Box \Pi \Rightarrow B, \Sigma$}
\AXC{$\pi_1$}
\noLine
\UIC{$\Box \Pi \Rightarrow B$}
\LeftLabel{$\mathsf{\Box}$}
\RightLabel{ ,}
\BIC{$\Phi, \Box \Pi \Rightarrow \Box B, \Sigma$}
\DisplayProof\quad
\AxiomC{$\tau_0$}
\noLine
\UnaryInfC{$\Phi^\prime, \Box B, \Box \Pi^\prime \Rightarrow C, \Sigma^\prime$}
\AxiomC{$\tau_1$}
\noLine
\UnaryInfC{$\Box B,\Box \Pi^\prime \Rightarrow C$}
\LeftLabel{$\mathsf{\Box}$}
\RightLabel{ .}
\BinaryInfC{$\Phi^\prime, \Box B, \Box \Pi^\prime \Rightarrow \Box C, \Sigma^\prime$}
\DisplayProof
\]
Trivially, $\mathsf G_{\Box B}(\mathsf u)(\pi,\tau)\in \mathcal P_{0} = \mathcal{P}$. Thus consider the case when $\pi, \tau \in \mathcal{P}_i$ for some $0<i \leqslant n$. Then we see that $\pi_0, \tau_0 \in \mathcal{P}_i$ and $\pi_1, \tau_1 \in \mathcal{P}_{i-1}$. 
Since the mapping $\mathsf{wk}_{\Box\Pi\backslash\Box\Pi^\prime,\varnothing}$ and $\mathsf{li}_{\Box C}$ are adequate, we have $\mathsf{u}(\mathsf{li}_{\Box C}(\pi),\tau_0)\in \mathcal{P}_i$ and $\mathsf{u}\left(\pi',\mathsf{wk}_{\Box\Pi\backslash\Box\Pi^\prime,\varnothing}(\tau_1)\right)\in \mathcal{P}_{i-1}$.
Hence, $\mathsf G_{\Box B}(\mathsf u)(\pi,\tau)\in \mathcal{P}_i$. 

If $\lvert\pi\rvert+\lvert\tau\lvert<k+1$ and $\pi,\tau \in \mathcal{P}_{n+1}$, then
$\lvert \mathsf{li}_{\Box C}(\pi) \rvert + \lvert \tau_0 \rvert <k$ and $\mathsf{u}(\mathsf{li}_{\Box C}(\pi),\tau_0)\in \mathcal{P}_{n+1}$. Also, we see that $\pi'\in  \mathcal{P}_n$ and $\mathsf{wk}_{\Box\Pi\backslash\Box\Pi^\prime,\varnothing}(\tau_1) \in \mathcal{P}_n$. Hence, $\mathsf{u}\left(\pi',\mathsf{wk}_{\Box\Pi\backslash\Box\Pi^\prime,\varnothing}(\tau_1)\right)\in \mathcal{P}_{n}$.
We obtain $\mathsf G_{\Box B}(\mathsf u)(\pi,\tau)\in \mathcal{P}_{n+1}$.

We omit other cases of lowermost inferences in $\pi$ and $\tau$, because they are treated similarly.


We established that $\mathsf G_{\Box B}(\mathsf u)$ is $(n,k+1)$-adequate for any $(n,k)$-adequate $\mathsf u\in\mathcal R_{\Box B}$. Notice that if a mapping $\mathsf{u} $ is $(n,k)$-adequate for all $k\in\mathbb N$, then it is also $(n+1,0)$-adequate. Now by induction on $n$ with a subinduction on $k$, we immediately obtain that the mapping $\mathsf {re}_{\Box B}$, which is a fixed-point of $\mathsf G_{\Box B}$, is $(n,k)$-adequate for all $n,k\in\mathbb N$. Therefore the mapping $\mathsf {re}_{\Box B}$ is adequate. 


\end{proof}

\begin{lem}\label{reabadeq}
For any formula $A$, there exists an adequate $A$-removing mapping $\mathsf{re}_A$.
\end{lem}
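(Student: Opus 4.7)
The plan is induction on the structure of the formula $A$. The base cases are handled as follows: for $A=p$ atomic, Lemma \ref{repadeq} supplies the desired mapping; for $A=\bot$, we set $\mathsf{re}_\bot(\pi,\tau):=\mathsf{i}_\bot(\pi)$ whenever $(\pi,\tau)$ is a cut pair with cut formula $\bot$ and $\mathsf{re}_\bot(\pi,\tau):=\pi$ otherwise, with non-expansiveness and adequacy inherited from $\mathsf{i}_\bot$ by Lemma \ref{inversion}. The case $A=\Box B$ is also immediate: the induction hypothesis yields an adequate $B$-removing mapping $\mathsf{re}_B$, and Lemma \ref{reboxadeq} applied to it produces $\mathsf{re}_{\Box B}$.

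The only substantive case is $A=B\to C$, where the induction hypothesis supplies adequate removing mappings $\mathsf{re}_B$ and $\mathsf{re}_C$. The construction of $\mathsf{re}_{B\to C}$ mirrors Lemma \ref{reboxadeq}: we exhibit it as the fixed-point of a strictly contractive operator $\mathsf{G}_{B\to C}\colon\mathcal{R}_{B\to C}\to\mathcal{R}_{B\to C}$. On pairs that are not cut pairs with cut formula $B\to C$, the operator returns the first argument. On a cut pair $(\pi,\tau)$ with cut formula $B\to C$ we case-analyze the last inference in $\pi$. Whenever this rule does not have $B\to C$ principal on the right, the cut is pushed into $\pi$'s subproofs by combining $\mathsf{u}$ with the appropriate admissible mapping ($\mathsf{wk}_{\Pi,\Sigma}$, $\mathsf{li}_{D\to E}$, $\mathsf{ri}_{D\to E}$, $\mathsf{i}_{D\to E}$, $\mathsf{li}_{\Box D}$), exactly along the clauses appearing in Lemma \ref{repadeq} and Lemma \ref{reboxadeq}. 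If $\pi$ ends with $(\mathsf{\to_R})$ introducing $B\to C$, we case-analyze $\tau$, pushing the cut into $\tau$'s subproofs unless $\tau$ ends with $(\mathsf{\to_L})$ principal on $B\to C$. In this critical clause, writing
\[
\pi=\dfrac{\pi_0\colon\Gamma,B\Rightarrow C,\Delta}{\Gamma\Rightarrow B\to C,\Delta},\qquad\tau=\dfrac{\tau_0\colon\Gamma,C\Rightarrow\Delta\quad\tau_1\colon\Gamma\Rightarrow B,\Delta}{\Gamma,B\to C\Rightarrow\Delta},
\]
we define
\[
\mathsf{G}_{B\to C}(\mathsf{u})(\pi,\tau):=\mathsf{re}_C\bigl(\mathsf{re}_B\bigl(\mathsf{wk}_{\emptyset,C}(\tau_1),\pi_0\bigr),\tau_0\bigr),
\]
thereby reducing the cut on $B\to C$ to two successive cuts on the strictly simpler formulas $B$ and $C$, both eliminable by the induction hypothesis.

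The remaining verifications---that $\mathsf{G}_{B\to C}(\mathsf{u})$ lies in $\mathcal{R}_{B\to C}$, that $\mathsf{G}_{B\to C}$ is strictly contractive, and that its fixed-point $\mathsf{re}_{B\to C}$ (supplied by Lemma \ref{Comp R_A} and Theorem \ref{fixpoint}) is adequate---are carried out by exactly the same case analysis and $(n,k)$-adequacy induction as in Lemma \ref{reboxadeq}, using that the strongly admissible mappings do not increase local height. The main obstacle, and the only genuinely new ingredient compared with Lemma \ref{reboxadeq}, lies in the critical clause above: contractiveness there is immediate since the clause does not recurse through $\mathsf{u}$, but one must verify that the nested outer call $\mathsf{re}_C(\cdots)$ preserves both non-expansiveness in $(\pi_0,\tau_0,\tau_1)$ and adequacy (landing in the correct $\mathcal{P}_i$), which reduces by composition to the already established non-expansiveness and adequacy of $\mathsf{re}_B$, $\mathsf{re}_C$, and $\mathsf{wk}_{\emptyset,C}$.
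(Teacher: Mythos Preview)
Your proof is correct, and for the atomic, $\bot$, and $\Box$ cases it matches the paper exactly. For $A=B\to C$, however, you take a genuinely different route. You build $\mathsf{re}_{B\to C}$ as the fixed point of a contractive operator on $\mathcal{R}_{B\to C}$, doing a full case analysis on the last rules of $\pi$ and $\tau$ in the style of Lemma~\ref{reboxadeq}. The paper instead exploits that implication, unlike $\Box$, is fully invertible on both sides (Lemma~\ref{inversion}): it simply sets
\[
\mathsf{re}_{B \to C}(\pi,\tau):=\mathsf{re}_C\bigl(\mathsf{re}_B\bigl(\mathsf{wk}_{\emptyset, C}(\mathsf{ri}_{B\to C}(\tau)),\,\mathsf{i}_{B\to C}(\pi)\bigr),\,\mathsf{li}_{B\to C}(\tau)\bigr),
\]
which is exactly your ``critical clause'' applied globally via the inversion maps rather than only after both principal rules have surfaced. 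Non-expansiveness and adequacy then follow immediately by composition, with no fixed-point argument, no case analysis, and no $(n,k)$-adequacy induction. Your approach buys uniformity with the $\Box B$ case; the paper's approach buys a one-line construction and makes clear that the coinductive machinery is needed only for $\Box$, where the right premise of the modal rule blocks direct inversion.
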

\begin{proof}

We define $\mathsf{re}_A$ by induction on the structure of the formula $A $.



Case 1: $A$ has the form $p$. In this case, $\mathsf{re}_{p} $ is defined by Lemma \ref{repadeq}.
 
Case 2: $A$ has the form $\bot$. Then we put $\mathsf{re}_{\bot}(\pi,\tau):= \mathsf{i}_\bot (\pi)$, where $\mathsf{i}_\bot$ is a non-expansive adequate mapping from Lemma \ref{inversion}.

Case 3: $A$ has the form $B\to C$. Then we put    
\[\mathsf{re}_{B \to C}(\pi,\tau):=\mathsf{re}_C(\mathsf{re}_B(\mathsf{wk}_{\emptyset, C}(\mathsf{ri}_{B\to C}(\tau)),\mathsf{i}_{B\to C}(\pi)),\mathsf{li}_{B\to C}(\tau))\,\]
where $\mathsf{ri}_{B \to C}$, $\mathsf{i}_{B \to C}$, $\mathsf{li}_{B \to C}$ are non-expansive adeqate mappings from Lemma \ref{inversion} and $\mathsf{wk}_{\emptyset, C}$ is a non-expansive adequate mapping from Lemma \ref{strongweakening}.

Case 4: $A$ has the form $\Box B$. By the induction hypothesis, there is an adequate $B$-removing mapping $\mathsf{re}_B$. The required $\Box B$-removing mapping $\mathsf{re}_{\Box B}$ exists by Lemma \ref{reboxadeq}.
\end{proof}

A mapping $\mathsf{u}\colon \mathcal P \to\mathcal P$ is called \emph{root-preserving} if it maps $\infty$-proofs to $\infty$-proofs of the same sequents. Let $\mathcal{T}$ denote the set of all root-preserving non-expansive mappings from $\mathcal P$ to $\mathcal P$. 

\begin{lem}\label{Comp T}
The pair $(\mathcal T, l_1)$ is a non-empty spherically complete ultrametric space.
\end{lem}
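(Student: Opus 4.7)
The plan is to obtain both claims by reducing to what was already established for $(\mathcal F_1, l_1)$. First I would exhibit an element of $\mathcal T$: the identity mapping $\mathrm{id} \colon \mathcal P \to \mathcal P$ is clearly root-preserving and non-expansive (in fact an isometry), so $\mathcal T$ is non-empty. Since $\mathcal T \subseteq \mathcal F_1$, the metric $l_1$ restricts to an ultrametric on $\mathcal T$.

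For spherical completeness I would invoke Proposition \ref{sphcomp} and reproduce the argument of Proposition \ref{SphCom}. Given a chain $\mathsf{a}_0 \sim_{n_0,k_0} \mathsf{a}_1 \sim_{n_1,k_1} \cdots$ in $\mathcal T$ with non-increasing radii $r_i = 2^{-n_i-1} + 2^{-n_i-k_i-1}$, Proposition \ref{SphCom} already supplies some $\mathsf{a} \in \mathcal F_1$ satisfying $\mathsf{a} \sim_{n_i,k_i} \mathsf{a}_i$ for every $i$; the only thing I need to add is that this $\mathsf{a}$ is root-preserving, so that $\mathsf{a} \in \mathcal T$. The verification runs through the same three subcases as in the proof of Proposition \ref{SphCom}. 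When $\lim_{i \to \infty} r_i = 0$, for every $\pi \in \mathcal P$ we can pick an index $i$ with $n_i \geq 1$; then $\mathsf{a}(\pi) \sim_1 \mathsf{a}_i(\pi)$ means they share the same main fragment, in particular the same root sequent, which since $\mathsf{a}_i \in \mathcal T$ equals the root of $\pi$. When $\lim r_i > 0$ and $\lim k_i$ is finite, the sequence $(n_i, k_i)$ is eventually constant and $\mathsf{a} = \mathsf{a}_j$ for some $j$, so it inherits root-preservation directly. When $\lim r_i > 0$ and $\lim k_i = +\infty$, the value $\mathsf{a}(\pi)$ is defined pointwise as $\mathsf{a}_j(\pi)$ for some $j$ depending on $\pi$, so it again has the right root.

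The main thing to watch is that root-preservation is an extrinsic condition not directly encoded in the metric, so \emph{a priori} the limit construction from Proposition \ref{SphCom} might leave $\mathcal T$; the case analysis above shows it does not, because root-preservation is determined pointwise by the $1$-fragment of the output and hence is stable under $\sim_1$. No new fixed-point or contraction machinery is needed beyond what Proposition \ref{SphCom} provides.
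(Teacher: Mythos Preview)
Your proposal is correct and follows essentially the same approach as the paper: the paper's proof simply says that spherical completeness is ``analogous to the proof of Proposition~\ref{SphCom}'' and that the identity function witnesses non-emptiness. You have unpacked exactly what ``analogous'' entails here, namely rerunning the three-case construction of Proposition~\ref{SphCom} and checking that the resulting limit remains root-preserving, which is the only additional constraint that $\mathcal T$ imposes over $\mathcal F_1$.
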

\begin{proof}
The proof of spherical completeness of the space $(\mathcal T, l_1)$ is analogous to the proof of Proposition \ref{SphCom}.
The space is obviously non-empty, since the identity function lies in $\mathcal T$.
\end{proof}



\begin{thm}[cut-elimination]
\label{infcuttoinf}
If $\mathsf{Grz_\infty} +\mathsf{cut}\vdash\Gamma\Rightarrow\Delta$, then $\mathsf{Grz_\infty}\vdash\Gamma\Rightarrow\Delta$.
\end{thm}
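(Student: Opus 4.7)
The plan is to obtain the cut-elimination mapping as a fixed point of a suitable strictly contractive operator on the spherically complete ultrametric space $(\mathcal T, l_1)$, in the same Prie\ss-Crampe style used for the auxiliary $\mathsf{re}_{\Box B}$ in Lemma \ref{reboxadeq}. Concretely, I would define an operator $\mathsf F\colon \mathcal T\to\mathcal T$ that performs one ``top-level'' cut-elimination step: given $\mathsf t\in\mathcal T$ and an $\infty$-proof $\pi$, the value $\mathsf F(\mathsf t)(\pi)$ is obtained by inspecting the last rule of $\pi$. If $\pi$ is initial, return $\pi$; if $\pi$ ends with a non-cut rule $\rho$ applied to premises $\pi_0,\dotsc,\pi_j$, apply $\rho$ to $\mathsf t(\pi_0),\dotsc,\mathsf t(\pi_j)$; if $\pi$ ends with $(\mathsf{cut})$ having premises $\pi_0,\pi_1$ and cut formula $A$, return $\mathsf{re}_A(\mathsf t(\pi_0),\mathsf t(\pi_1))$, using the adequate $A$-removing mapping from Lemma \ref{reabadeq}.

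Next I would verify that $\mathsf F$ indeed maps $\mathcal T$ into $\mathcal T$ (root-preservation follows from the root-preservation of $\mathsf t$ and of $\mathsf{re}_A$; non-expansiveness follows by case analysis using non-expansiveness of $\mathsf t$ and of $\mathsf{re}_A$), and that $\mathsf F$ is strictly contractive, i.e.\ $\mathsf t\sim_{n,k}\mathsf t'$ implies $\mathsf F(\mathsf t)\sim_{n,k+1}\mathsf F(\mathsf t')$. The crucial input is that every recursive call is made on a premise of $\pi$, so in the bound $\lvert\pi\rvert\leqslant k$ the premises satisfy $\lvert\pi_i\rvert<k$; in the cut and non-$(\Box)$ cases this lets us use the $\sim_{n+1}$ part of $\sim_{n,k}$ on the premises, and then non-expansiveness of $\mathsf{re}_A$ or of the rule propagates $\sim_{n+1}$ to the output. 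In the $(\Box)$ case one additionally uses that the right premise contributes only an $n$-fragment to the $(n+1)$-fragment of the conclusion, so $\sim_n$ on $\mathsf t(\pi_1)$ suffices there. By Lemma \ref{Comp T} and Theorem \ref{fixpoint} the operator $\mathsf F$ then has a unique fixed point $\mathsf E\in\mathcal T$.

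It remains to show that $\mathsf E(\pi)$ is cut-free for every $\pi\in\mathcal P$, i.e.\ $\mathsf E(\pi)\in\mathcal P_n$ for all $n$. I would prove this by induction on $n$ with a subinduction on $\lvert\pi\rvert$, always unfolding $\mathsf E(\pi)=\mathsf F(\mathsf E)(\pi)$. The base $n=0$ is trivial since $\mathcal P_0=\mathcal P$. For the inductive step, the initial and non-cut, non-$(\Box)$ cases are immediate from the inner IH. In the $(\Box)$ case, the right premise $\mathsf E(\pi_1)$ is only required to lie in $\mathcal P_n$, which is given by the outer IH, while the left premise lies in $\mathcal P_{n+1}$ by the inner IH; combining them with $(\Box)$ keeps us in $\mathcal P_{n+1}$. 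In the $(\mathsf{cut})$ case both $\mathsf E(\pi_0)$ and $\mathsf E(\pi_1)$ lie in $\mathcal P_{n+1}$ by the inner IH, and adequacy of $\mathsf{re}_A$ (Lemma \ref{reabadeq}) delivers $\mathsf{re}_A(\mathsf E(\pi_0),\mathsf E(\pi_1))\in\mathcal P_{n+1}$. Since $\mathsf E$ is root-preserving, $\mathsf E(\pi)$ is a cut-free $\infty$-proof of the same sequent as $\pi$, proving the theorem.

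I expect the main obstacle to be the contractivity verification, because the $(\Box)$ case is delicate: one has to carefully track how passing through the right premise of $(\Box)$ decrements the $n$-index in the $\sim_{n,k}$ relation, while the left premise retains it. A secondary point to be careful about is that $\mathsf F$ must land in $\mathcal T$, which in turn relies on the root-preservation clauses built into the definition of $\mathsf{re}_A$ and the behaviour of $\mathsf{re}_A$ on non-cut pairs.
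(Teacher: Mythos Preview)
Your proposal is correct and essentially identical to the paper's proof: the operator $\mathsf F$, the contractivity verification (including the $(\Box)$ subtlety you flag), and the appeal to Lemma~\ref{Comp T} and Theorem~\ref{fixpoint} all match. The only cosmetic difference is in the cut-freeness argument: the paper packages your double induction on $n$ and $\lvert\pi\rvert$ into an ``$(n,k)$-free'' property of mappings (namely $\mathsf u(\pi)\in\mathcal P_n$ always, and $\mathsf u(\pi)\in\mathcal P_{n+1}$ when $\lvert\pi\rvert<k$) and shows $\mathsf F$ takes $(n,k)$-free mappings to $(n,k{+}1)$-free ones, which is exactly your inner induction step rephrased.
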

\begin{proof}
We obtain the required cut-elimination mapping $\mathsf{ce}$ as the fixed-point of a contractive operator $\mathsf F \colon \mathcal T \to T$. 

For a mapping $\mathsf u\in \mathcal T$ and an $\infty$-proof $\pi$, the $\infty$-proof $\mathsf F(\mathsf u)(\pi)$ is defined as follows. If $\lvert \pi\rvert=0$, then we define $\mathsf F(\mathsf u)(\pi)$ to be $\pi$. 




Otherwise, we define $\mathsf F(\mathsf u)(\pi)$ according to the last application of an inference rule in $\pi$:
\begin{gather*}
\AXC{$\pi_1$}
\noLine
\UIC{$\Gamma , B \Rightarrow  \Delta$}
\AXC{$\pi_2$}
\noLine
\UIC{$\Gamma \Rightarrow  A, \Delta$}
\LeftLabel{$\mathsf{\rightarrow_L}$}
\BIC{$\Gamma , A \rightarrow B \Rightarrow  \Delta$}
\DisplayProof 
\longmapsto
\AXC{$\mathsf u(\pi_1)$}
\noLine
\UIC{$\Gamma , B \Rightarrow  \Delta$}
\AXC{$\mathsf u(\pi_2)$}
\noLine
\UIC{$\Gamma \Rightarrow  A, \Delta$}
\LeftLabel{$\mathsf{\rightarrow_L}$}
\RightLabel{ ,}
\BIC{$\Gamma , A \rightarrow B \Rightarrow  \Delta$}
\DisplayProof 
\end{gather*}
\begin{gather*}
\AXC{$\pi_0$}
\noLine
\UIC{$\Gamma, A \Rightarrow  B , \Delta$}
\LeftLabel{$\mathsf{\rightarrow_R}$}
\UIC{$\Gamma \Rightarrow  A \rightarrow B , \Delta$}
\DisplayProof 
\longmapsto
\AXC{$\mathsf u(\pi_0)$}
\noLine
\UIC{$\Gamma, A \Rightarrow  B , \Delta$}
\LeftLabel{$\mathsf{\rightarrow_R}$}
\RightLabel{ ,}
\UIC{$\Gamma \Rightarrow  A \rightarrow B , \Delta$}
\DisplayProof 
\end{gather*}
\begin{gather*}
\AXC{$\pi_0$}
\noLine
\UIC{$\Gamma, A, \Box A \Rightarrow   \Delta$}
\LeftLabel{$\mathsf{refl}$}
\UIC{$\Gamma , \Box A\Rightarrow  \Delta$}
\DisplayProof 
\longmapsto
\AXC{$\mathsf u(\pi_0)$}
\noLine
\UIC{$\Gamma, A, \Box A \Rightarrow   \Delta$}
\LeftLabel{$\mathsf{refl}$}
\RightLabel{ ,}
\UIC{$\Gamma , \Box A\Rightarrow  \Delta$}
\DisplayProof 
\end{gather*}
\begin{align*}
\AXC{$\pi_1$}
\noLine
\UIC{$\Gamma, \Box \Pi \Rightarrow A, \Delta$}
\AXC{$\pi_2$}
\noLine
\UIC{$\Box \Pi \Rightarrow A$}
\LeftLabel{$\mathsf{\Box}$}
\BIC{$\Gamma, \Box \Pi \Rightarrow \Box A, \Delta$}
\DisplayProof 
&\longmapsto
\AXC{$\mathsf u(\pi_1)$}
\noLine
\UIC{$\Gamma, \Box \Pi \Rightarrow A, \Delta$}
\AXC{$\mathsf{u}(\pi_2)$}
\noLine
\UIC{$\Box \Pi \Rightarrow A$}
\LeftLabel{$\mathsf{\Box}$}
\RightLabel{ .}
\BIC{$\Gamma, \Box \Pi \Rightarrow \Box A, \Delta$}
\DisplayProof\\\\
\AXC{$\pi_1$}
\noLine
\UIC{$\Gamma \Rightarrow A, \Delta$}
\AXC{$\pi_2$}
\noLine
\UIC{$\Gamma, A \Rightarrow \Delta$}
\LeftLabel{$\mathsf{cut}$}
\BIC{$\Gamma\Rightarrow  \Delta$}
\DisplayProof 
&\longmapsto
\mathsf {re}_A(\mathsf u(\pi_0),\mathsf u(\pi_1)).
\end{align*}
Now the operator $\mathsf F$ is well-defined.
By the case analysis according to the definition of $\mathsf F$, we see that $\mathsf F (\mathsf u) $ is non-expansive and belongs to $\mathcal T$ whenever $\mathsf u \in \mathcal T$.

We claim that $\mathsf {F}$ is contractive. It sufficient to check that for any $\mathsf u, \mathsf v\in \mathcal T$ and any $n,k\in\mathbb N$ we have 
\[\mathsf u\sim_{n,k}\mathsf v \Rightarrow \mathsf {F(u)}\sim_{n,k+1}\mathsf{F(v)}.\]

Assume there are mappings $\mathsf u$ and $\mathsf v$ from $\mathcal{T}$ such that $\mathsf u\sim_{n,k}\mathsf v$. Consider an arbitrary $\infty$-proof $\pi$. By the case analysis according to the definition of $\mathsf {F}$, we prove that $\mathsf F(\mathsf u)(\pi)\sim_{n}\mathsf F(\mathsf v)(\pi)$. In addition, we check that if $\lvert\pi\rvert<k+1$, then $\mathsf F(\mathsf u)(\pi)\sim_{n+1}\mathsf F(\mathsf v)(\pi)$. 

Let us consider the case when $\pi$ has the form
\[\AXC{$\pi_1$}
\noLine
\UIC{$\Gamma, \Box \Pi \Rightarrow A, \Delta$}
\AXC{$\pi_2$}
\noLine
\UIC{$\Box \Pi \Rightarrow A$}
\LeftLabel{$\mathsf{\Box}$}
\RightLabel{ .}
\BIC{$\Gamma, \Box \Pi \Rightarrow \Box A, \Delta$}
\DisplayProof 
\]
We have $\mathsf u(\pi_0) \sim_n \mathsf v(\pi_0)$ and  $\mathsf u(\pi_1) \sim_n \mathsf v(\pi_1)$. Thus, $F(\mathsf u)(\pi) \sim_n F(\mathsf v)(\pi)$.

If $\lvert\pi\rvert<k+1$, then $\lvert\pi_0\rvert<k$.
We have $\mathsf u(\pi_0) \sim_{n+1} \mathsf v(\pi_0)$ and  $\mathsf u(\pi_1) \sim_n \mathsf v(\pi_1)$.
Hence, $F(\mathsf u)(\pi) \sim_{n+1} F(\mathsf v)(\pi)$.

Now consider the case when $\pi$ has the form
\[\AXC{$\pi_1$}
\noLine
\UIC{$\Gamma \Rightarrow A, \Delta$}
\AXC{$\pi_2$}
\noLine
\UIC{$\Gamma, A \Rightarrow \Delta$}
\LeftLabel{$\mathsf{cut}$}
\RightLabel{ .}
\BIC{$\Gamma\Rightarrow  \Delta$}
\DisplayProof 
\] 
We have $\mathsf u(\pi_0) \sim_n \mathsf v(\pi_0)$ and  $\mathsf u(\pi_1) \sim_n \mathsf v(\pi_1)$.
Since the mapping $\mathsf {re}_A$ is non-expansive, we see that
\[F(\mathsf u)(\pi) = \mathsf {re}_A(\mathsf u(\pi_0),\mathsf u(\pi_1))
\sim_n \mathsf {re}_A(\mathsf v(\pi_0),\mathsf v(\pi_1))= F(\mathsf v)(\pi).\]

Moreover, if $\lvert\pi\rvert<k+1$, then $\lvert\pi_0\rvert<k$ and $\lvert\pi_1\rvert<k$. We obtain $\mathsf u(\pi_0) \sim_{n+1} \mathsf v(\pi_0)$ and  $\mathsf u(\pi_1) \sim_{n+1} \mathsf v(\pi_1)$. Hence,
\[F(\mathsf u)(\pi) = \mathsf {re}_A(\mathsf u(\pi_0),\mathsf u(\pi_1))
\sim_{n+1} \mathsf {re}_A(\mathsf v(\pi_0),\mathsf v(\pi_1))= F(\mathsf v)(\pi).\]

Other cases are straightforward, so we omit them. We have that the operator $\mathsf {F}$ is contractive. 

Now we define the required cut-elimination mapping $\mathsf{ce}$ as the fixed-point of the the operator $\mathsf F \colon \mathcal T \to \mathcal T$, which exists by Lemma \ref{Comp T} and Theorem \ref{fixpoint}.

For some $n,k\in\mathbb N$, let us call a mapping $\mathsf u\in \mathcal T$ $(n,k)$-free if it satisfies the following two conditions: $\mathsf u(\pi)\in\mathcal P_n$ for any $\pi \in \mathcal P$; $\mathsf u(\pi)\in\mathcal P_{n+1}$ whenever $\lvert \pi\rvert<k$. 


We claim that $F(\mathsf u)$ is $(n,k+1)$-free whenever $\mathsf{u}$ is a $(n,k)$-free mapping from $\mathcal T$. Consider an arbitrary $\infty$-proof $\pi$ and an $(n,k)$-free mapping $\mathsf u\in\mathcal T$. By the case analysis according to the definition of $\mathsf {F}$, we prove that $F(\mathsf u)(\pi)\in \mathcal{P}_n$ for any $\pi \in \mathcal P$. In addition, we check that $\mathsf F(\mathsf u)(\pi)\in \mathcal P_{n+1}$ if $\lvert\pi\rvert<k+1$. 

Let us consider the case when $\pi$ has the form
\[\AXC{$\pi_1$}
\noLine
\UIC{$\Gamma, \Box \Pi \Rightarrow A, \Delta$}
\AXC{$\pi_2$}
\noLine
\UIC{$\Box \Pi \Rightarrow A$}
\LeftLabel{$\mathsf{\Box}$}
\RightLabel{ .}
\BIC{$\Gamma, \Box \Pi \Rightarrow \Box A, \Delta$}
\DisplayProof 
\]
We have $\mathsf u(\pi_0) ,\mathsf u(\pi_1) \in \mathcal{P}_n$. Hence, $F(\mathsf u)(\pi) \in \mathcal{P}_n$.

If $\lvert\pi\rvert<k+1$, then $\lvert\pi_0\rvert<k$. We have $\mathsf u(\pi_0) \in \mathcal{P}_{n+1}$ and $\mathsf u(\pi_1) \in \mathcal{P}_n$. Hence, $F(\mathsf u)(\pi) \in \mathcal{P}_{n+1}$.

Now we consider the case when $\pi$ has the form
\[\AXC{$\pi_1$}
\noLine
\UIC{$\Gamma \Rightarrow A, \Delta$}
\AXC{$\pi_2$}
\noLine
\UIC{$\Gamma, A \Rightarrow \Delta$}
\LeftLabel{$\mathsf{cut}$}
\RightLabel{ .}
\BIC{$\Gamma\Rightarrow  \Delta$}
\DisplayProof 
\] 
We have $\mathsf u(\pi_0) ,\mathsf u(\pi_1) \in \mathcal{P}_n$. Since the mapping $\mathsf {re}_A$ is adequate, we see that $F(\mathsf u)(\pi) = \mathsf {re}_A(\mathsf u(\pi_0),\mathsf u(\pi_1)) \in \mathcal{P}_n$.

If $\lvert\pi\rvert<k+1$, then $\lvert\pi_0\rvert<k$ and $\lvert\pi_1\rvert<k$. We obtain $\mathsf u(\pi_0), \mathsf{u}(\pi_1) \in \mathcal{P}_{n+1}$. Hence, $F(\mathsf u)(\pi) = \mathsf {re}_A(\mathsf u(\pi_0),\mathsf u(\pi_1))\in \mathcal{P}_{n+1}$.

We omit other cases of lowermost inferences in $\pi$, because they are trivial.


We established that $\mathsf F(\mathsf u)$ is $(n,k+1)$-free for any $(n,k)$-free $\mathsf u\in\mathcal T$. Notice that if a mapping $\mathsf{u} $ is $(n,k)$-free for all $k\in\mathbb N$, then it is also $(n+1,0)$-free. Now by induction on $n$ with a subinduction on $k$, we immediately obtain that the mapping $\mathsf {ce}$, which is a fixed-point of $\mathsf F$, is $(n,k)$-free for all $n,k\in\mathbb N$. Therefore, for any $\infty$-proof $\pi$, the $\infty$-proof $\mathsf {ce}(\pi)$ does not contain instances of the rule $\mathsf {(cut)}$.



Now assume $\mathsf{Grz_\infty} +\mathsf{cut}\vdash\Gamma\Rightarrow\Delta$. Take an $\infty$-proof of the sequent $\Gamma\Rightarrow\Delta$ in the system $\mathsf{Grz_\infty} +\mathsf{cut}$ and apply the mapping $\mathsf{ce}$ to it. We obtain an $\infty$-proof of the same sequent in the system $\mathsf{Grz_\infty}$.
\end{proof}
Theorem \ref{cutelimgrz} is now established as a direct consequence of Theorem \ref{seqtoinfcut}, Theorem \ref{infcuttoinf}, and Theorem \ref{inftoseq}.

%
\section{Lyndon interpolation}
\label{SecLyn}
The Lyndon interpolation property for the logic was established in \cite{Maks} on the basis of Kripke semantics. Here we present a proof-theoretic argument for the same result. 

For a formula or a set of formulas $X$, let $\mathsf{pos}(X)$ be the set of atomic propositions that have positive occurrences in $X$ and let $\mathsf{neg}(X)$ be the set of atomic propositions with negative occurrences in $X$. For a sequent $\Gamma\Rightarrow\Delta$, let $\mathit{pos}(\Gamma\Rightarrow\Delta)=\mathit{pos}(\Delta)\cup \mathit{neg}(\Gamma)$ and $\mathit{neg}(\Gamma\Rightarrow\Delta)=\mathit{neg}(\Delta)\cup \mathit{pos}(\Gamma)$.

Recall that for a finite set of formulas $\Lambda$, the set $\Lambda^\ast$ is $\{\Box(A\to\Box A)\mid A\in\Lambda\}$.

\begin{lem} 
For any finite sets of formulas $\Lambda_1$, $\Lambda_2$, 
if $\mathsf{Grz_\infty}\vdash \Gamma_1,\Gamma_2\Rightarrow\Delta_1,\Delta_2$, then there exists a formula $I$ called an interpolant of this sequent, such that $\mathit{pos}(I)\subset \mathit{neg}(\Gamma_1\Rightarrow\Delta_1)\cap \mathit{pos}(\Gamma_2\Rightarrow\Delta_2)$, $\mathit{neg}(I)\subset \mathit{pos}(\Gamma_1\Rightarrow\Delta_1)\cap \mathit{neg}(\Gamma_2\Rightarrow\Delta_2)$, and
$$\mathsf{Grz} \vdash \bigwedge \Lambda_1^\ast \cup\Gamma_1\rightarrow\bigvee\Delta_1 \cup \{ I\} \quad \text{and} \quad\mathsf{Grz} \vdash \bigwedge \Lambda_2^\ast \cup\{ I\} \cup  \Gamma_2\rightarrow\bigvee\Delta_2\:.$$
\end{lem}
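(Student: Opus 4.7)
Following the strategy of Lemma~\ref{translation}, my plan is a double induction: the outer induction is on the measure
\begin{gather*}
\lvert Sub(\Gamma_1,\Gamma_2\Rightarrow\Delta_1,\Delta_2)\setminus\Lambda_1\rvert+\lvert Sub(\Gamma_1,\Gamma_2\Rightarrow\Delta_1,\Delta_2)\setminus\Lambda_2\rvert,
\end{gather*}
and the inner subinduction is on the local height $\lvert\pi\rvert$ of a given $\mathsf{Grz_\infty}$-proof $\pi$ of $\Gamma_1,\Gamma_2\Rightarrow\Delta_1,\Delta_2$. The interpolant is built Maehara-style by analysis on the last inference of $\pi$ and on which block of the partition holds the principal formula; derivability of the two resulting Hilbert implications is verified inside $\mathsf{Grz_{Seq}}+\mathsf{cut}$, which is equivalent to $\mathsf{Grz}$ via Lemma~\ref{prop} and Theorem~\ref{cutelimgrz}.

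For initial sequents and for the propositional rules $\mathsf{\to_L},\mathsf{\to_R},\mathsf{refl}$, the standard Maehara recipe applies: the interpolant carries unchanged through single-premise rules, and is $I_1\vee I_2$ or $I_1\wedge I_2$ through a two-premise rule according to whether the principal formula lies in $\Gamma_1\cup\Delta_1$ or in $\Gamma_2\cup\Delta_2$. The subinduction on local height suffices in each such case, and the Lyndon polarity bounds lift routinely from the premises.

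The central case is the $\mathsf{\Box}$ rule, where $\pi$ ends with
\begin{gather*}
\AXC{$\pi'$}\noLine\UIC{$\Phi,\Box\Pi\Rightarrow A,\Sigma$}
\AXC{$\pi''$}\noLine\UIC{$\Box\Pi\Rightarrow A$}
\LeftLabel{$\mathsf{\Box}$}\BIC{$\Phi,\Box\Pi\Rightarrow\Box A,\Sigma$}\DisplayProof
\end{gather*}
and, without loss of generality, $\Box A\in\Delta_1$ under the partition. If $A\in\Lambda_1$, the subinduction applied to $\pi'$ yields an interpolant $K$, and I take $I:=K$: on the first side the formula $A$ is lifted to $\Box A$ by cutting against the $\mathsf{Grz_{Seq}}$-provable sequent $\Box(A\to\Box A),A\Rightarrow\Box A$ (derivable via $\mathsf{refl}$), which is available since $\Box(A\to\Box A)\in\Lambda_1^\ast$. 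If $A\notin\Lambda_1$, the outer induction applies to $\pi''$ with the enlarged parameters $(\Lambda_1\cup\{A\},\Lambda_2)$, yielding an interpolant $J$, and I take $I:=\Diamond J$; the polarity constraints hold because $\mathit{pos}(\Diamond J)=\mathit{pos}(J)$ and symmetrically for $\mathit{neg}$.

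The main obstacle is verifying the two $\mathsf{Grz}$-derivations in the case $A\notin\Lambda_1$. For the second side, the induction hypothesis gives $\Lambda_2^\ast,J,\Box\Pi_2\Rightarrow$, reading $\mathsf{Grz}\vdash\bigwedge\Lambda_2^\ast\wedge\bigwedge\Box\Pi_2\to\neg J$; applying necessitation followed by the K- and 4-axioms (using that every formula in $\Lambda_2^\ast\cup\Box\Pi_2$ is already boxed) produces $\mathsf{Grz}\vdash\bigwedge\Lambda_2^\ast\wedge\bigwedge\Box\Pi_2\to\Box\neg J$, equivalent in sequent form to $\Lambda_2^\ast,\Diamond J,\Phi_2,\Box\Pi_2\Rightarrow\Sigma_2$. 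For the first side, the induction hypothesis $\Lambda_1^\ast,\Box(A\to\Box A),\Box\Pi_1\Rightarrow A,J$ is transformed by moving $J$ to the antecedent as $\neg J$ and applying $\mathsf{refl}$ on $\Box\neg J$ into $\Lambda_1^\ast,\Box\neg J,\Box(A\to\Box A),\Box\Pi_1\Rightarrow A$; this sequent has an entirely boxed antecedent, so the $\mathsf{\Box_{Grz}}$ rule of $\mathsf{Grz_{Seq}}$ delivers $\Lambda_1^\ast,\Box\neg J,\Phi_1,\Box\Pi_1\Rightarrow\Box A,\Sigma_1$, equivalent to $\Lambda_1^\ast,\Phi_1,\Box\Pi_1\Rightarrow\Box A,\Sigma_1,\Diamond J$. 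The symmetric case $\Box A\in\Delta_2,\,A\notin\Lambda_2$ is handled by the dual choice $I:=\Box J$ with analogous derivations.
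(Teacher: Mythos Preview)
Your proposal is correct and follows essentially the same approach as the paper: a double induction (outer on a subformula-count modulo the $\Lambda_i$, inner on local height), Maehara-style splitting for the propositional rules, and in the $\mathsf{\Box}$ case the dichotomy $A\in\Lambda_i$ versus $A\notin\Lambda_i$, with interpolant $\Diamond J$ or $\Box J$ in the latter. The only cosmetic differences are that you take the outer measure over $Sub$ of the full sequent rather than of each half (both decrease in the $\Box$ case for the same reason), and that you verify the two derivations in $\mathsf{Grz_{Seq}}+\mathsf{cut}$ via $\mathsf{\Box_{Grz}}$ whereas the paper argues directly in the Hilbert system using axiom~(v); these are equivalent by Lemma~\ref{prop}.
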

\begin{proof}
Assume $\pi$ is an $\infty$--proof of the sequent $\Gamma_1,\Gamma_2\Rightarrow\Delta_1,\Delta_2$ in $\mathsf{Grz}_\infty$ and $\Lambda_1$, $\Lambda_2$ are finite sets of formulas.
We prove the statement of the lemma by induction on the sum of cardinalities $\mathit{card}(Sub(\Gamma_1\Rightarrow\Delta_1)\backslash \Lambda_1 )+\mathit{card}(  Sub(\Gamma_2\Rightarrow\Delta_2)\backslash  \Lambda_2)$ with a subinduction on $\lvert \pi \rvert$.



Suppose $\lvert \pi \rvert=0$. Then $\Gamma_1,\Gamma_2\Rightarrow\Delta_1,\Delta_2$ is an initial sequent. 
If $\Gamma_1\Rightarrow\Delta_1$ ($\Gamma_2\Rightarrow\Delta_2$) is an initial sequent, then we put $I:= \bot$ ($I:= \top$). Otherwise, we have an atomic proposition $p$ such that $p \in \Gamma_1$ and $p\in \Delta_2$ ($p \in \Gamma_2$ and $p\in \Delta_1$). In this case, we put $I:= p$ ($I:= \neg p$).

Now consider the last application of an inference rule in $\pi$. 

Case 1. Suppose that $\pi$ has the form
\begin{gather*}
\AXC{$\pi^\prime$}
\noLine
\UIC{$\Gamma_1,\Gamma_2,A\Rightarrow B,\Sigma$}
\LeftLabel{$\mathsf{\to_R}$}
\RightLabel{ ,}
\UIC{$\Gamma_1,\Gamma_2\Rightarrow A\to B,\Sigma$}
\DisplayProof
\end{gather*}
where $A\to B,\Sigma = \Delta_1, \Delta_2$.
Notice that $\lvert \pi^\prime \rvert < \lvert \pi \rvert $. 
If $A \to B \in \Delta_1$ ($A \to B \in \Delta_2$), then we put $\Gamma^\prime_1:= \Gamma_1 \cup \{A\}$, $\Delta^\prime_1:= (\Delta_1\backslash \{A\to B\})\cup \{B\}$, $\Gamma^\prime_2:=\Gamma_2$, $\Delta^\prime_2:=\Delta_2$ ($\Gamma^\prime_2:= \Gamma_2 \cup\{A\}$, $\Delta^\prime_2:= (\Delta_2\backslash \{A\to B\})\cup\{B\}$, $\Gamma^\prime_1:=\Gamma_1$, $\Delta^\prime_1:=\Delta_1$). We see that $\pi^\prime$ is an $\infty$-proof of the sequent $\Gamma^\prime_1,\Gamma^\prime_2\Rightarrow\Delta^\prime_1,\Delta^\prime_2$ in $\mathsf{Grz}_\infty$. By the induction hypothesis for $\pi^\prime$, $\Lambda_1$ and $\Lambda_2$, there is the corresponding interpolant $I^\prime$. We set $I:=I^\prime$.

Case 2. Suppose that $\pi$ has the form
\begin{gather*}
\AXC{$\pi^\prime$}
\noLine
\UIC{$\Sigma, B\Rightarrow \Delta_1,\Delta_2$}
\AXC{$\pi^{\prime\prime}$}
\noLine
\UIC{$\Sigma \Rightarrow A,\Delta_1,\Delta_2$}
\LeftLabel{$\mathsf{\to_L}$}
\RightLabel{ ,}
\BIC{$\Sigma, A\to B\Rightarrow \Delta_1,\Delta_2$}
\DisplayProof
\end{gather*}
where $\Sigma, A\to B = \Gamma_1,\Gamma_2$. We see that $\lvert \pi^\prime \rvert < \lvert \pi \rvert $. 
If $A \to B \in \Gamma_1$ ($A \to B \in \Gamma_2$), then we put $\Gamma^\prime_1:= (\Gamma_1  \backslash \{A\to B\})\cup \{B\}$, $\Gamma^{\prime\prime}_1:= \Gamma_1  \backslash \{A\to B\}$, 
$\Delta^\prime_1:= \Delta_1  $, $\Delta^{\prime\prime}_1:= \Delta_1    \cup \{A\}$, $\Gamma^\prime_2:=\Gamma_2$, $\Gamma^{\prime\prime}_2:=\Gamma_2$,
$\Delta^\prime_2:=\Delta_2$, $\Delta^{\prime\prime}_2:=\Delta_2$
($\Gamma^\prime_2:= (\Gamma_2  \backslash \{A\to B\})\cup \{B\}$, $\Gamma^{\prime\prime}_2:= \Gamma_2  \backslash \{A\to B\}$, 
$\Delta^\prime_2:= \Delta_2  $, $\Delta^{\prime\prime}_2:= \Delta_2    \cup \{A\}$, 
$\Gamma^\prime_1:=\Gamma_1$, $\Gamma^{\prime\prime}_1:=\Gamma_1$,
$\Delta^\prime_1:=\Delta_1$, $\Delta^{\prime\prime}_1:=\Delta_1$). We see that $\pi^\prime$ and $\pi^{\prime\prime}$ are $\infty$-proofs of $\Gamma^\prime_1,\Gamma^\prime_2\Rightarrow\Delta^\prime_1,\Delta^\prime_2$ and $\Gamma^{\prime\prime}_1,\Gamma^{\prime\prime}_2\Rightarrow\Delta^{\prime\prime}_1,\Delta^{\prime\prime}_2$ in $\mathsf{Grz}_\infty$. By the induction hypothesis for $\pi^\prime$, $\Lambda_1$ and $\Lambda_2$, there is a interpolant $I^\prime$. Analogously, by the induction hypothesis for $\pi^{\prime\prime}$, $\Lambda_1$ and $\Lambda_2$, there is a interpolant 
$I^{\prime\prime}$. If $A \to B \in \Gamma_1$, then we set $I:=I^\prime \vee I^{\prime\prime}$. Otherwise, when $A \to B \in \Gamma_2$, we set $I:=I^\prime \wedge I^{\prime\prime}$.

Case 3. Suppose that $\pi$ has the form
\begin{gather*}
\AXC{$\pi^\prime$}
\noLine
\UIC{$\Sigma,A,\Box A\Rightarrow \Delta_1,\Delta_2$}
\LeftLabel{$\mathsf{refl}$}
\RightLabel{ ,}
\UIC{$\Sigma,\Box A\Rightarrow \Delta_1,\Delta_2$}
\DisplayProof
\end{gather*}
where $\Sigma, \Box A = \Gamma_1,\Gamma_2$. If $\Box A \in \Gamma_1$ ($\Box A \in \Gamma_2$), then we put $\Gamma^\prime_1:= \Gamma_1 \cup \{A\}$, $\Delta^\prime_1:= \Delta_1$, $\Gamma^\prime_2:=\Gamma_2$, $\Delta^\prime_2:=\Delta_2$ ($\Gamma^\prime_2:= \Gamma_2 \cup\{A\}$, $\Delta^\prime_2:= \Delta_2$, $\Gamma^\prime_1:=\Gamma_1$, $\Delta^\prime_1:=\Delta_1$). We see that $\pi^\prime$ is an $\infty$-proof of the sequent $\Gamma^\prime_1,\Gamma^\prime_2\Rightarrow\Delta^\prime_1,\Delta^\prime_2$ in $\mathsf{Grz}_\infty$. By the induction hypothesis for $\pi^\prime$, $\Lambda_1$ and $\Lambda_2$, there is the corresponding interpolant $I^\prime$. We set $I:=I^\prime$.
 
Case 4. Suppose that $\pi$ has the form
\begin{gather*}
\AXC{$\pi^\prime$}
\noLine
\UIC{$\Phi, \Box \Pi \Rightarrow A, \Sigma$}
\AXC{$\pi^{\prime\prime}$}
\noLine
\UIC{$\Box \Pi \Rightarrow A$}
\LeftLabel{$\mathsf{\Box}$}
\RightLabel{ ,}
\BIC{$\Phi, \Box \Pi \Rightarrow \Box A, \Sigma$}
\DisplayProof
\end{gather*}
where $\Phi, \Box \Pi = \Gamma_1, \Gamma_2$ and $\Box A, \Sigma =\Delta_1, \Delta_2$.

Subcase 4.1: the formula $\Box A \in \Delta_1$ and $A \in \Lambda_1 $ ($\Box A \in \Delta_2$ and $A \in \Lambda_2$). We see that $\lvert \pi^\prime \rvert < \lvert \pi \rvert $. 
We put $\Gamma^\prime_1:= \Gamma_1 $, $\Delta^\prime_1:= (\Delta_1 \backslash \{\Box A\}) \cup \{A\} $, $\Gamma^\prime_2:=\Gamma_2$, $\Delta^\prime_2:=\Delta_2$ ($\Gamma^\prime_2:= \Gamma_2 $, $\Delta^\prime_2:= (\Delta_2 \backslash \{\Box A\}) \cup \{A\} $, $\Gamma^\prime_1:=\Gamma_1$, $\Delta^\prime_1:=\Delta_1$). We see that $\pi^\prime$ is an $\infty$-proof of the sequent $\Gamma^\prime_1,\Gamma^\prime_2\Rightarrow\Delta^\prime_1,\Delta^\prime_2$ in $\mathsf{Grz}_\infty$. By the induction hypothesis for $\pi^\prime$, $\Lambda_1$ and $\Lambda_2$, there is the corresponding interpolant $I^\prime$ such that
$$\mathsf{Grz} \vdash \bigwedge \Lambda_1^\ast \cup\Gamma^\prime_1\rightarrow\bigvee\Delta^\prime_1 \cup \{ I^\prime\} \quad \text{and} \quad\mathsf{Grz} \vdash \bigwedge \Lambda_2^\ast \cup\{ I^\prime\} \cup  \Gamma^\prime_2\rightarrow\bigvee\Delta^\prime_2\:.$$
If $\Box A \in \Delta_1$ and $A \in \Lambda_1 $, then $\Box (A \to \Box A) \in \Lambda_1^\ast$. Thus we have $\mathsf{Grz} \vdash \bigwedge \Lambda_1^\ast \rightarrow (A \to \Box A)$. It follows that we can replace the formula $A$ in $\Delta^\prime_1$ by $\Box A$ and obtain 
$$\mathsf{Grz} \vdash \bigwedge \Lambda_1^\ast \cup\Gamma^\prime_1\rightarrow\bigvee\Delta_1 \cup \{ I^\prime \} \:.$$
Note also that $\Gamma^\prime_1:= \Gamma_1 $, $\Gamma^\prime_2:=\Gamma_2$, $\Delta^\prime_2:=\Delta_2$. Hence, 
$$\mathsf{Grz} \vdash \bigwedge \Lambda_1^\ast \cup\Gamma_1\rightarrow\bigvee\Delta_1 \cup \{ I^\prime \}\quad \text{and} \quad\mathsf{Grz} \vdash \bigwedge \Lambda_2^\ast \cup\{ I^\prime\} \cup  \Gamma_2\rightarrow\bigvee\Delta_2\:.$$
We set $I:=I^\prime$. The case  $\Box A \in \Delta_2$ and $A \in \Lambda_2 $ is analogous. 

Subcase 4.2: the formula $\Box A \in \Delta_1$ and $A \nin \Lambda_1 $ ($\Box A \in \Delta_2$ and $A \nin \Lambda_2$). We have that $\Gamma_1, \Gamma_2 = \Phi, \Box \Pi$ and $\Delta_1, \Delta_2 = \Box A, \Sigma$. We split $\Box \Pi$ into two multisets $\Box \Pi_1 \subset \Gamma_1$ and $\Box \Pi_1 \subset \Gamma_2$ such that $\Box \Pi= \Box \Pi_1, \Box \Pi_2$. Put $\Gamma^{\prime\prime}_1:= \Box \Pi_1 $, $\Delta^{\prime\prime}_1:=  \{A\} $, $\Gamma^{\prime\prime}_2:=\Box \Pi_2$, $\Delta^{\prime\prime}_2:=\emptyset$ ($\Gamma^{\prime\prime}_1:= \Box \Pi_1 $, $\Delta^{\prime\prime}_1:=  \emptyset $, $\Gamma^{\prime\prime}_2:=\Box \Pi_2$, $\Delta^{\prime\prime}_2:=\{A\}$).
We see that $\pi^{\prime\prime}$ is an $\infty$-proof of the sequent $\Gamma^{\prime\prime}_1,\Gamma^{\prime\prime}_2\Rightarrow\Delta^{\prime\prime}_1,\Delta^{\prime\prime}_2$ in $\mathsf{Grz}_\infty$.

If $\Box A \in \Delta_2$ and $A \nin \Lambda_2 $, then 
$\mathit{card}(Sub(\Box\Pi_1\Rightarrow)\backslash \Lambda_1 )+\mathit{card}(  Sub(\Box\Pi_2\Rightarrow A)\backslash  (\Lambda_2\cup \{A\})$ is strictly less than
$\mathit{card}(Sub(\Gamma_1\Rightarrow\Delta_1)\backslash \Lambda_1 )+\mathit{card}(  Sub(\Gamma_2\Rightarrow\Delta_2)\backslash  \Lambda_2)$. Hence by the induction hypothesis for $\pi^{\prime\prime}$, $\Lambda_1$ and $\Lambda_2\cup \{A\}$ 
there exists an interpolant $I^{\prime\prime}$ such that
$$\mathsf{Grz} \vdash \bigwedge \Lambda_1^\ast \cup\Box \Pi_1\rightarrow  I^{\prime\prime} \quad \text{and} \quad\mathsf{Grz} \vdash \bigwedge \Lambda_2^\ast \cup \{\Box (A \to \Box A)\}\cup\{ I^{\prime\prime}\} \cup  \Box \Pi_2\rightarrow A\:.$$

From the left condition we immediately obtain $$\mathsf{Grz} \vdash \bigwedge \Lambda_1^\ast \cup\Box \Pi_1\rightarrow  \Box I^{\prime\prime} \quad \text{and} \quad \mathsf{Grz} \vdash \bigwedge \Lambda_1^\ast \cup\Gamma_1\rightarrow  \bigvee\Delta_1 \cup\{\Box  I^{\prime\prime} \}\:.$$

From the right condition we see $$\mathsf{Grz} \vdash \bigwedge \Lambda_2^\ast  \cup\{ I^{\prime\prime}\} \cup  \Box \Pi_2\rightarrow (\Box (A \to \Box A) \to A)\:.$$
Thus, 
$$\mathsf{Grz} \vdash \bigwedge \Lambda_2^\ast  \cup\{ \Box I^{\prime\prime}\} \cup  \Box \Pi_2\rightarrow \Box (\Box (A \to \Box A) \to A)\:.$$
Applying the axiom (v) of $\mathsf{Grz} $ we see that 
$$\mathsf{Grz} \vdash \bigwedge \Lambda_2^\ast  \cup\{ \Box I^{\prime\prime}\} \cup  \Box \Pi_2\rightarrow \Box A\quad \text{and} \quad \mathsf{Grz} \vdash \bigwedge \Lambda_2^\ast  \cup\{ \Box I^{\prime\prime}\} \cup  \Gamma_2\rightarrow \bigvee\Delta_2 \:.$$
Now we can set $I:=\Box I^{\prime\prime}$.
In the case $\Box A \in \Delta_2$, $A \nin \Lambda_2$ we can analogously set $I:=\Diamond I^{\prime\prime}$.

\end{proof}

\begin{thm}[Lyndon interpolation] 
If $\mathsf{Grz} \vdash A \rightarrow B$, then there exists a formula $C$, called an interpolant of $A \rightarrow B$, such that $\mathit{pos} (C) \subset \mathit{pos} (A) \cap \mathit{pos} (B)$, $\mathit{neg} (C) \subset \mathit{neg} (A) \cap \mathit{neg} (B)$, and \[\mathsf{Grz }\vdash A \rightarrow C , \qquad \mathsf{Grz} \vdash C \rightarrow B.\] 
\end{thm}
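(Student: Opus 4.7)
The plan is to reduce the theorem to the sequent-level interpolation lemma just established, using the bridge between the Hilbert system and the non-well-founded calculus provided by Lemma \ref{prop} together with the cut-elimination results.

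First, assume $\mathsf{Grz}\vdash A\to B$. By Lemma \ref{prop} the sequent $A\Rightarrow B$ is derivable in $\mathsf{Grz_{Seq}}+\mathsf{cut}$, so by Theorem \ref{seqtoinfcut} it is derivable in $\mathsf{Grz}_\infty+\mathsf{cut}$, and by the cut-elimination theorem (Theorem \ref{infcuttoinf}) we obtain $\mathsf{Grz}_\infty\vdash A\Rightarrow B$. This puts us in a position to invoke the interpolation lemma for $\infty$-proofs.

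Next, apply the preceding lemma to the split $\Gamma_1:=\{A\}$, $\Delta_1:=\varnothing$, $\Gamma_2:=\varnothing$, $\Delta_2:=\{B\}$ with the parameter choice $\Lambda_1:=\varnothing$ and $\Lambda_2:=\varnothing$. Unwinding the definitions, we have
\[
\mathit{neg}(A\Rightarrow)\cap\mathit{pos}(\Rightarrow B)=\mathit{pos}(A)\cap\mathit{pos}(B),\qquad
\mathit{pos}(A\Rightarrow)\cap\mathit{neg}(\Rightarrow B)=\mathit{neg}(A)\cap\mathit{neg}(B),
\]
and since $\Lambda_1^\ast=\Lambda_2^\ast=\varnothing$, the two provability conditions supplied by the lemma read $\mathsf{Grz}\vdash A\to I$ and $\mathsf{Grz}\vdash I\to B$. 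The formula $C:=I$ then satisfies all the required conditions of the theorem.

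There is essentially no serious obstacle here; the only point that deserves a moment's care is the bookkeeping of positive and negative occurrences under the definitions $\mathit{pos}(\Gamma\Rightarrow\Delta)=\mathit{pos}(\Delta)\cup\mathit{neg}(\Gamma)$ and $\mathit{neg}(\Gamma\Rightarrow\Delta)=\mathit{neg}(\Delta)\cup\mathit{pos}(\Gamma)$, so that the sequent-level polarity conditions translate correctly into the formula-level conditions on $A$ and $B$. Once this match is observed, the theorem follows immediately from the lemma.
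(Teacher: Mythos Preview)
Your proposal is correct and follows essentially the same approach as the paper: reduce to the sequent calculus via Lemma~\ref{prop}, pass to $\mathsf{Grz}_\infty$ using Theorems~\ref{seqtoinfcut} and~\ref{infcuttoinf}, and then apply the preceding interpolation lemma with $\Gamma_1=\{A\}$, $\Delta_2=\{B\}$, $\Gamma_2=\Delta_1=\Lambda_1=\Lambda_2=\varnothing$. Your extra paragraph unwinding the polarity definitions is a welcome bit of detail that the paper leaves implicit.
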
 
\begin{proof}
Assume $\mathsf{Grz} \vdash A \rightarrow B$. By Lemma \ref{prop}, we have $\mathsf{Grz_{Seq}} + \mathsf{cut} \vdash A \Rightarrow B$. Applying Theorem \ref{seqtoinfcut} and Theorem \ref{infcuttoinf} we obtain $\mathsf{Grz}_\infty \vdash A \Rightarrow B$. Applying the previous lemma with $\Lambda_1=\Lambda_2=\emptyset$, $\Gamma_1= \{ A\}$, $\Delta_2=\{ B\}$, $\Gamma_2=\Delta_1=\emptyset $, we find an interpolant for $A \to B$.    
\end{proof}

\section{Cyclic proofs}
\label{SecCyc}

There exists a simple class of $\infty$-proofs that is sufficient to derive all theorems of $\mathsf{Grz}_\infty$. An $\infty$-proof is called \emph{regular} if it contains only finitely many nonisomorphic subtrees.

Regular $\infty$-proofs have useful finite representations called cyclic (circular) proofs.
A \emph{cyclic proof} of a sequent $\Gamma \Rightarrow \Delta$ is a pair $(\kappa, d)$,
where $\kappa$ is a finite tree of sequents constructed according to the rules of $\mathsf{Grz}_\infty$ with
the root marked by $\Gamma \Rightarrow \Delta$, and $d$ is a function with the following properties: the
function $d$ is defined on the set of all leaves of $\kappa$ that are not marked by initial
sequents; the image $d(a)$ of a leaf $a$ lies on the path from the root of $\kappa$ to the leaf $a$; there is a right premise of the rule $(\Box)$ between $a$ and $d(a)$; $a$ and $d(a)$ are marked by the same sequents. If the function $d$
is defined at a leaf $a$, then we say that the nodes $a$ and $d(a)$ are joined by a back-link.
Here is an example of a cyclic proof for the sequent $\Box(\Box(p \rightarrow \Box p) \rightarrow p) \Rightarrow p$: 

\begin{gather*}
\AXC{\textsf{Ax}}
\noLine
\UIC{$ F, p\Rightarrow p$}
\AXC{\textsf{Ax}}
\noLine
\UIC{$ F,p\Rightarrow \Box p,p$}
\LeftLabel{$\mathsf{\to_R}$}
\UIC{$ F \Rightarrow p\to\Box p,p$}
\AXC{\textsf{Ax}}
\noLine
\UIC{$p, F \Rightarrow p$}
\AXC{$ F \Rightarrow p $ \tikzmark{a}}
\LeftLabel{$\mathsf{\Box}$} 
\BIC{$p, F \Rightarrow \Box p$}
\LeftLabel{$\mathsf{}\to_R$}
\UIC{$ F \Rightarrow p\to\Box p$}
\LeftLabel{$\mathsf{\Box}$} 
\BIC{$ F \Rightarrow \Box(p\to \Box p),p$} 
\LeftLabel{$\mathsf{\to_L}$}
\BIC{$\Box(p \rightarrow \Box p) \rightarrow p, F \Rightarrow p$}
\LeftLabel{$\mathsf{refl}$}
\RightLabel{ ,} 
\UIC{$F \Rightarrow p $ \tikzmark{b}}
\DisplayProof
\begin{tikzpicture}[overlay,remember picture, >=latex,distance=4cm]
    \draw[->, thick] ({pic cs:a}) to [out=0,in=-20]({pic cs:b});
 \end{tikzpicture}
\end{gather*}
where $F=\Box(\Box(p \rightarrow \Box p) \rightarrow p) $. 

The notion of cyclic proof determines the same provability
relation as the notion of regular $\infty$-proof. Obviously, each cyclic proof can be
unravelled into a regular $\infty$-proof. The converse is also true.
\begin{prop}[cf. \cite{Sham}, Proposition 3.1]
Any regular $\infty$-proof of $\mathsf{Grz}_\infty$ can be obtained by unraveling a cyclic proof.
\end{prop}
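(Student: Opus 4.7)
The plan is to construct, from a given regular $\infty$-proof $\pi$ of $\mathsf{Grz}_\infty$, a cyclic proof $(\kappa, d)$ whose unravelling is isomorphic to $\pi$. Because $\pi$ is regular, only finitely many of its subtrees are pairwise non-isomorphic; fix a representative for each of these classes and let $N$ denote their number. The tree $\kappa$ will be obtained by a top-down traversal of $\pi$ in which a node is turned into a back-link leaf as soon as doing so is legal.

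Concretely, I would grow $\kappa$ node by node. For each frontier node $v$, inspect the root-to-$v$ path already built in $\kappa$ and search for a proper ancestor $u$ on that path such that (i) the $\pi$-subtrees rooted at $u$ and $v$ are isomorphic as labelled proof trees and (ii) the segment of the path from $u$ to $v$ contains a right premise of the rule $(\Box)$. If such a $u$ exists, declare $v$ a leaf of $\kappa$ and set $d(v) := u$. Otherwise, if the corresponding node of $\pi$ is an initial sequent, keep $v$ as a leaf of $\kappa$; in all remaining cases, extend $\kappa$ at $v$ by the inference rule used at $v$ in $\pi$ and continue.

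The main obstacle is to show that this process terminates, so that $\kappa$ is finite. If $\kappa$ had an infinite branch $b$, then $b$ would project to an infinite branch of $\pi$, and the $\infty$-proof condition would force $b$ to pass through right premises of $(\Box)$ infinitely often, at depths $n_1 < n_2 < \dotso$. Since each $\pi$-subtree rooted at these depths belongs to one of $N$ isomorphism classes, some class recurs infinitely often along $b$; choosing three occurrences of such a class at depths $n_i < n_j < n_k$ yields an ancestor at depth $n_i$ isomorphic to a descendant at depth $n_k$ with a right premise of $(\Box)$ (the node at depth $n_j$) strictly between them, so the construction would have sealed $b$ at depth $n_k$ — a contradiction. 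The remaining verifications, that $(\kappa, d)$ satisfies the cyclic-proof definition and that its unravelling is isomorphic to $\pi$, are then straightforward from the construction; the latter holds because every back-link was placed between $\pi$-isomorphic subtrees, so unfolding the back-links reconstructs $\pi$ layer by layer. The delicate point requiring care is that the pigeonhole must be applied to right premises of $(\Box)$ rather than to arbitrary nodes, so that both back-link conditions are simultaneously secured.
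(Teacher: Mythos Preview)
The paper does not supply its own proof of this proposition; it simply cites \cite{Sham}, Proposition 3.1. Your argument is the standard direct construction and is correct.

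Two minor remarks. First, your use of three repeated occurrences at right $(\Box)$-premises is a safe way to sidestep the ambiguity of whether ``between $a$ and $d(a)$'' includes the endpoints; with an inclusive reading two occurrences already suffice, but your choice does no harm. Second, the final claim that the unravelling of $(\kappa,d)$ is isomorphic to $\pi$ deserves one line of justification: since every back-link joins nodes whose $\pi$-subtrees are isomorphic, an easy induction on $n$ shows that the $n$-fragment of the unravelling coincides with the $n$-fragment of $\pi$, and hence the two $\infty$-proofs agree.
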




In the rest of the section we establish that any sequent provable in $\mathsf{Grz}_\infty$ has a cyclic proof.


\begin{lem}\label{contraction}
For any formula $A$, the rules
\begin{gather*}
\AXC{$\Gamma , A,A \Rightarrow  \Delta$}
\LeftLabel{$\mathsf{cl}_{A}$}
\UIC{$\Gamma ,A \Rightarrow  \Delta$}
\DisplayProof\qquad
\AXC{$\Gamma \Rightarrow A,A, \Delta$}
\LeftLabel{$\mathsf{cr}_{A}$}
\UIC{$\Gamma \Rightarrow A, \Delta$}
\DisplayProof
\end{gather*}
are admissible in $\mathsf{Grz}_{\infty}$.
\end{lem}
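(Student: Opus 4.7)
The plan is to reduce contraction to an application of cut followed by cut-elimination, thereby avoiding a direct induction on a possibly non-well-founded proof.

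For the left rule $\mathsf{cl}_A$, suppose $\pi$ is an $\infty$-proof in $\mathsf{Grz}_\infty$ of the premise $\Gamma, A, A \Rightarrow \Delta$. By Lemma \ref{AtoA} there is also an $\infty$-proof $\sigma$ of $\Gamma, A \Rightarrow A, \Delta$ in $\mathsf{Grz}_\infty \subseteq \mathsf{Grz}_\infty + \mathsf{cut}$. Joining $\sigma$ and $\pi$ by an application of $(\mathsf{cut})$ with cut formula $A$, we obtain an $\infty$-proof of $\Gamma, A \Rightarrow \Delta$ in $\mathsf{Grz}_\infty + \mathsf{cut}$. Now Theorem \ref{infcuttoinf} supplies a cut-free $\infty$-proof of the same sequent in $\mathsf{Grz}_\infty$, which witnesses admissibility of $\mathsf{cl}_A$. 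The argument for $\mathsf{cr}_A$ is symmetric: combine the premise $\Gamma \Rightarrow A, A, \Delta$ (in the role of the left premise of cut) with the proof of $\Gamma, A \Rightarrow A, \Delta$ supplied by Lemma \ref{AtoA} (in the role of the right premise of cut), then eliminate the resulting cut via Theorem \ref{infcuttoinf}.

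There is no substantive obstacle here because all the heavy lifting was done in earlier sections: the identity axiom is internalized for arbitrary formulas by Lemma \ref{AtoA}, and cut-elimination is already available by Theorem \ref{infcuttoinf}. Attempting instead a direct induction on local height would fail, since contraction on a compound formula $A$ is not absorbed by the last rule in an $\infty$-proof in any obvious structurally decreasing way. The detour through $\mathsf{Grz}_\infty + \mathsf{cut}$ sidesteps this entirely.
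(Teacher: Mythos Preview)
Your proposal is correct and follows essentially the same approach as the paper: both combine the identity proof from Lemma~\ref{AtoA} with the given premise via a cut on $A$, and then remove that cut. The only cosmetic difference is that the paper invokes the adequate $A$-removing mapping $\mathsf{re}_A$ from Lemma~\ref{reabadeq} directly (setting $\mathsf{cl}_A(\pi)=\mathsf{re}_A(\xi,\pi)$ and $\mathsf{cr}_A(\tau)=\mathsf{re}_A(\tau,\xi)$), whereas you appeal to the full cut-elimination Theorem~\ref{infcuttoinf}; since $\mathsf{re}_A$ is precisely the single-cut component inside that theorem, the two arguments are the same in substance.
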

\begin{proof}
Assume $\pi$ and $\tau$ are $\infty$-proofs of the sequents $\Gamma , A,A \Rightarrow  \Delta$ and $\Gamma \Rightarrow A,A, \Delta$ in the system $\mathsf{Grz}_{\infty}$.
Let $\xi$ be an $\infty$-proof of the sequent $\Gamma , A \Rightarrow  A,\Delta$ in $\mathsf{Grz}_{\infty}$, which exists by Lemma \ref{AtoA}.

The required $\infty$-proofs of the sequents $\Gamma , A \Rightarrow  \Delta$ and $\Gamma  \Rightarrow  A,\Delta$ are defined by setting $
\mathsf{cl}_{A}(\pi)=\mathsf{re}_A(\xi, \pi)$ and 
$
\mathsf{cr}_{A}(\tau) =\mathsf{re}_A(\tau,\xi)$, where $\mathsf{re}_A$ is an $A$-removing mapping from Lemma \ref{reabadeq}. Since the mapping $\mathsf{re}_A$ is adequate, the $\infty$-proofs $
\mathsf{cl}_{A}(\pi)$ and $\mathsf{cr}_{A}(\tau)$ do not contain applications the rule $(\mathsf{cut})$.
 
\end{proof}

Let $\mathcal{T}^\ast$ denote the set of all root-preserving mappings from the set of $\infty$-proofs of $\mathsf{Grz}_\infty$ to itself.

\begin{lem}\label{Comp T*}
The pair $(\mathcal T^\ast, l_1)$ is a non-empty spherically complete ultrametric space.
\end{lem}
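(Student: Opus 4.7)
The plan is to imitate almost verbatim the proofs of Proposition \ref{SphCom} and Lemma \ref{Comp T}, working in the subspace of cut-free $\infty$-proofs (those belonging to $\bigcap_{n\in\mathbb N}\mathcal P_n$) in place of the full space $\mathcal P$. First, I would dispatch non-emptiness by observing that the identity mapping on $\infty$-proofs of $\mathsf{Grz}_\infty$ is trivially root-preserving and non-expansive, so it lies in $\mathcal T^\ast$.

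For spherical completeness, I would first record a preliminary fact: the set of $\infty$-proofs of $\mathsf{Grz}_\infty$, equipped with the restriction of $d_{\mathcal P}$, is itself spherically complete. Indeed, its proof is the same as that of Proposition \ref{ComplP}, with the extra remark that if $(\pi_i)_{i\in\mathbb N}$ is a descending sequence with $\pi_i\sim_{n_i}\pi_{i+1}$ and $n_i\to\infty$, then the glued $\infty$-proof agrees with each $\pi_i$ on its $n_i$-fragment and thus inherits the absence of cut-rule applications at every finite depth.

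With this in hand, given a sequence $\mathsf a_0\sim_{n_0,k_0}\mathsf a_1\sim_{n_1,k_1}\dotso$ in $\mathcal T^\ast$ with $r_i=2^{-n_i-1}+2^{-n_i-k_i-1}$ non-increasing, I would define the candidate limit $\mathsf a$ pointwise, splitting into the same three cases as in Proposition \ref{SphCom}: (i) $\lim r_i=0$, where for each input $\pi$ I glue the descending sequence $(\mathsf a_i(\pi))$ into a single cut-free proof using the preliminary fact; (ii) $\lim r_i>0$ with $(n_i,k_i)$ eventually constant, in which case some $\mathsf a_j$ serves as $\mathsf a$; (iii) $n_i\to n$, $k_i\to+\infty$, where $\mathsf a(\pi):=\mathsf a_j(\pi)$ with $j=\min\{i\mid n_i=n\text{ and }\lvert\pi\rvert<k_i\}$. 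The verification that $\mathsf a$ is non-expansive in each case mirrors the corresponding subcase in Proposition \ref{SphCom} word for word. Root-preservation is immediate, since every $\mathsf a_i$ is root-preserving and $\mathsf a(\pi)\sim_1\mathsf a_i(\pi)$ for suitably large $i$ implies that $\mathsf a(\pi)$ has the same root sequent as $\mathsf a_i(\pi)$, which in turn coincides with the root sequent of $\pi$.

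The only point requiring additional care beyond the earlier proofs is that the constructed $\mathsf a$ must take values in the cut-free space; this is precisely what the preliminary fact above guarantees in case (i), and is automatic in cases (ii) and (iii) because $\mathsf a$ simply reuses outputs of some $\mathsf a_j\in\mathcal T^\ast$. I do not expect any serious obstacle, only a careful book-keeping to confirm that none of the three case constructions leaks outside the cut-free subspace.
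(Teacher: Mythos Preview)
Your proposal is correct and follows essentially the same route as the paper: the paper's proof simply says that spherical completeness is ``analogous to the proof of Proposition~\ref{SphCom}'' and that non-emptiness holds because the identity lies in $\mathcal T^\ast$, which is exactly what you spell out in detail. The only minor discrepancy is that the paper's definition of $\mathcal T^\ast$ omits the word ``non-expansive'' (unlike $\mathcal T$), so your verification of non-expansiveness of the limit may be superfluous depending on how one reads the definition---but this does not affect the argument.
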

\begin{proof}
The proof of spherical completeness of the space $(\mathcal T^\ast, l_1)$ is analogous to the proof of Proposition \ref{SphCom}.
The space is obviously non-empty, since the identity function lies in $\mathcal T^\ast$.
\end{proof}

An application of the modal rule $(\Box)$ 
\[\AXC{$\Gamma, \Box \Pi \Rightarrow A, \Delta$}
\AXC{$\Box \Pi \Rightarrow A$}
\LeftLabel{$\mathsf{\Box}$}
\RightLabel{ .}
\BIC{$\Gamma, \Box \Pi \Rightarrow \Box A, \Delta$}
\DisplayProof 
\]
is called \emph{slim} if the multiset $\Pi$ here is a set.  
An $\infty$-proof is called \emph{slim} if every application of the rule $(\Box)$ in it is slim.

\begin{lem}
If $\mathsf{Grz}_\infty\vdash \Gamma \Rightarrow \Delta$, then the sequent $\Gamma \Rightarrow \Delta$ has a slim $\infty$-proof in $\mathsf{Grz}_\infty$.
\end{lem}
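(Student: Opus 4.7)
The plan is to construct a root-preserving mapping $\mathsf{sl}$ sending every $\infty$-proof of $\mathsf{Grz}_\infty$ to a slim $\infty$-proof of the same sequent, obtained as a fixed point via the Prie\ss-Crampe theorem applied in the ultrametric space $(\mathcal{T}^\ast, l_1)$ supplied by Lemma \ref{Comp T*}.

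First I establish an auxiliary slim contraction lemma: for every formula $\Box B$, there is a non-expansive root-preserving mapping $\mathsf{scl}_{\Box B}$ that sends a slim $\infty$-proof of $\Gamma, \Box B, \Box B \Rightarrow \Delta$ to a slim $\infty$-proof of $\Gamma, \Box B \Rightarrow \Delta$. In contrast to the $\mathsf{re}_A$ constructions of Section \ref{SecCut}, this mapping can be defined directly by induction on the local height of the input, in the style of Lemmas \ref{strongweakening}, \ref{inversion}, and \ref{weakcontraction}, since its recursive calls never cross the right premise of a $(\Box)$ inference. The crucial observation is that at any $(\Box)$ application inside a slim proof the multiset $\Pi$ is already a set, so $\Box B$ occurs at most once among $\Box\Pi$; therefore the contraction of two $\Box B$'s in the conclusion can always be performed inside the left premise (to which the recursive call is applied) while the right premise $\Box\Pi \Rightarrow A'$ and the multiset $\Pi$ stay untouched, so the resulting $(\Box)$ application remains slim. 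All other rule cases are handled by recursing into the relevant premises and reapplying the rule. This slimness-preservation verification is the main technical obstacle.

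The slim transformation itself is then defined as the fixed point of a strictly contractive operator $\mathsf{S}: \mathcal{T}^\ast \to \mathcal{T}^\ast$. Given $\mathsf{u} \in \mathcal{T}^\ast$ and an $\infty$-proof $\pi$, set $\mathsf{S}(\mathsf{u})(\pi) = \pi$ when $\pi$ is a leaf; for the rules $(\to_L)$, $(\to_R)$, $(\mathsf{refl})$ and for a $(\Box)$ application whose multiset $\Pi$ is already a set, apply $\mathsf{u}$ to each premise and reapply the rule. For a $(\Box)$ application with duplicates in $\Pi$, let $\Pi^\ast$ denote the underlying set; apply $\mathsf{u}$ to both premises, then iteratively apply $\mathsf{scl}_{\Box B}$ to the resulting right premise (once for each surplus copy of each $B$ in $\Pi$) to obtain a slim proof of $\Box\Pi^\ast \Rightarrow A$, and finally combine with the left premise via a slim instance of $(\Box)$ whose conclusion, after absorbing the duplicate boxed formulas into the context, coincides with $\Gamma, \Box\Pi \Rightarrow \Box A, \Delta$. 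Strict contractivity of $\mathsf{S}$, namely $\mathsf{u} \sim_{n,k} \mathsf{v} \Rightarrow \mathsf{S}(\mathsf{u}) \sim_{n,k+1} \mathsf{S}(\mathsf{v})$, follows from the standard case analysis using non-expansiveness of $\mathsf{scl}_{\Box B}$; Theorem \ref{fixpoint} supplies the fixed point $\mathsf{sl}$, and an $(n,k)$-style induction mirroring the one at the end of Theorem \ref{infcuttoinf} shows that $\mathsf{sl}(\pi)$ is slim for every $\pi$.
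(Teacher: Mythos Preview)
Your argument has a circularity that breaks the fixed-point construction. Your auxiliary map $\mathsf{scl}_{\Box B}$ is, by your own description, only defined on \emph{slim} inputs: its $(\Box)$ case relies on $\Pi$ being a set so that at most one of the two tracked copies of $\Box B$ lies in $\Box\Pi$, allowing the contraction to be absorbed into the left premise while the right premise $\Box\Pi\Rightarrow A'$ stays untouched. On a non-slim input both copies may sit inside $\Box\Pi$; then no re-splitting of the antecedent yields a valid $(\Box)$-application with the contracted conclusion unless you also modify the right premise, contradicting your claim that recursive calls never cross that boundary.

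But in the definition of $\mathsf{S}(\mathsf{u})$ you apply $\mathsf{scl}_{\Box B}$ to $\mathsf{u}(\pi_2)$ for an \emph{arbitrary} $\mathsf{u}\in\mathcal{T}^\ast$. Such $\mathsf{u}$ need not return slim proofs (take $\mathsf{u}=\mathrm{id}$), so $\mathsf{scl}_{\Box B}(\mathsf{u}(\pi_2))$ is in general undefined and $\mathsf{S}$ is not an operator on $\mathcal{T}^\ast$ at all. The $(n,k)$-style induction you invoke at the end cannot help here: that induction runs only \emph{after} the fixed point exists, it cannot be used to make $\mathsf{S}$ total. Swapping the order and applying $\mathsf{scl}_{\Box B}$ to $\pi_2$ before $\mathsf{u}$ does not help either, since $\pi_2$ itself need not be slim.

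The paper avoids this trap by reversing the order with a contraction that is total: it first applies the general $\mathsf{cl}_{\Box B}$ of Lemma~\ref{contraction} (built from the $\mathsf{re}_A$ machinery and defined on \emph{all} cut-free $\infty$-proofs) to $\pi_2$, obtaining some $\pi_2'$ of $\Box\Pi^S\Rightarrow A$, and only then applies $\mathsf{u}$. Because that contraction is total, the operator is well defined on the whole space and the Prie\ss-Crampe argument goes through. If you wish to bypass $\mathsf{re}_A$, you need a left-contraction for $\Box B$ that works on arbitrary $\infty$-proofs; that forces the recursion to descend through right premises of $(\Box)$, so it cannot be done by induction on local height and is no longer the lightweight construction you describe.
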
 
\begin{proof}

We construct a mapping $\mathsf {slim}\in \mathsf T^\ast$ that maps an $\infty$-proof to a slim $\infty$-proof of the same sequent. This mapping is defined as the fixed-point of a contractive operator $\mathsf H\colon \mathcal T^\ast \to \mathcal T^\ast $. 

For a mapping $\mathsf u\in \mathcal T^\ast$ and an $\infty$-proof of $\mathsf{Grz}_\infty$, the $\infty$-proof $\mathsf H(\mathsf u)(\pi)$ is defined as follows.
If $\lvert \pi\rvert=0$, then we put $\mathsf H(\mathsf u)(\pi)=\pi$. 

Otherwise, we define $\mathsf H(\mathsf u)(\pi)$ according to the last application of an inference rule in $\pi$:
\begin{gather*}
\AXC{$\pi_1$}
\noLine
\UIC{$\Gamma , B \Rightarrow  \Delta$}
\AXC{$\pi_2$}
\noLine
\UIC{$\Gamma \Rightarrow  A, \Delta$}
\LeftLabel{$\mathsf{\rightarrow_L}$}
\BIC{$\Gamma , A \rightarrow B \Rightarrow  \Delta$}
\DisplayProof 
\longmapsto
\AXC{$\mathsf u(\pi_1)$}
\noLine
\UIC{$\Gamma , B \Rightarrow  \Delta$}
\AXC{$\mathsf u(\pi_2)$}
\noLine
\UIC{$\Gamma \Rightarrow  A, \Delta$}
\LeftLabel{$\mathsf{\rightarrow_L}$}
\RightLabel{ ,}
\BIC{$\Gamma , A \rightarrow B \Rightarrow  \Delta$}
\DisplayProof 
\end{gather*}
\begin{gather*}
\AXC{$\pi_0$}
\noLine
\UIC{$\Gamma, A \Rightarrow  B , \Delta$}
\LeftLabel{$\mathsf{\rightarrow_R}$}
\UIC{$\Gamma \Rightarrow  A \rightarrow B , \Delta$}
\DisplayProof 
\longmapsto
\AXC{$\mathsf u(\pi_0)$}
\noLine
\UIC{$\Gamma, A \Rightarrow  B , \Delta$}
\LeftLabel{$\mathsf{\rightarrow_R}$}
\RightLabel{ ,}
\UIC{$\Gamma \Rightarrow  A \rightarrow B , \Delta$}
\DisplayProof 
\end{gather*}
\begin{gather*}
\AXC{$\pi_0$}
\noLine
\UIC{$\Gamma, A, \Box A \Rightarrow   \Delta$}
\LeftLabel{$\mathsf{refl}$}
\UIC{$\Gamma , \Box A\Rightarrow  \Delta$}
\DisplayProof 
\longmapsto
\AXC{$\mathsf u(\pi_0)$}
\noLine
\UIC{$\Gamma, A, \Box A \Rightarrow   \Delta$}
\LeftLabel{$\mathsf{refl}$}
\RightLabel{ .}
\UIC{$\Gamma , \Box A\Rightarrow  \Delta$}
\DisplayProof 
\end{gather*}

For a multiset $\Pi$, we denote its underlying set by $\Pi^S$. The case of the modal rule is as follows:
\begin{gather*}
\AXC{$\pi_1$}
\noLine
\UIC{$\Gamma, \Box \Pi \Rightarrow A, \Delta$}
\AXC{$\pi_2$}
\noLine
\UIC{$\Box \Pi \Rightarrow A$}
\LeftLabel{$\mathsf{\Box}$}
\BIC{$\Gamma, \Box \Pi \Rightarrow \Box A, \Delta$}
\DisplayProof 
\longmapsto
\AXC{$\mathsf u(\pi_1)$}
\noLine
\UIC{$\Gamma, \Box \Pi \Rightarrow A, \Delta$}
\AXC{$\mathsf {u}(\pi^\prime_2)$}
\noLine
\UIC{$\Box \Pi^S \Rightarrow A$}
\LeftLabel{$\mathsf{\Box}$}
\RightLabel{ ,}
\BIC{$\Gamma, \Box \Pi \Rightarrow \Box A, \Delta$}
\DisplayProof 
\end{gather*}
where $\pi^\prime_2$ is an $\infty$-proof of the sequent $\Box \Pi^S \Rightarrow A$ in $\mathsf{Grz}_\infty$. This $\infty$-proof exists by the
admissibility of contraction rules obtained in Lemma \ref{contraction}.
 
Now it can be easily shown, that if $\mathsf u\sim_{n,k} \mathsf v$ for some $\mathsf u, \mathsf v\in \mathcal T^\ast$ and $n,k\in\mathbb N$, then $\mathsf {H(u)}\sim_{n,k+1}\mathsf{H(v)}$. Therefore the mapping $\mathsf H$ is a contractive operator on a spherically complete ultrametric space. Thus, it has a fixed-point, which we denote by $\mathsf {slim}$.

In analogous way to the proofs of Lemma \ref{reboxadeq} and Theorem \ref{infcuttoinf}, we see that $\mathsf {slim}(\pi)$ is a slim $\infty$-proof for any $\pi$.
\end{proof}


\begin{thm}
If $\mathsf{Grz}_\infty\vdash \Gamma \Rightarrow \Delta$, then there is a cyclic proof for the given sequent. 
\end{thm}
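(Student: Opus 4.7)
The plan is to convert an arbitrary $\infty$-proof of $\Gamma\Rightarrow\Delta$ in $\mathsf{Grz}_\infty$ into a regular $\infty$-proof, from which the preceding proposition immediately yields a cyclic proof. To this end, first apply the slim lemma just proved to obtain a slim $\infty$-proof $\pi$ of $\Gamma\Rightarrow\Delta$. Since every inference rule of $\mathsf{Grz}_\infty$ satisfies the subformula property, all formulas appearing anywhere in $\pi$ lie in $Sub(\Gamma\Rightarrow\Delta)$.

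The crucial observation is that in a slim $\infty$-proof every right premise of $(\Box)$ has the form $\Box\Pi^S\Rightarrow A$, where $\Pi^S$ is a \emph{set} (rather than an arbitrary multiset) of subformulas of $\Gamma\cup\Delta$ and $A\in Sub(\Gamma\Rightarrow\Delta)$. Consequently, only finitely many distinct sequents can occur at right premises of $(\Box)$ anywhere in $\pi$.

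Let $T_1,\dotsc,T_k$ enumerate these sequents. For each $T_i$, pick a subtree $\pi_i$ of $\pi$ whose root is a right premise of $(\Box)$ labeled by $T_i$; then $\pi_i$ is itself a slim $\infty$-proof of $T_i$. Define $\pi^\ast$ by taking the main fragment of $\pi$ and, at each of its leaves that is a right premise of $(\Box)$ labeled $T_i$, grafting the chosen $\pi_i$, proceeding recursively inside each grafted copy. By construction, every subtree of $\pi^\ast$ rooted at a right premise of $(\Box)$ is isomorphic to one of $\pi_1,\dotsc,\pi_k$, and every other subtree lies within the finite collection of subtrees of the main fragments of $\pi$ and of $\pi_1,\dotsc,\pi_k$. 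Hence $\pi^\ast$ has only finitely many nonisomorphic subtrees and is regular; the preceding proposition then produces a cyclic proof by unravelling.

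The main technical point I anticipate is verifying that $\pi^\ast$ is actually an $\infty$-proof, i.e., that every infinite branch passes through a right premise of $(\Box)$ infinitely often. This holds because each grafted $\pi_i$ is slim and its main fragment is a finite tree whose non-axiom leaves are themselves right premises of $(\Box)$, so any branch in $\pi^\ast$ must cross a fresh such premise after finitely many steps.
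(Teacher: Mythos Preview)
Your argument is correct but takes a different route from the paper's. Both begin with a slim $\infty$-proof $\pi$ and the observation that only finitely many sequents $T_1,\dotsc,T_k$ occur as right premises of $(\Box)$. From there the paper proceeds directly: it takes the $(k+2)$-fragment of $\pi$, notes by pigeonhole that every branch from the root to a non-axiom leaf must contain two right premises of $(\Box)$ marked by the same sequent, and inserts a back-link between them, yielding a cyclic proof outright. You instead build a regular $\infty$-proof by co-recursively grafting fixed subproofs at each $T_i$, and then invoke the proposition that every regular $\infty$-proof unravels from a cyclic one. Your construction is slightly more involved to set up (the co-recursive definition and the regularity check), whereas the paper's pigeonhole step is shorter and avoids appealing to the regular-to-cyclic proposition; on the other hand, your route makes explicit why regularity is the real content here.

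One notational slip worth cleaning up: the subtrees of $\pi^\ast$ rooted at right premises are not the original $\pi_i$ but the recursively processed trees you are actually grafting (call them $\pi_i^\ast$). Your regularity argument only needs that there are finitely many of these, which is true; just name them consistently. Similarly, the phrase ``lies within the finite collection of subtrees of the main fragments'' is imprecise, since those subtrees are finite while the subtrees of $\pi^\ast$ need not be; what you mean is that each subtree of $\pi^\ast$ is determined by a node in one of finitely many main fragments together with the fixed grafts, hence there are finitely many up to isomorphism.
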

\begin{proof}
Let $\pi$ be a slim $\infty$-proof of the sequent $\Gamma \Rightarrow \Delta$ obtained in the previous lemma. Note that all formulas from $\pi$ are subformulas of the formulas from $\Gamma \cup \Delta$. Consequently, the $\infty$-proof $\pi$ contains only finitely many different sequents that occur as right premises of the rule $(\Box)$ in it. By $k$, we denote the number of these sequents. 

Let $\xi$ be the $(k+2)$-fragment of the $\infty$-proof $\pi$. Consider any branch $a_0, a_1, \dotsc, a_n$ in $\xi$ connecting the root with a leaf $a_n$ that is not marked by an initial sequent. This branch containes a pair of different nodes $a$ and $b$ determining coinciding right premises of the rule $(\Box)$. Assuming that $b$ is further from the root of $\xi$ than $a$, we cut the branch under consideration at the node $b$ and connect $b$, which has become a leaf, with $a$ by a back-link. Applying
a similar operation to the remaining branches of $\xi$, we turn the $(k+2)$-fragment of $\pi$ into a cyclic proof of the sequent $\Gamma \Rightarrow \Delta$.  


\end{proof}


\paragraph*{Funding.} The article was prepared within the framework of the Basic
Research Program at the National Research University 
Higher School of Economics (HSE) and supported within the framework of a subsidy by the Russian Academic Excellence Project 
'5-100'. This work is also supported by the Russian Foundation for Basic Research, grant 15-01-09218a. 

\paragraph*{Acknowledgements.} The second author heartily thanks his beloved wife Maria Shamkanova for her warm and constant support. He is also indebted to the Lord God Almighty.

\end{document}